\newcommand{\Sym}{\mathrm{Sym}}
\newcommand{\Tr}{\mathrm{Tr}}
\newcommand{\tr}{\mathrm{tr}}
\newtheorem{definition}{Definition}
\newtheorem{theorem}{Theorem}
\newtheorem{lemma}{Lemma}
\newtheorem{proposition}{Proposition}
\numberwithin{equation}{section}
\numberwithin{theorem}{section}
\numberwithin{definition}{section}
\numberwithin{lemma}{section}
\numberwithin{proposition}{section}
\begin{document}
\title{The Kohnen plus space for Hilbert-Siegel modular forms}
\author{Ren-He Su}
\address{Graduate school of mathematics, Kyoto University, Kitashirakawa, Kyoto, 606-8502, Japan}
\email{ru-su@math.kyoto-u.ac.jp}
\maketitle

\begin{abstract}
The Kohnen plus space, roughly speaking, is a space consisting of modular forms of half integral weight with some property in Fourier coefficients.
For example, the $n$-th coefficient of a normal modular form of weight $k+1/2$ in the plus space is $0$ unless $(-1)^kn$ is congruent to some square modulo $4$.
The concept of plus space was initially introduced by Kohnen in 1980.
Eichler and Zagier showed that the plus space is isomorphic to the space of Jacobi forms in the one variable case.
Later, Ibukiyama generalized these results to the cases for Siegel modular forms in 1992.
Also, Hiraga and Ikeda generalized these results to the cases for Hilbert modular forms in 2013.
In this paper, we continue to consider the case of Hilbert-Siegel modular forms.
An analogue of the previous results will be given.

\end{abstract}

\section*{Notations}
For any complex number $z\in\mathbb{C},$ put $\mathbf{e}(z)=e^{2\pi\sqrt{-1}z}$.
When $R$ is a ring and $m$ is a positive integer, $M_m(R)$ is the set consisting of all $m\times m$ matrices with entries in $R$ and $\Sym_m(R)$ consists of symmetric matrices in $M_m(R)$.
If $F$ is a global field with ring of integers $\mathfrak{o},$ a half-integral symmetric matrix in $M_m(F)$ is a matrix consisting of entries in $\frac{1}{2}\mathfrak{o}$ and in particular diagonal entries in $\mathfrak{o}.$
For a positive integer $n$ and some arbitrary ordered $n$-tuple $\alpha$, when there is no special remark, $\alpha_i$ automatically stands for the $i$-th component of $\alpha$ for $1\leq i\leq n.$
If $A$ is an $n$-tuple of matrices in $M_m(\mathbb{R}),$ the notation $A\geq0$ means that $A_i$ is positive semi-definite for $1\leq i\leq n$.
Similarly, the notation $A>0$ means that $A_i$ is positive definite for $1\leq i\leq n$.

\section*{Acknowledgements}
The author would like to sincerely show his gratitude to Prof. Ikeda for his very kind and useful advisements.
This paper could not be completed without his instruction.
The author also would like to dedicate this paper, his second thesis, to his family, friends and Prof. Ikeda.

\section{Introduction}
\label{Introduction}
Before introducing the main results, let us define the Hilbert-Siegel Jacobi forms and the plus space for Hilbert-Siegel modular forms.
\par
Let $F\neq\mathbb{Q}$ be a totally real field of degree $n$ over $\mathbb{Q}$ with integer ring $\mathfrak{o}$ and different $\mathfrak{d}$ over $\mathbb{Q}$.
We denote the $n$ real embeddings of $F$ by $\iota_i$ for $1\leq i\leq n$.
For $\xi\in F,$ $\iota_i(\xi)$ will sometimes simply be denoted by $\xi_i$ or $\xi_{\infty_i}$.
An element $\xi\in F$ will be considered as a real $n$-tuple.
\par
Also, let us fix a positive integer $m>1$.
The Siegel upper half-space of genus $m$ is defined by
\[
\mathfrak{h}_m=\{X+\sqrt{-1}Y\in M_m(\mathbb{C})|X, Y\in \Sym_m(\mathbb{R}), Y>0\}
\]
where as usual, $\Sym_m$ is the set of symmetric $m\times m$ matrices and $Y>0$ means $Y$ is positive definite.
The set $\mathfrak{h}_m^n$ consists of all $n$-tuples whose components are in $\mathfrak{h}_m$.
Similarly, $(\mathbb{C}^m)^n$ consists of all $n$-tuples having column vectors in $\mathbb{C}^m$ as components.
Note that any vector in this paper will be considered as a column vector.
\par
For any ring $R,$ the symplectic group of size $2m$ is defined by
\[
Sp_m(R)=\left\{g\in M_{2m}(R)\bigg|g\begin{pmatrix}
0_m&-I_m \\ I_m&0_m
\end{pmatrix}{^t\!g}=\begin{pmatrix}
0_m&-I_m \\ I_m&0_m
\end{pmatrix}
\right\}.
\]
It is well-known that $Sp_m(\mathbb{R})$ acts on $\mathfrak{h}_m\times\mathbb{C}^m$ by
\[
g(z,w)=(gz,^t\!(cz+d)^{-1}w)=((az+b)(cz+d)^{-1},^t\!(cz+d)^{-1}w)
\]
for $(z,w)\in\mathfrak{h}_m\times\mathbb{C}^m$ and
\[
g=\begin{pmatrix}a&b\\c&d\end{pmatrix}\in Sp_m(R)\quad(a,b,c,d\in M_n(\mathbb{R})).
\]
In the same way, if we consider $Sp_m(F)$ as an subset of $Sp_m(\mathbb{R})^n,$ then $Sp_m(F)$ acts on $\mathfrak{h}_m^n\times(\mathbb{C}^m)^n$ componentwisely. 
\par
Two important congruence subgroups of $Sp_m(F)$ are $\Gamma=\Gamma_0(1)$ and $\Gamma_0(4)$.
Their definitions are
\begin{equation}\label{defofgamma}
\Gamma_0(1)=\left\{
\begin{pmatrix}a&b\\c&d\end{pmatrix}\in Sp_m(F)\bigg|
a,c\in M_m(\mathfrak{0}), b\in M_m(\mathfrak{d}^{-1}), c\in M_m(\mathfrak{d})
\right\}
\end{equation}
and
\begin{equation}\label{defofgamma4}
\Gamma_0(4)=\left\{
\begin{pmatrix}a&b\\c&d\end{pmatrix}\in Sp_m(F)\bigg|
a,c\in M_m(\mathfrak{0}), b\in M_m(\mathfrak{d}^{-1}), c\in M_m(4\mathfrak{d})
\right\},
\end{equation}
respectively.
\par
Now we define the Hilbert-Siegel Jacobi forms.

\begin{definition}
\label{defofjacobiforms}
Let $G(z,w)$ be a holomorphic function on $\mathfrak{h}_m^n\times(\mathbb{C}^m)^n$ and $k=(k_i)_{1\leq i\leq n}$ be an $n$-tuple of positive integer.
If $G$ satisfies the following two conditions:\\[-0.3cm]
\item[(1)]$G(z,w+zx+y)=\mathbf{e}(-\Tr(^t\!xzx+2^t\!xw))G(z,w)$ for any $x\in\mathfrak{o}^m,y\in(\mathfrak{d}^{-1})^m$
\item[(2)]$G(\gamma(z,w))=N(\det(cz+d))^k\mathbf{e}(\Tr(^t\!w(cz+d)^{-1}cw))G(z,w)\left(\gamma=\begin{pmatrix}a&b\\c&d\end{pmatrix}\in\Gamma\right),$
then G is called a Jacobi form of weight $k$ and index $1$ or simply of weight $k$.
Here any addition and multiplication and determinant of $n$-tuples are simply calculated componentwisely and for any complex $n$-tuple $\tau=(\tau_i)_{1\leq i\leq n},$ we put
\[
\Tr(\tau)=\sum_{i=1}^n\tau_i,\quad N(\tau)^k=\prod_{i=1}^n\tau_i^{k_i}.
\]
The space of all Jacobi forms of weight $k$ is denoted by $J_{k,1}.$
\end{definition}
Let $G$ be a Jacobi form of weight $k$.
We define
\[
(G|_{k,1}g)(z,w)=G(g(z,w))N(\det(cz+d))^{-k}\mathbf{e}(-\Tr(^t\!w(cz+d)^{-1}cw))
\]
for $g=\begin{pmatrix}a&b\\c&d\end{pmatrix}$.
It is not difficult to see that $G|_{k,1}g$ has a Fourier expansion
\[
G|_{k,1}g=\sum_{N,r}f_g(N,r)\mathbf{e}(\Tr(\tr(Nz)))\mathbf{e}(\Tr(^t\!rz))
\]
where $N$ and $r$ run over certain lattice in $\Sym_m(F)$ and $F^m$, respectively, and that $f_g(N,r)=0$ unless $\begin{pmatrix}N&r/2\\^t\!r/2&1\end{pmatrix}\geq0$ by K\"{o}cher's principle.
Here $\tr$ denotes the usual matrix trace.
In particular, we let $f(N,r)=f_{I_{2m}}(N,r)$.

\begin{definition}
With the above notations, if $G$ has the property that 
\[
f_g(N,r)=0\mbox{ unless }\begin{pmatrix}N&r/2\\^t\!r/2&1\end{pmatrix}>0
\]for any $g\in Sp_m(F)$, we call $G$ a Jacobi cusp form.
The subspace of $J_{k,1}$ consisting of all Jacobi cusp forms is denoted by $J_{k,1}^\mathrm{CUSP}.$
\end{definition}

\par
For any $\lambda\in\mathfrak{o}^m/(2\mathfrak{o})^m,$ the theta function $\theta_\lambda$ is a function on $\mathfrak{h}_m^n\times(\mathbb{C}^m)^n$ defined by
\begin{equation}
\label{defoftheta_lambda0}
\theta_\lambda(z,w)=\sum_{p\in\mathfrak{o}^m}
\mathbf{e}\left(\Tr\left(^t\!(p+\frac{\lambda}{2})z(p+\frac{\lambda}{2})+2\cdot^t\!(p+\frac{\lambda}{2})w)\right)\right).
\end{equation}
One easily see that $\theta_\lambda$ does not depend on the choice of $\lambda$.
There are $2^{nm}$ distinct such theta functions.
Now if $G$ is a Jacobi form of weight $k,$ it is well-known that for any $\lambda\in(\mathfrak{o}/2\mathfrak{o})^m$ there is a unique holomorphic function $G_\lambda$ on $\mathfrak{h}_m^n$ such that
\begin{equation}
\label{eq:G_lambda}
G(z,w)=\sum_{\lambda\in(\mathfrak{o}/2\mathfrak{o})^m}G_\lambda(z)\theta_\lambda(z,w).
\end{equation}
The above formula is called the theta expansion for $G$.
\par
Now consider the function $\theta(z)=\theta_0(z,0)$ on $\mathfrak{h}_m^n$.
It is actually a modular form with respect to $\Gamma_0(4)$ of weight $1/2$.
The factor of automorphy of half-integral weight is defined by
\[
\tilde{j}(\gamma,z)=\frac{\theta(\gamma z)}{\theta(z)}\mbox{ for }
\gamma\in\Gamma_0(4)\mbox{ and }z\in\mathfrak{h}_m^n.
\]
It is shown in \cite{Shimura:85} that
\[
\tilde{j}(\gamma,z)^4=N(\det(cz+d))^2\mbox{ if }\gamma=\begin{pmatrix}a&b\\c&d\end{pmatrix}\in\Gamma_0(4).
\]
\par
We are now ready to define the plus space for Hilbert-Siegel modular forms.
Let $k=(k_1,...,k_n)$ be an $n$-tuple of positive integers.
For simplicity, here we only consider the case that $k_1\equiv k_2\equiv\cdots\equiv k_n$ (mod $2$).
The general case will be considered later in Section \ref{Automorphic forms on Sp_m(A)}.
The $n$-tuple $k$ is called even if its entries are even, or odd if its entries are odd.
If $k$ is parallel, i.e, if $k_1=k_2=\cdots=k_n,$ without any confusion, we denote the components of $k$ also by $k$.
We let $M_{k+1/2}(\Gamma_0(4))$ be the space of Hilbert-Siegel modular forms of weight $k+1/2$ with respect to $\Gamma_0(4),$  that is, $M_{k+1/2}(\Gamma_0(4))$ is the complex linear space of holomorphic functions $h$ on $\mathfrak{h}_m^n$ such that
\[
h(\gamma z)=J^{k+1/2}(\gamma,z)h(z)
\]
for any $\gamma=\begin{pmatrix}a&b\\c&d\end{pmatrix}\in\Gamma_0(4)$ where
\begin{equation}\label{defoftheautomorphicfactorparallel}
J^{k+1/2}(\gamma,z)=
\begin{cases}
\tilde{j}(\gamma,z)\prod_{j=1}^n\det(c_j z_j+d_j)^{k_j}\quad&\mbox{if }k\mbox{ is even,}\\
\tilde{j}(\gamma,z)^3\prod_{j=1}^n\det(c_j z_j+d_j)^{k_j-1}\quad&\mbox{if }k\mbox{ is odd.}
\end{cases}
\end{equation}
\par
Again, by K\"{o}cher's principle, we do not need the cusp condition for the definition of a modular form under the restrictions of $F$ and $m$.
A modular form $h\in M_{k+1/2}(\Gamma_0(4))$ has a Fourier expansion $h(z)=\sum_{T}c(T)\mathbf{e}(\Tr(\tr(Tz)))$ where in the summation $T$ runs over all positive semi-definite half-integral symmetric matrices and $c(T)\neq0$ only if $T\geq0$.
We call $h$ a cusp form if $h^4$ is a normal cusp Hilbert-Siegel modular form of weight $4k+2$.
The subspace of $M_{k+1/2}(\Gamma_0(4))$ consisting of the cusp forms is denoted by $S_{k+1/2}(\Gamma_0(4))$.

\begin{definition}
With the notations above, the plus space $M_{k+1/2}^+(\Gamma_0(4))$ is the subspace of $M_{k+1/2}(\Gamma_0(4))$ defined by
\begin{align*}M_{k+1/2}^+(\Gamma_0(4))&=\bigg\{
h(z)\in M_{k+1/2}(\Gamma_0(4))
\,\bigg|\,\\
c(T)&=0\mbox{ unless }T\equiv(-1)^k\lambda\cdot^t\!\lambda\mbox{ mod }4L_m^*\mbox{ for some }\lambda\in\mathfrak{o}^m
\bigg\}\end{align*}
where $L_m^*$ is the set of all $m\times m$ symmetric half-integral matrices.
Also, we let 
\[
S_{k+1/2}^+(\Gamma_0(4))=M_{k+1/2}^+(\Gamma_0(4))\cap S_{k+1/2}(\Gamma_0(4)).
\]
The space $S_{k+1/2}^+(\Gamma_0(4))$ is also called a plus space.
\end{definition}
The space just been defined is an analogue of which was initially brought up by Kohnen in \cite{Kohnen:80}. Also, the Siegel case and Hilbert case were established by Ibukiyama in \cite{Ibuki:92} and Hiraga and Ikeda in \cite{HiraIke:13}, respectively.
This is the reason why we only consider the case $F\neq\mathbb{Q}$ and $m>1$ in this paper, though similar result and proof apply to the previous cases if we add some adjustment for the cusp condition in the definitions.
\par
In this paper, as in \cite{HiraIke:13}, we will construct a Hecke operator $E^K$ on $M_{k+1/2}(\Gamma_0(4))$ and $S_{k+1/2}(\Gamma_0(4))$ such that the fixed subspaces of $E^K$ are the plus spaces.
We set
\[
\Gamma'=\left\{
\begin{pmatrix}a&b\\c&d\end{pmatrix}\in Sp_m(F)\bigg|
a,c\in M_m(\mathfrak{0}), b\in M_m((4\mathfrak{d})^{-1}), c\in M_m(4\mathfrak{d})
\right\}.
\]
\par
Let $\mathbb{A}$ be the adele ring of $F$.
We set a character of $\psi$ of $\mathbb{A}/F$ such that for any archimedean place $v$ of $F,$ the local character $\psi_v$ is given by $x\rightarrow\mathbf{e}((-1)^kx)$ for $x\in\mathbb{R}$ where $(-1)^k$ is $1$ if $k$ is even and $-1$ if $k$ is odd.
Fixing a non-archimedean place $v$ of $F$, the completion of $F$ with respect to $v$ is denoted by $F_v$.
Let $\widetilde{Sp_m(F_v)}$ be the metaplectic double covering of $Sp_m(F_v)$.
Also, for any subset $S\subset Sp_m(F_v),$ we denote its inverse image in $\widetilde{Sp_m(F_v)}$ by $\widetilde{S}$.
Let $\omega_{\psi_v}$ be the Weil representation of $\widetilde{Sp_m(F_v)}$ on the Schwartz space $\mathbb{S}(F_v^m)$.
The inner product for any two functions $\Phi_1$ and $\Phi_2$ in $\mathbb{S}(F_v^m)$ is defined by
\[
(\Phi_1,\Phi_2)=\int_{F_v^m}\Phi_1(X)\overline{\Phi_2(X)}dX.
\]
Here the Haar measure $dX$ on $F_v^m$ is normalized so that $Vol(\mathfrak{o}_v^m)=1$.
Now denote the characteristic function of $\mathfrak{o}^m$ by $\Phi_{0,v}$.
The local Hecke operator $E^K_v$ is defined by
\[
E^K_v(g)=\begin{cases}
|2|_v^m(\Phi_{0,v},\omega_{\psi_v}(g)\Phi_{0,v})\quad&\mbox{ if }g\in\widetilde{\Gamma'_v}\\
0\quad&\mbox{ otherwise.}
\end{cases}
\]
Let $\mathbb{A}_f=\prod'_{v<\infty}F_v$ be the finite part of $\mathbb{A}$.
The global Hecke operator $E^K$ is a function on the metaplectic double covering $\widetilde{Sp_m(\mathbb{A}_f)}$ of $Sp_m(\mathbb{A}_f)$ defined by
\[
E^K=\prod_{v<\infty}E^K_v.
\]
Note that $\widetilde{Sp_m(\mathbb{A}_f)}$ acts on the space of all automorphic forms lifted from the Hilbert modular forms of weight $k+1/2$ by the right translation $\rho$.
This induces a representation of $\widetilde{Sp_m(\mathbb{A}_f)}$ on the space of all Hilbert-Siegel modular forms of weight $k+1/2,$ which is also denoted by $\rho$.
For a Hecke operator $\varphi$ on $\widetilde{Sp_m(\mathbb{A}_f)}$ with some properties, $\varphi$ acts on $M_{k+1/2}(\Gamma_0(4))$ by
\[
\rho(\varphi)h(z)=\int_{\widetilde{Sp_m(\mathbb{A}_f)}/\{\pm1\}}(\rho(g)h)(z)\varphi(g)dg
\]
for any $h\in M_{k+1/2}(\Gamma_0(4))$ where $\{\pm1\}$ is the kernel of the canonical mapping $\widetilde{Sp_m(\mathbb{A}_f)}\rightarrow Sp_m(\mathbb{A}_f)$ and $dg$ is some normalized Haar measure on $\widetilde{Sp_m(\mathbb{A}_f)}/\{\pm1\}$.
The spaces $M_{k+1/2}(\Gamma_0(4))$ and $S_{k+1/2}(\Gamma_0(4))$ are invariant under the Hecke operator $E^K$.
Letting $M_{k+1/2}(\Gamma_0(4))^{E^K}$ and $S_{k+1/2}(\Gamma_0(4))^{E^K}$ denote the corresponding fixed subspaces, our first main result states that they are the plus spaces we defined above.
\begin{theorem}
We have
\[
M_{k+1/2}(\Gamma_0(4))^{E^K}=M_{k+1/2}^+(\Gamma_0(4))
\]
and
\[
S_{k+1/2}(\Gamma_0(4))^{E^K}=S_{k+1/2}^+(\Gamma_0(4)).
\]
\end{theorem}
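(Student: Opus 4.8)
The plan is to translate the fixed-point equation $\rho(E^K)h=h$ into an explicit condition on the Fourier coefficients $c(T)$ of $h$ and then to recognize that condition as the defining congruence of the plus space. Because $E^K=\prod_{v<\infty}E^K_v$ factors as a product of local operators, I would first treat the places $v\nmid 2$. At such a place $4$ is a unit, so $(4\mathfrak{d})^{-1}=\mathfrak{d}^{-1}$ and $4\mathfrak{d}=\mathfrak{d}$ locally, whence $\Gamma'_v=\Gamma_0(4)_v$; the coset space is then trivial and $E^K_v$ reduces to the single diagonal matrix coefficient times $|2|_v^m=1$. Since $h$ is already invariant under $\Gamma_0(4)$ with the automorphy factor $J^{k+1/2}$, the corresponding integral acts, after normalizing the Haar measure, as the identity and imposes no condition. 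Thus the entire content of $E^K$ is concentrated at the dyadic places.

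Next I would compute $\rho(E^K_v)$ for $v\mid 2$ on a single coefficient. As $h$ is $\Gamma_0(4)$-invariant, the integral defining $\rho(E^K_v)$ collapses to a finite sum over $\widetilde{\Gamma'_v}/\widetilde{\Gamma_0(4)_v}$. Comparing \eqref{defofgamma4} with the definition of $\Gamma'$, the two groups differ only in the upper-right block, so this coset space is represented by the symplectic unipotents $u(b)=\begin{pmatrix}I_m&b\\0&I_m\end{pmatrix}$ with $b$ ranging over the symmetric matrices in $(4\mathfrak{d})^{-1}$ modulo those in $\mathfrak{d}^{-1}$, a finite set at $v$. The key local input is the Weil-representation formula $\omega_{\psi_v}(u(b))\Phi_{0,v}(x)=\psi_v({}^t\!xbx)\Phi_{0,v}(x)$, whose classical shadow is the translation $z\mapsto z+b$; the latter multiplies $c(T)$ by $\mathbf{e}(\Tr(\tr(Tb)))$. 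Assembling the two, the $T$-th coefficient of $\rho(E^K)h$ equals $c(T)$ times a Gauss sum of the shape $|2|_v^m\sum_b\overline{(\Phi_{0,v},\omega_{\psi_v}(u(b))\Phi_{0,v})}\,\mathbf{e}(\Tr(\tr(Tb)))$ attached to the form $x\mapsto{}^t\!xbx$ and the matrix $T$.

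The heart of the argument is the evaluation of this Gauss sum. I would show that, with the normalization $|2|_v^m$ and with $\psi$ chosen as in the text, the sum equals $1$ when $T\equiv(-1)^k\lambda\cdot{}^t\!\lambda\pmod{4L_m^*}$ for some $\lambda\in\mathfrak{o}^m$ and vanishes otherwise. Equivalently, $\rho(E^K)$ acts on the Fourier expansion by multiplying each $c(T)$ by $1$ or $0$ according to whether $T$ lies in an admissible congruence class. Granting this, $\rho(E^K)h=h$ holds precisely when $c(T)=0$ for every $T$ failing the congruence, which is exactly the definition of $M_{k+1/2}^+(\Gamma_0(4))$; this proves the first equality. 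The second follows at once, since $E^K$ is a Hecke operator and therefore preserves $S_{k+1/2}(\Gamma_0(4))$, giving $S_{k+1/2}(\Gamma_0(4))^{E^K}=M_{k+1/2}^+(\Gamma_0(4))\cap S_{k+1/2}(\Gamma_0(4))=S_{k+1/2}^+(\Gamma_0(4))$.

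I expect the dyadic Gauss-sum evaluation to be the main obstacle, for three reasons. One must track the metaplectic cocycle with care so that the half-integral factor $\tilde j$ and the normalization $|2|_v^m$ are correctly incorporated into the matrix coefficients. One must verify that the sum over $b$ is supported exactly on the square classes $(-1)^k\lambda\cdot{}^t\!\lambda$, which is where both the factor $4$ and the sign $(-1)^k$ must emerge; the sign in particular propagates from the archimedean components of $\psi$ to the dyadic ones through the product formula on $\mathbb{A}/F$. Finally, since $F\neq\mathbb{Q}$ may possess several primes above $2$ with distinct ramification, one must match the local square-class condition at each such place with the single global congruence modulo $4L_m^*$; I would handle this by a place-by-place analysis of the lattice $\mathfrak{o}_v^m$ under the form determined by $T$, together with local-global approximation for the lattice.
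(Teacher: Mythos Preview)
Your approach is correct in outline and genuinely different from the paper's. Once you reduce to $v\mid 2$ and use the coset decomposition $\Gamma'_v=\bigsqcup_b \mathbf{u}^\sharp(b)\,\Gamma_0(4)_v$ (valid because $c\in M_m(4\mathfrak{d}_v)$ forces $d\in GL_m(\mathfrak{o}_v)$ at dyadic places), the $T$-th coefficient of $\rho(E^K)h$ is $c(T)$ times a finite sum. Interchanging the sum over $b$ with the integral over $\mathfrak{o}_v^m$ defining the matrix coefficient and applying character orthogonality on $\Sym_m((4\mathfrak{d}_v)^{-1})/\Sym_m(\mathfrak{d}_v^{-1})$ cuts the $X$-integral down to the locus $\eta^{-1}T-X\,{}^t\!X\in 4L_m^*$; this is nonempty exactly when $\eta^{-1}T\equiv\square\pmod{4}$, and the idempotence of $E^K$ forces the nonzero value to be $1$, so the volume bookkeeping you flagged is not actually needed. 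The local-to-global matching you worry about is handled by $\mathfrak{o}/2\mathfrak{o}\cong\prod_{v\mid 2}\mathfrak{o}_v/2\mathfrak{o}_v$ together with the fact that $X\,{}^t\!X\bmod 4L_m^*$ depends only on $X\bmod 2$.

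The paper proceeds instead by a structural argument with no explicit Gauss sum. For $E^K$-fixed $\Rightarrow$ plus, it translates $h$ to an $e^K$-fixed vector $h_0$ and uses the irreducibility of $\Omega_\psi$ on $\mathbb{S}(2^{-1}\hat{\mathfrak{o}}^m/\hat{\mathfrak{o}}^m)$ to obtain an intertwining map $\Phi_\lambda\mapsto h_\lambda$; the $\mathbf{u}^\sharp$-action on the $h_\lambda$ then reads off the congruence on $c(T)$. For the converse, the paper's key Lemma~3.1 shows, by an induction along the $2$-adic filtration $\Gamma^{(i)}$, that the functions $h_\lambda$ built from the plus-space Fourier decomposition automatically span a copy of $\Omega_\psi$, whence $h_0$ is $e^K$-fixed. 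Your direct computation is shorter and more elementary for this theorem; the paper's representation-theoretic detour pays dividends later, since the very same intertwiner $h_\lambda\leftrightarrow\Phi_\lambda$ is what drives the isomorphism with Jacobi forms in Section~\ref{Relation to the Jacobi forms}.
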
  
\par
Let $h(z)=\sum_{T}c(T)\mathbf{e}(\Tr(\tr(Tz)))\in M_{k+1/2}^+(\Gamma_0(4)).$
For any $\lambda\in\mathfrak{o}^m/(2\mathfrak{o}^m),$ we set
\begin{equation}
\label{eq:h_lambda}
h_\lambda(z)=\sum_{\begin{smallmatrix}-T\equiv\lambda^t\!\lambda\\\mathrm{mod}\,4L_m^*\end{smallmatrix}}c(T)\mathbf{e}\left(\frac{\Tr(\tr(Tz))}{4}\right).
\end{equation}
By the definition of the plus space, we have
\[
h(z)=\sum_{\lambda\in(\mathfrak{o}/2\mathfrak{o})^m}h_\lambda(4z).
\]
\par
Now we restrict us to the that case $k$ is odd unless $mn$ is even.
Our second main result states that $h_\lambda$ and $G_\lambda$ we defined before can be used to construct a Jacobi form and a modular form in the plus space, respectively.
\begin{theorem}
Let $k$ be an $n$-tuple of positive integers which is odd if $mn$ is odd.
For $h\in M_{k+1/2}^+(\Gamma_0(4))$ and $G\in J_{k+1,1},$ letting $h_\lambda$ and $G_\lambda$ be as in (\ref{eq:h_lambda}) and (\ref{eq:G_lambda}), respectively, we have
\[
\sum_{\lambda\in\mathfrak{o}^m/(2\mathfrak{o}^m)}h_\lambda(z)\theta_\lambda(z,w)\in J_{k+1,1}
\]
and
\[
\sum_{\lambda\in\mathfrak{o}^m/(2\mathfrak{o}^m))^m}G_\lambda(4z)\in M_{k+1/2}^+(\Gamma_0(4))
\]
where $\theta_\lambda$ are the theta functions defined in (\ref{defoftheta_lambda0}).
The two canonical mappings are the inverse of each other.
Thus we have
\[
M_{k+1/2}^+(\Gamma_0(4))\cong J_{k+1,1}
\]
as linear spaces over $\mathbb{C}.$
Moreover, we have
\[
S_{k+1/2}^+(\Gamma_0(4))\cong J_{k+1,1}^\mathrm{CUSP}.
\]
\end{theorem}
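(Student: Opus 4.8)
The plan is to factor the desired isomorphism through the space of vector-valued modular forms attached to the Weil representation, exactly as in the classical Eichler--Zagier theory and its Hilbert analogue in \cite{HiraIke:13}. The theta expansion (\ref{eq:G_lambda}) already provides a canonical linear bijection $G \mapsto (G_\lambda)_\lambda$ between $J_{k+1,1}$ and tuples of holomorphic functions on $\mathfrak{h}_m^n$ transforming by a fixed unitary representation; the whole content of the theorem is to recognize the two explicit maps as the two legs of this factorization and to check that they invert one another. Concretely, I would first record the transformation behaviour of the theta functions $\theta_\lambda$ and then read off the complementary behaviour forced on the coefficient functions.

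First I would compute the transformation law of the vector $(\theta_\lambda)_\lambda$ under the two generators of the Jacobi group appearing in Definition \ref{defofjacobiforms}. Under the lattice translations $w \mapsto w + zx + y$ of condition (1), the sum defining $\theta_\lambda$ is reindexed by $p \mapsto p + x$, which permutes the classes $\lambda$ and introduces the phase $\mathbf{e}(-\Tr({}^t\!xzx + 2{}^t\!xw))$; under $\gamma \in \Gamma_0(4)$ the classical theta transformation formula produces the factor $\tilde{j}(\gamma,z)$ times a unitary matrix $M(\gamma)$ acting on the index $\lambda$, where $M$ is built from Gauss sums over $\mathfrak{o}^m/(2\mathfrak{o})^m$. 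Since $G=\sum_\lambda G_\lambda\theta_\lambda$ is Jacobi-invariant of weight $k+1$ and $(\theta_\lambda)$ carries the factor $\tilde{j}(\gamma,z)M(\gamma)$, the tuple $(G_\lambda)$ must transform with the automorphy factor $J^{k+1/2}(\gamma,z)$ twisted by the contragredient $M(\gamma)^{-1}$; that is, each family $(G_\lambda)$ is a vector-valued modular form of weight $k+1/2$ for $\Gamma_0(4)$.

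For the leg $h \mapsto \sum_\lambda h_\lambda(z)\theta_\lambda(z,w)$, I would verify directly that the plus-space congruence on the Fourier coefficients of $h$ is equivalent to the statement that $(h_\lambda)_\lambda$ transforms by the \emph{same} representation $M$ as $(G_\lambda)_\lambda$. Indeed, grouping the coefficients $c(T)$ by the class of $-T$ modulo $4L_m^*$ matches the support of the theta coefficients, and the congruence $T \equiv (-1)^k\lambda{}^t\!\lambda$ is exactly the condition under which $M(\gamma)$ acts correctly. Checking condition (1) for $\sum_\lambda h_\lambda\theta_\lambda$ is then the same reindexing computation as for the theta functions, and condition (2) follows by combining the weight-$(k+1/2)$ transformation of $(h_\lambda)$ with that of $(\theta_\lambda)$, whereupon $M$ and $M^{-1}$ cancel and leave the Jacobi factor of automorphy. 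For the opposite leg $G \mapsto \sum_\lambda G_\lambda(4z)$, I would expand each $G_\lambda(4z)$ in a Fourier series: the dilation by $4$ turns the support of $G_\lambda$ into matrices $T$ satisfying the plus-space congruence, so the sum lies in $M_{k+1/2}(\Gamma_0(4))$ with coefficients supported on the correct classes, hence in $M_{k+1/2}^+(\Gamma_0(4))$; modularity under $\Gamma_0(4)$ with factor $J^{k+1/2}$ is precisely the vector-valued law established above, read through the relation $h(z) = \sum_\lambda h_\lambda(4z)$.

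The two maps are mutually inverse by the uniqueness of the theta expansion (\ref{eq:G_lambda}) together with the uniqueness of Fourier coefficients: starting from $G$, decomposing $\sum_\lambda G_\lambda(4z)$ by congruence classes recovers $(G_\lambda)$, and conversely. The cusp statement then follows by matching the two positivity conditions: the Jacobi cusp condition $\begin{pmatrix} N & r/2 \\ {}^t\!r/2 & 1\end{pmatrix} > 0$ corresponds, after the dilation, to positive definiteness of the discriminant matrices of $h$, which is what the requirement that $h^4$ be cuspidal encodes. I expect the main obstacle to be the explicit theta transformation formula for $(\theta_\lambda)$ under $\Gamma_0(4)$ over the field $F$: identifying the unitary matrix $M(\gamma)$, reconciling its multiplier with $\tilde{j}(\gamma,z)$ and the metaplectic cocycle, and tracking the sign $(-1)^k$ together with the parity restriction ($k$ odd when $mn$ is odd) through the Gauss sums over $\mathfrak{o}^m/(2\mathfrak{o})^m$ in the presence of the different $\mathfrak{d}$. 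Once this representation is pinned down, everything else reduces to matching Fourier supports and cancelling the two mutually inverse multiplier systems.
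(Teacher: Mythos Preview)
Your plan is the classical vector-valued route, and it is different in spirit from what the paper does; but as written it has a genuine gap, and also a level confusion that matters.

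First the level issue. Jacobi forms here are automorphic for $\Gamma=\Gamma_0(1)$, not for $\Gamma_0(4)$: condition (2) in Definition~\ref{defofjacobiforms} ranges over $\gamma\in\Gamma$. Consequently the vector $(\theta_\lambda)_\lambda$ carries a unitary representation of (the cover of) $\Gamma_0(1)$, and $(G_\lambda)_\lambda$ is a vector-valued form for $\Gamma_0(1)$, not merely for $\Gamma_0(4)$. Your write-up consistently restricts to $\Gamma_0(4)$; that is not enough to conclude that $\sum_\lambda h_\lambda\theta_\lambda$ is a Jacobi form.

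Second, and more seriously, in the direction $h\mapsto\sum_\lambda h_\lambda\theta_\lambda$ you assert that the plus-space congruence on Fourier coefficients is ``equivalent'' to $(h_\lambda)_\lambda$ transforming by the same unitary representation $M$ that governs $(\theta_\lambda)_\lambda$. This is precisely the hard step, and your proposal does not supply it. From the definition of $M_{k+1/2}^+(\Gamma_0(4))$ you get exactly two pieces of information: the Fourier-support condition, which pins down the action of the unipotent radical $\mathbf{u}^\sharp(B)$ on each $h_\lambda$; and the $\Gamma_0(4)$-modularity of $h$, which after the dilation by $4$ says that the \emph{sum} $\sum_\lambda h_\lambda$ transforms by the character $\check{\varepsilon}$ of $\widetilde{\Gamma^{(e)}}$ (cf.\ (\ref{relabettwochars})). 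Neither of these gives you, a priori, the action of $\mathbf{u}^\flat$ (or of the Weyl element) on the individual $h_\lambda$, which is what you need to match $M(\gamma)$ on all of $\Gamma_0(1)$. Bridging from those two pieces of data to the full $\widetilde{\Gamma}$-module structure is exactly the content of the paper's key Lemma~\ref{mainlemma}: a local induction along the $2$-adic filtration $\Gamma^{(e)}\supset\cdots\supset\Gamma^{(0)}=\Gamma$ which manufactures the $\mathbf{u}^\flat$-action on $h_\lambda^{(e-i-1)}$ out of the $\mathbf{u}^\sharp$-action and the inductive hypothesis at level $e-i$. Your proposal treats this equivalence as a bookkeeping matter (``matching Fourier supports''), but it is the substantive part of the argument.

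By contrast, the paper never computes the theta transformation matrix $M(\gamma)$ explicitly. It works adelically: the span of the $\theta_\lambda$ realizes the Weil representation $\Omega_{\psi_1}$ of $\widetilde{\Gamma_f}$ (by construction of the Schr\"odinger--Weil representation), so the $\Gamma_f$-invariance of $G$ forces $(G_\lambda)$ to realize the contragredient $\overline{\Omega_{\psi_1}}=\Omega_\psi$; then the argument of Proposition~\ref{plustoE^K} gives $\sum_\lambda G_\lambda(4z)\in M_{k+1/2}^+(\Gamma_0(4))$. In the other direction, Lemma~\ref{mainlemma} is applied to the tuple $(h_\lambda)$ to identify it with $\Omega_\psi$, and then $\sum_\lambda h_\lambda\theta_\lambda$ is $\Gamma_f$-invariant simply because a representation tensored with its complex conjugate contains the trivial representation on the ``diagonal'' vector. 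The cusp statement follows because $G$ is cuspidal iff all $G_\lambda$ are. So your classical route is conceptually parallel but would still need an analogue of Lemma~\ref{mainlemma}; the obstacle you flag at the end (pinning down $M(\gamma)$ and its multiplier) is real, but it is not the only one.
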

The case $F=\mathbb{Q}$ and $m=1$ for the theorem was given by Eichler and Zagier in \cite{EichZag:85}.
The Siegel case and Hilbert case were treated by Ibukiyama in \cite{Ibuki:92} and \cite{HiraIke:13}, respectively.
\par
Let us briefly state the contents in the rest of the paper.
First, we will introduce the Weil representation and give an important lemma about it in Section \ref{Weil Representation} and \ref{A key lemma}.
Next, we define the idempotents Hecke operators $e^K$ and $E^K$ in Section \ref{The idempotents $e^K$ and $E^K$}.
And we state some very brief facts we need about the archimedean places in Section \ref{The archimedean case}.
Using the results in the previous sections, we construct the automorphic forms of half integral weight in Section \ref{Automorphic forms on Sp_m(A)}.
Finally, we define the Kohnen plus space and the Jacobi forms and prove our two main theorems in Section \ref{The Kohnen plus space} and \ref{Relation to the Jacobi forms}.

\section{Weil Representations}
\label{Weil Representation}
Hereafter throughout the whole paper, $m>1$ is a fixed positive integer.
Let $F$ be a local field with characteristic $0$.
If $F$ is archimedean, we assume $F=\mathbb{R}$.
If $F$ a finite extension over $\mathbb{Q}_p,$ we let $\mathfrak{o}$ and $\mathfrak{p}$ denote its integer ring and prime ideal, respectively.
Moreover, we let $q$ and $\varpi$ be the order of $\mathfrak{o}/\mathfrak{p}$ and a prime element, respectively.
Now fix a non-trivial additive character $\psi:F\rightarrow\mathbb{C}^\times$.
If $F=\mathbb{R}$, we set $\psi(x)=\mathbf{e}(x)$ or $\mathbf{e}(-x)$.
In the non-archimedean case, the index of $\psi,$ which we denote by $c_\psi$, is the largest integer $c$ such that $\psi(\mathfrak{p}^{-c})=1$.
Also, we fix an element $\boldsymbol\delta$ of order $c_\psi$ if $F$ is non-archimedean.
If $F=\mathbb{R},$ we let $\boldsymbol\delta=1$.
Furthermore, the Haar measure $dx$ of $F$ is the unique one such that $\mathfrak{o}$ has volume $1$ if $F$ is non-archimedean or the usual Lebesgue measure otherwise.
The Haar measure $dX$ of $F^m$ is simply defined to be $\prod_{i}dx_i$ where we write $X={^t\!(x_1,x_2,...,x_m)}$.
\par
Now we denote the metaplectic double covering of $Sp_m(F)$ by $\widetilde{Sp_m(F)},$ that is,
\[
\widetilde{Sp_m(F)}=\{[g,\epsilon]|g\in Sp_m(F), \epsilon\in\{\pm 1\}\}
\] 
equipped with the group multiplication
\[
[g_1,\epsilon_1][g_2,\epsilon_2]=[g_1g_2,\epsilon_1\epsilon_2\boldsymbol{c}(g_1,g_2)].
\]
Here $\boldsymbol{c}(g_1,g_2)$ is Rao's 2-cocycle as in \cite{Rao:93}.
If $\mathbf{g}$ is an element in $\widetilde{Sp_m(F)},$ we set $\epsilon(\mathbf{g})\in\{\pm 1\}$ to be the latter component of $\mathbf{g}$.
\par
Some notations for elements in $\widetilde{Sp_m(F)}$ should be given for simplicity.
For any $g\in\widetilde{Sp_m(F)},$ let $[g]=[g,1]$.
Also, we let
\begin{align*}
\mathbf{u}^\sharp(B)&=\left[\begin{pmatrix}I_m&B\\0_m&I_m\end{pmatrix}\right],&\mathbf{u}^\flat(B)=\left[\begin{pmatrix}I_m&0_m\\B&I_m\end{pmatrix}\right],\quad&\mbox{for } B\in \Sym_m(F),\\
 \mathbf{m}(A)&=\left[\begin{pmatrix}A&0_m\\0_m&^t\!A^{-1}\end{pmatrix}\right],&\mathbf{w}_A=\left[\begin{pmatrix}0_m&-{^t}\!A^{-1}\\A&0_m\end{pmatrix}\right],\quad&\mbox{for } A\in GL_m(F).
\end{align*}
If $L$ is any subset of $Sp_m(F),$ $\widetilde{L}$ denotes the inverse image of $L$ in $\widetilde{Sp_m(F)}$.
\par
Set $\mathbb{S}(F^m)$ to be the space of Schwartz functions on $F^m$.
For any $\Phi\in\mathbb{S}(F^m),$ the Fourier transform of $\Phi$ is defined by
\begin{equation}
\widehat{\Phi}(X)=|\boldsymbol\delta|^{m/2}\int_{F^m}\Phi(Y)\psi(^tYX)dY.
\end{equation}
Note that $|\boldsymbol\delta|^{m/2}dX$ is the self-dual Haar measure for the Fourier transformation.
\par
It is known that for any $a\in F^\times,$ there is a constant $\alpha_\psi(a)$ such that
\[
\int_F\phi(x)\psi(ax^2)dx=\alpha_\psi(a)|2a|^{-1/2}\int_F\hat{\phi}(x)\psi(-\frac{x^2}{4a})dx
\]
where $\phi$ is a Schwartz function on $F$ and $\hat{\phi}$ is its Fourier transform defined in the similar manner as above.
The constant $\alpha_\psi(a)$ is called the Weil constant or the Weil index.
It satisfies $\alpha_\psi(a)^8=1$ and does not depend on $\phi$.
One can easily see that $\alpha_\psi(ab^2)=\alpha_\psi(a)$ for any $b\in F^\times$ and $\alpha_\psi(-a)=\overline{\alpha_\psi(a)}$.
\par
We now introduce the Weil representation of $\widetilde{Sp_m(F)}$ on $\mathbb{S}(F^m)$.
Let $\Phi$ be any Schwartz function in $\mathbb{S}(F^m),$ the Weil representation $\omega_\psi$ with respect to $\psi$ is given by
\begin{align*}
\omega_\psi(\mathbf{u}^\sharp(B))\Phi(X)&=\psi(^t\!XBX)\Phi(X),\\
\omega_\psi(\mathbf{m}(A))\Phi(X)&=\frac{\alpha_\psi(1)}{\alpha_\psi(\det A)}|\det A|^{1/2}\Phi(^t\!AX),\\
\omega_\psi(\mathbf{w}_{I_m})\Phi(X)&=\alpha_\psi(1)^{-m}|2|^{m/2}\widehat{\Phi}(-2X),
\end{align*}
where $B\in\Sym_m(F)$ and $A\in GL_m(F)$.
From these we get that
\begin{align*}
\omega_\psi(\mathbf{w}_C)\Phi(X)&=\frac{\alpha_\psi(1)^{1-m}}{\alpha_\psi(\det C)}|\det 2C^{-1}|^{1/2}\epsilon(\mathbf{w}_{I_m}\mathbf{m}(C))\widehat{\Phi}(-2C^{-1}X),\\
\omega_\psi(\mathbf{u}^\flat(S))\Phi(X)&=\frac{\alpha_\psi(1)^{1-2m}}{\alpha_\psi((-1)^m)}|4\boldsymbol\delta|^{m/2}\epsilon(\mathbf{w}_{I_m}\mathbf{m}(-I_m))\epsilon(\mathbf{w}_{-I_m}\mathbf{u}^\sharp(-S)\mathbf{w}_{I_m})
\\&\times\int_{F^m}\widehat{\Phi}(-2Y)\psi(-{^t}YSY+2^tYX)dY,
\end{align*}
where $S\in\Sym_m(F)$ and $C\in GL_m(F)$.
For any $\Phi_1, \Phi_2\in\mathbb{S}(F^m),$ the inner product of $\Phi_1$ and $\Phi_2$ is
\[
(\Phi_1,\Phi_2)=\int_{F^m}\Phi_1(X)\overline{\Phi_2(X)}dX.
\]
The Weil representation is unitary with respect to the inner product.
\par
From now in this section we suppose that $F$ is non-archimedean.
We write $\mathfrak{d}=\mathfrak{p}^{c_\psi}.$
As in the introduction, in the local case, we also let
\begin{equation}\label{defofgammaloc}
\Gamma=\Gamma_0(1)=\left\{
\begin{pmatrix}a&b\\c&d\end{pmatrix}\in Sp_m(F)\bigg|
a,c\in M_m(\mathfrak{o}), b\in M_m(\mathfrak{d}^{-1}), c\in M_m(\mathfrak{d})
\right\}
\end{equation}
and
\begin{equation}\label{defofgamma4loc}
\Gamma_0(4)=\left\{
\begin{pmatrix}a&b\\c&d\end{pmatrix}\in Sp_m(F)\bigg|
a,c\in M_m(\mathfrak{o}), b\in M_m(\mathfrak{d}^{-1}), c\in M_m(4\mathfrak{d})
\right\}.
\end{equation}
\par
In general, for any two fractional ideals $\mathfrak{b}$ and $\mathfrak{c}$ of $F$ such that $\mathfrak{bc}\subset\mathfrak{o,}$ we put
\[
\Gamma[\mathfrak{b},\mathfrak{c}]=\left\{
\begin{pmatrix}a&b\\c&d\end{pmatrix}\in Sp_m(F)\bigg|
a,c\in M_m(\mathfrak{o}), b\in M_m(\mathfrak{b}), c\in M_m(\mathfrak{c})
\right\}.
\]
\par
The following lemma is a well-known fact.

\begin{lemma}\label{generofgamma}
The compact open subgroup $\Gamma[\mathfrak{b},\mathfrak{c}]$ defined above is generated by the elements
$\begin{pmatrix}
A&0\\0&^t\!A^{-1}
\end{pmatrix},$
$\begin{pmatrix}
I_m&B\\0&I_m
\end{pmatrix}$
and
$\begin{pmatrix}
I_m&0\\C&I_m
\end{pmatrix}$
where $A\in GL_m(\mathfrak{o}),$ $B\in\Sym_m(\mathfrak{b})$ and $C\in \Sym_m(\mathfrak{c}).$
\end{lemma}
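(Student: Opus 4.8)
The plan is to prove the two inclusions separately, the reverse one being the substance. Throughout, recall that $\mathfrak{o}$ is a discrete valuation ring, hence a principal ideal domain, so we may write $\mathfrak{b}=\mathfrak{p}^s$ and $\mathfrak{c}=\mathfrak{p}^t$ for integers $s,t$ with $s+t\geq0$ (this is exactly the hypothesis $\mathfrak{bc}\subseteq\mathfrak{o}$), and every matrix over $\mathfrak{o}$ admits a Smith normal form under left and right multiplication by $GL_m(\mathfrak{o})$. For brevity write
\[
d(A)=\begin{pmatrix}A&0\\0&{}^t\!A^{-1}\end{pmatrix},\qquad
n^+(B)=\begin{pmatrix}I_m&B\\0&I_m\end{pmatrix},\qquad
n^-(C)=\begin{pmatrix}I_m&0\\C&I_m\end{pmatrix}
\]
for the three families of proposed generators. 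First I would dispose of the easy inclusion: for $A\in GL_m(\mathfrak{o})$ one has ${}^t\!A^{-1}\in GL_m(\mathfrak{o})$, and $\Sym_m(\mathfrak{b})\subseteq M_m(\mathfrak{b})$, $\Sym_m(\mathfrak{c})\subseteq M_m(\mathfrak{c})$, so each generator lies in $\Gamma[\mathfrak{b},\mathfrak{c}]$; hence the subgroup they generate is contained in $\Gamma[\mathfrak{b},\mathfrak{c}]$.

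For the reverse inclusion, fix $g=\begin{pmatrix}a&b\\c&d\end{pmatrix}\in\Gamma[\mathfrak{b},\mathfrak{c}]$ and reduce it to the identity by left and right multiplication by the generators. The relevant elementary actions on the left block column $\binom{a}{c}$ are that $d(A)$, $n^+(B)$, $n^-(C)$ send $(a,c)$ to $(Aa,{}^t\!A^{-1}c)$, $(a+Bc,c)$, $(a,Ca+c)$ respectively, with analogous formulas on the right; and the symplectic relations ${}^t\!ac={}^t\!ca$ and ${}^t\!ad-{}^t\!cb=I_m$ are available throughout. The core step is to clear the lower-left block $c$. I would first use the $d(A)$'s (which cost nothing, as $GL_m(\mathfrak{o})$ is among the generators) to bring the relevant blocks into Smith-type form, and then clear $c$ against $a$ by combining $n^-$ and $n^+$ operations, reducing one row–column at a time exactly as in the rank-one ($SL_2$) case; an induction on $m$ organizes this. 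At each stage one must check that the symmetric matrix used in an $n^-$ factor has entries in $\mathfrak{c}$ and the one used in an $n^+$ factor has entries in $\mathfrak{b}$, so that the factor is a genuine generator; this is exactly where the integrality of $a,d$, the memberships $b\in M_m(\mathfrak{b})$, $c\in M_m(\mathfrak{c})$, and the product condition $\mathfrak{bc}\subseteq\mathfrak{o}$ conspire to produce divisibility in the correct ideal.

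Once $c$ has been reduced to $0$, the resulting element is block upper triangular, say $\begin{pmatrix}a_0&b_0\\0&{}^t\!a_0^{-1}\end{pmatrix}$, where the relation ${}^t\!a_0 d_0=I_m$ has forced the lower-right block to be ${}^t\!a_0^{-1}$. Integrality of both diagonal blocks forces $\det a_0\in\mathfrak{o}^\times$, hence $a_0\in GL_m(\mathfrak{o})$; the remaining symplectic relation makes $a_0^{-1}b_0$ symmetric, and $b_0\in M_m(\mathfrak{b})$ together with $a_0^{-1}\in GL_m(\mathfrak{o})$ gives $a_0^{-1}b_0\in\Sym_m(\mathfrak{b})$. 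Therefore this element equals $d(a_0)\,n^+(a_0^{-1}b_0)$, a product of generators, and unwinding the reduction expresses the original $g$ as a word in the three families.

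I expect the bookkeeping in the clearing of $c$ to be the main obstacle. The reduction itself is the familiar elementary-divisor argument, but the point of the lemma is the sharper statement that it can be performed without ever leaving $\Gamma[\mathfrak{b},\mathfrak{c}]$, i.e.\ that each elementary operation is realized by a generator whose off-diagonal block lies in the prescribed fractional ideal. Controlling these ideal memberships at every step, rather than merely reducing inside $Sp_m(F)$, is the crux, and it rests essentially on the condition $\mathfrak{bc}\subseteq\mathfrak{o}$ together with the discrete-valuation structure of $\mathfrak{o}$.
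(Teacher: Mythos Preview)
The paper does not actually supply a proof of this lemma: it is introduced with the sentence ``The following lemma is a well-known fact'' and left at that. So there is no argument in the paper to compare your proposal against.

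As for your sketch, the strategy is the standard one and is sound in outline: reduce an arbitrary $g\in\Gamma[\mathfrak{b},\mathfrak{c}]$ to block upper-triangular form by clearing the lower-left block $c$ via left/right multiplication by $d(A)$, $n^+(B)$, $n^-(C)$, then decompose the resulting parabolic element as $d(a_0)\,n^+(a_0^{-1}b_0)$. You correctly identify the only nontrivial point, namely that each elementary step must be effected by a \emph{symmetric} matrix with entries in the prescribed fractional ideal, and that the hypothesis $\mathfrak{bc}\subseteq\mathfrak{o}$ (equivalently $s+t\geq0$) is what guarantees this. One small warning for when you fill in the details: left multiplication by $n^-(C)$ replaces $c$ by $c+Ca$, not by an arbitrary row operation, so the clearing of $c$ is not quite the naive Smith-normal-form reduction; you must interleave $n^-$ and $n^+$ moves (or, equivalently, use that over a local ring either $a$ or the pair $(b,c)$ has a unit entry by the determinant condition) to make the induction on $m$ go through. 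This is routine but worth writing out carefully, since the symmetry constraint on $B$ and $C$ is genuinely restrictive.
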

\par
Let $\Phi_0\in\mathbb{S}(F^m)$ be the characteristic function of $\mathfrak{o}^m$.
Using Lemma \ref{generofgamma}, after some calculation, we get the following lemma.

\begin{lemma}\label{defofthechar}
The restriction of $\omega_\psi$ to $\widetilde{\Gamma_0(4)}$ defines a genuine character $\varepsilon:\widetilde{\Gamma_0(4)}\rightarrow\mathbb{C}^\times$ by
\[
\omega_\psi(\mathbf{\gamma})\Phi_0=\varepsilon(\gamma)^{-1}\Phi_0\quad(\gamma\in\widetilde{\Gamma_0(4)}).
\]
\end{lemma}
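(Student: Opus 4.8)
The plan is to reduce to the generators supplied by Lemma~\ref{generofgamma} and then let the homomorphism property of $\omega_\psi$ do the rest. Writing $\Gamma_0(4)=\Gamma[\mathfrak{d}^{-1},4\mathfrak{d}]$, that lemma shows $\Gamma_0(4)$ is generated by the matrices underlying $\mathbf{m}(A)$ with $A\in GL_m(\mathfrak{o})$, $\mathbf{u}^\sharp(B)$ with $B\in\Sym_m(\mathfrak{d}^{-1})$, and $\mathbf{u}^\flat(C)$ with $C\in\Sym_m(4\mathfrak{d})$; adjoining the central element $[I_m,-1]$ we obtain generators of the whole cover $\widetilde{\Gamma_0(4)}$. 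It therefore suffices to prove that $\omega_\psi$ sends $\Phi_0$ to a nonzero scalar multiple of itself on each generator. Granting this, multiplicativity is automatic, since $\omega_\psi(\gamma_1\gamma_2)\Phi_0=\omega_\psi(\gamma_1)\omega_\psi(\gamma_2)\Phi_0$ forces the eigenvalues to multiply, so $\gamma\mapsto\varepsilon(\gamma)$ is a character; and genuineness, $\varepsilon([I_m,-1])=-1$, follows at once from $\omega_\psi([I_m,-1])=-\mathrm{id}$.

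The two upper-triangular families are immediate from the explicit formulas. For $\mathbf{u}^\sharp(B)$ one has $\omega_\psi(\mathbf{u}^\sharp(B))\Phi_0(X)=\psi({}^t\!XBX)\Phi_0(X)$, and since $B\in\Sym_m(\mathfrak{d}^{-1})$ forces ${}^t\!XBX\in\mathfrak{d}^{-1}$ for $X\in\mathfrak{o}^m$, the exponential is trivial and the eigenvalue is $1$. For $\mathbf{m}(A)$ with $A\in GL_m(\mathfrak{o})$ we have $|\det A|=1$ and ${}^t\!AX$ runs through $\mathfrak{o}^m$ precisely when $X$ does, so $\omega_\psi(\mathbf{m}(A))\Phi_0=\frac{\alpha_\psi(1)}{\alpha_\psi(\det A)}\Phi_0$.

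The real work is the lower unipotent $\mathbf{u}^\flat(C)$, whose action is an integral transform. I would begin by computing $\widehat{\Phi_0}$, which is $|\boldsymbol\delta|^{m/2}$ times the characteristic function of $(\mathfrak{d}^{-1})^m$; hence $\widehat{\Phi_0}(-2Y)$ is supported on $Y\in(\tfrac12\mathfrak{d}^{-1})^m$. The decisive observation---and the reason the level must be $4$ rather than $1$---is that on this support and for $C\in\Sym_m(4\mathfrak{d})$ one gets ${}^t\!YCY\in\tfrac14\mathfrak{d}^{-2}\cdot4\mathfrak{d}=\mathfrak{d}^{-1}$, so the quadratic factor $\psi(-{}^t\!YCY)$ is identically $1$ on the region of integration. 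The $4$ is exactly what is needed to absorb the $\tfrac12$ coming from the $-2Y$ together with the dyadic contribution of $|2|$ when the residue characteristic is $2$. The integral then reduces to $\int\psi(2\,{}^t\!YX)\,dY$ over $(\tfrac12\mathfrak{d}^{-1})^m$, which by orthogonality of characters equals the volume of that set when $X\in\mathfrak{o}^m$ and vanishes otherwise, so the output is once more a nonzero multiple of $\Phi_0$.

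What remains is purely bookkeeping: collecting $\frac{\alpha_\psi(1)^{1-2m}}{\alpha_\psi((-1)^m)}$, the powers of $|\boldsymbol\delta|$ and $|2|$, the volume above, and the two $\epsilon(\cdot)$ signs from Rao's cocycle into a single nonzero constant. I expect no difficulty here and would keep these factors symbolic, since the statement needs only that the eigenvalue is a well-defined nonzero complex number. Thus the genuine obstacle is the $\mathbf{u}^\flat$ integral, and in particular recognizing that $C\in\Sym_m(4\mathfrak{d})$ is precisely the condition making the quadratic exponential disappear on the support of $\widehat{\Phi_0}(-2\,\cdot\,)$.
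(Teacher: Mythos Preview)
Your argument is correct and follows exactly the route the paper indicates: the paper's own proof consists solely of the sentence ``Using Lemma~\ref{generofgamma}, after some calculation, we get the following lemma,'' and you have supplied precisely that calculation on the generators $\mathbf{m}(A)$, $\mathbf{u}^\sharp(B)$, $\mathbf{u}^\flat(C)$ together with the central element. Your treatment of the $\mathbf{u}^\flat$ case---identifying the support of $\widehat{\Phi_0}(-2\,\cdot\,)$ as $(\tfrac12\mathfrak{d}^{-1})^m$ and observing that $C\in\Sym_m(4\mathfrak{d})$ kills the quadratic term there---is the substantive step, and it is carried out correctly.
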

\par
Let $e$ be the order of $2,$ that is, be the non-negative integer such that $|2|=q^{-e}$.
For $0\leq 1\leq e,$ set
\begin{align*}
\mathbb{S}^{(i)}&=\mathbb{S}((\mathfrak{p}^{-e})^m/(\mathfrak{p}^{-i})^m)\\
&=\{
f\in\mathbb{S}(F^m)\,\bigg|\,Supp(f)\subset(\mathfrak{p}^{-e})^m, f(X+Y)=f(X)\mbox{ for any }Y\in(\mathfrak{p}^{-i})^m
\}
\end{align*}
and
\[
\Gamma^{(i)}=\Gamma[\boldsymbol\delta^{-1}\mathfrak{p}^{2i},\boldsymbol\delta\mathfrak{o}].
\]
Hence we have $\Gamma^{(0)}=\Gamma_0(1)$.
For $0\leq i\leq e$ and any $\lambda\in\mathfrak{o}^m,$ we set $\Phi_\lambda^{(i)}\in\mathbb{S}(F^m)$ to be the characteristic function of $\lambda/2+(\mathfrak{p}^{-i})^m$.
Then
\[
\mathbb{S}^{(i)}=\bigoplus_{\lambda\in\mathfrak{o}^m/(\mathfrak{p}^{e-i})^m}\mathbb{C}\cdot\Phi_\lambda^{(i)}.
\]
So $\dim_\mathbb{C}\mathbb{S}^{(i)}=q^{m(e-i)}$.
It is worth mentioning that the Fourier transformation of $\Phi_\lambda^{(i)}$ is
\begin{equation}
\label{Fourtransofphilambda}
\widehat{\Phi_\lambda^{(i)}}(X)=|\boldsymbol\delta\varpi^{-2i}|^{m/2}\psi\left(\frac{^t\!X\lambda}{2}\right)\Phi_0(\boldsymbol\delta\varpi^{-i}X).
\end{equation}

\begin{proposition}
\label{invofOmega}
We restrict the Weil representation $\omega_\psi$ to $\widetilde{\Gamma^{(i)}}$ and denote this restricted representation by $\Omega_\psi^{(i)}$.
Then $\mathbb{S}^{(i)}$ is invariant with respect to $\Omega_\psi^{(i)}$.
\end{proposition}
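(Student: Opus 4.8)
The plan is to reduce the statement to a check on the generators of $\Gamma^{(i)}=\Gamma[\boldsymbol\delta^{-1}\mathfrak{p}^{2i},\boldsymbol\delta\mathfrak{o}]$ provided by Lemma \ref{generofgamma}. The cover $\widetilde{\Gamma^{(i)}}$ is generated by the central kernel $\{[I_{2m},\pm1]\}$ together with lifts of $\mathbf{m}(A)$ with $A\in GL_m(\mathfrak{o})$, of $\mathbf{u}^\sharp(B)$ with $B\in\Sym_m(\boldsymbol\delta^{-1}\mathfrak{p}^{2i})$, and of $\mathbf{u}^\flat(C)$ with $C\in\Sym_m(\boldsymbol\delta\mathfrak{o})=\Sym_m(\mathfrak{d})$. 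Since the kernel acts by $\pm1$ and hence preserves every subspace, it suffices to show that $\omega_\psi$ of each of these three families carries $\mathbb{S}^{(i)}$ into itself. Throughout I will use that a function lies in $\mathbb{S}^{(i)}$ exactly when it is supported in $(\mathfrak{p}^{-e})^m$ and invariant under translation by $(\mathfrak{p}^{-i})^m$, and I will verify these two conditions separately for each generator by counting valuations against the conductor $\mathfrak{d}^{-1}=\mathfrak{p}^{-c_\psi}$ of $\psi$.

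The two ``upper'' generators are immediate from the explicit formulas. For $\mathbf{m}(A)$ the action is $\Phi\mapsto(\mathrm{const})\,\Phi({}^t\!AX)$; as ${}^t\!A\in GL_m(\mathfrak{o})$ stabilises every lattice $(\mathfrak{p}^{-j})^m$, the substitution preserves both the support in $(\mathfrak{p}^{-e})^m$ and the $(\mathfrak{p}^{-i})^m$-periodicity, and the scalar is harmless. For $\mathbf{u}^\sharp(B)$ the action is multiplication by $\psi({}^t\!XBX)$, which does not enlarge the support; for the periodicity I would expand ${}^t(X+Y)B(X+Y)$ with $X\in(\mathfrak{p}^{-e})^m$ and $Y\in(\mathfrak{p}^{-i})^m$ and check that the cross term $2\,{}^t\!XBY$ and the term ${}^t\!YBY$ both lie in $\mathfrak{d}^{-1}$, using $v(B)\ge 2i-c_\psi$ and $v(2)=e\ge i$, so that $\psi$ kills them.

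The substantive case is $\mathbf{u}^\flat(C)$ with $C\in\Sym_m(\mathfrak{d})$, governed by the Fourier-type integral formula for $\omega_\psi(\mathbf{u}^\flat(S))$. Rather than evaluating the Gaussian integral outright, I would read off the two conditions from the integrand $h(Y)=\widehat{\Phi}(-2Y)\,\psi(-{}^t\!YCY)$ and the kernel $\psi(2\,{}^t\!YX)$. By (\ref{Fourtransofphilambda}) the function $\widehat{\Phi}(-2Y)$ attached to an element of $\mathbb{S}^{(i)}$ is supported in $(\mathfrak{p}^{\,i-e-c_\psi})^m$; since for $Z\in(\mathfrak{p}^{-i})^m$ and $Y$ in this support one has $2\,{}^t\!YZ\in\mathfrak{d}^{-1}$, the factor $\psi(2\,{}^t\!Y(X+Z))$ agrees with $\psi(2\,{}^t\!YX)$ on the support of $h$, which yields the $(\mathfrak{p}^{-i})^m$-periodicity of the output. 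For the support of the output, I note that $h$ admits $(\mathfrak{d}^{-1})^m$ as a period: the linear character appearing in $\widehat{\Phi}(-2Y)$ via (\ref{Fourtransofphilambda}) has period $(\mathfrak{d}^{-1})^m$, and the hypothesis $C\in\Sym_m(\mathfrak{d})$ is exactly what forces $\psi(-{}^t\!YCY)$ to be $(\mathfrak{d}^{-1})^m$-periodic on that support, the increments $2\,{}^t\!YCW$ and ${}^t\!WCW$ for $W\in(\mathfrak{d}^{-1})^m$ both landing in $\mathfrak{d}^{-1}$. Integrating the $(\mathfrak{d}^{-1})^m$-periodic function $h$ against $\psi(2\,{}^t\!YX)$ then gives a character sum that vanishes unless $2X$ pairs trivially with $(\mathfrak{d}^{-1})^m$, i.e. unless $X\in(\mathfrak{p}^{-e})^m$, which is the required support.

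I expect the only real work to be in this last case: keeping track of the Weil-index scalars $\alpha_\psi$ and of the precise lattice over which the Gaussian integral is taken, and confirming that $(\mathfrak{d}^{-1})^m$ really is a period of $h$, since it is the matching of this period with the output support $(\mathfrak{p}^{-e})^m$ that makes the two defining conditions of $\mathbb{S}^{(i)}$ close up. By contrast the contributions of $\mathbf{m}(A)$ and $\mathbf{u}^\sharp(B)$ reduce to routine valuation checks, and invertibility of the group elements upgrades the resulting inclusions into the claimed invariance.
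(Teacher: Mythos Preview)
Your argument is correct and follows the paper's overall plan: reduce to the generators of $\Gamma^{(i)}$ via Lemma~\ref{generofgamma}, dispose of $\mathbf{m}(A)$ and $\mathbf{u}^\sharp(B)$ by direct inspection, and isolate $\mathbf{u}^\flat(C)$ with $C\in\Sym_m(\mathfrak{d})$ as the only nontrivial case. Your valuation bookkeeping for this case checks out: the support of $\widehat{\Phi}(-2Y)$ in $(\mathfrak{p}^{\,i-e-c_\psi})^m$ gives the $(\mathfrak{p}^{-i})^m$-periodicity of the output, and the $(\mathfrak{d}^{-1})^m$-periodicity of $h$ forces the output to be supported in $(\mathfrak{p}^{-e})^m$.

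The only difference from the paper is tactical. Rather than analysing the oscillatory integral directly, the paper conjugates by $\mathbf{w}_{\boldsymbol\delta I_m}$: it observes that it suffices to show $\omega_\psi(\mathbf{u}^\sharp(-C/\boldsymbol\delta))$ preserves $\omega_\psi(\mathbf{w}_{\boldsymbol\delta I_m})\mathbb{S}^{(i)}$, identifies this Fourier image as $\mathbb{S}((\mathfrak{p}^{i-e})^m/\mathfrak{o}^m)$ via the dual lattices for the pairing $\psi(-2\,{}^t\!XY/\boldsymbol\delta)$, and then notes that multiplication by $\psi(-{}^t\!X(C/\boldsymbol\delta)X)$ obviously preserves that space. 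This sidesteps the Weil constants and the Gaussian integral entirely. Your direct computation is the Fourier-dual unpacking of exactly the same fact: your period $(\mathfrak{d}^{-1})^m$ of $h$ corresponds to the support $\mathfrak{o}^m$ on the $\mathbf{w}$-side, and your support $(\mathfrak{p}^{\,i-e-c_\psi})^m$ of $h$ corresponds to the period $(\mathfrak{p}^{i-e})^m$ there. The paper's route is a little cleaner structurally; yours has the advantage of never leaving the original model.
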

\begin{proof}
Fix a vector $\lambda\in\mathfrak{o}^m$.
From Lemma \ref{generofgamma}, to show the invariance of $\mathbb{S}^{(i)},$ it suffices to show that $\Omega^{(i)}_\psi(\mathbf{m}(A))\Phi_\lambda^{(i)}$, $\Omega^{(i)}_\psi(\mathbf{u}^\sharp(\boldsymbol\delta^{-1}\varpi^{2i}B))\Phi_\lambda^{(i)}$
and $\Omega^{(i)}_\psi(\mathbf{u}^\flat(\boldsymbol\delta C))\Phi_\lambda^{(i)}$ all lie in $\mathbb{S}^{(i)}$ for any $A\in GL_m(\mathfrak{o})$ and any $B, C\in \Sym_m(\mathfrak{o}).$
The $\mathbf{m}(A)$ case is trivial.
The $\mathbf{u}^\sharp(\boldsymbol\delta^{-1}\varpi^{2i}B)$ case is also trivial, but it is worth mentioning that
\begin{equation}\label{decompofomegai}
\Omega^{(i)}_\psi(\mathbf{u}^\sharp(\boldsymbol\delta^{-1}\varpi^{2i}B))\Phi_\lambda^{(i)}=\psi\left(\varpi^{2i}\frac{{^t\!\lambda}B\lambda}{4\boldsymbol\delta}\right)\Phi_\lambda^{(i)}.
\end{equation}
Now we consider $\Omega^{(i)}_\psi(\mathbf{u}^\flat(\boldsymbol\delta C))\Phi_\lambda^{(i)}$.
Apparently, to get
\[
\Omega^{(i)}_\psi(\mathbf{u}^\flat(\boldsymbol\delta C))\Phi_\lambda^{(i)}\in\mathbb{S}^{(i)},
\]
it is sufficient to show that
\[
\omega_\psi(\mathbf{u}^\sharp(-C/\boldsymbol\delta))\omega_\psi(\mathbf{w}_{\boldsymbol\delta I_m})\Phi_\lambda^{(i)}\in\omega_\psi(\mathbf{w}_{\boldsymbol\delta I_m})\mathbb{S}^{(i)}.
\]
By the definition of the Weil representation, for any Schwartz function $\Phi$ in $\mathbb{S}(F^m)$, the function $\omega_\psi(\mathbf{w}_{\boldsymbol\delta I_m})\Phi(X)$ is a nonzero constant times of the Fourier transform $\widehat{\Phi}(-2X/\boldsymbol\delta)$.
But the dual lattices in $F^m$ associated to $(\mathfrak{p}^{-e})^m$ and $(\mathfrak{p}^{-i})^m$ with respect to $(X,Y)\mapsto\psi(-2{^t\!XY}/\boldsymbol\delta)$ are $\mathfrak{o}^m$ and $(\mathfrak{p}^{i-e})^m,$ respectively.
Thus
\begin{align*}
&\omega_\psi(\mathbf{w}_{\boldsymbol\delta I_m})\mathbb{S}^{(i)}=\mathbb{S}((\mathfrak{p}^{i-e})^m/\mathfrak{o}^m)\\
=&\{
f\in\mathbb{S}(F^m)\,\bigg|\,Supp(f)\subset(\mathfrak{p}^{i-e})^m, f(X+Y)=f(X)\mbox{ for any }Y\in\mathfrak{o}^m
\}
\end{align*}
(This also can be gotten from direct calculations).
But apparently $\omega_\psi(\mathbf{u}^\sharp(-C/\boldsymbol\delta))$ leaves this space fixed.
So we get that $\Omega^{(i)}_\psi(\mathbf{u}^\flat(\boldsymbol\delta C))\Phi_\lambda$ is in $\mathbb{S}^{(i)}$.
Here ends the proof for the invariance.
\end{proof}

By this proposition or calculating directly, we get an analogue of Lemma \ref{defofthechar}.
\begin{lemma}\label{defofthechar'}
The representation $\Omega_\psi^{(e)}$ defines a genuine character $\check{\varepsilon}$ of $\widetilde{\Gamma^{(e)}}$ by
\[
\Omega_\psi^{(e)}(\gamma)\Phi_0^{(e)}=\check{\varepsilon}(\gamma)^{-1}\Phi_0^{(e)}\quad(\gamma\in\widetilde{\Gamma^{(e)}}).
\]
\end{lemma}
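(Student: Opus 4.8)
The plan is to deduce $\check{\varepsilon}$ as a formal consequence of Proposition \ref{invofOmega} together with the dimension count, rather than by repeating the generator-by-generator computation that underlies Lemma \ref{defofthechar}. First I would specialize the dimension formula $\dim_\mathbb{C}\mathbb{S}^{(i)}=q^{m(e-i)}$ to the case $i=e$, which gives $\dim_\mathbb{C}\mathbb{S}^{(e)}=q^{0}=1$. Concretely, in the decomposition $\mathbb{S}^{(e)}=\bigoplus_{\lambda\in\mathfrak{o}^m/(\mathfrak{p}^{e-e})^m}\mathbb{C}\cdot\Phi_\lambda^{(e)}$ the index set $\mathfrak{o}^m/(\mathfrak{p}^{0})^m=\mathfrak{o}^m/\mathfrak{o}^m$ is trivial, so the only basis vector is $\Phi_0^{(e)}$ and therefore $\mathbb{S}^{(e)}=\mathbb{C}\cdot\Phi_0^{(e)}$.

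Next, Proposition \ref{invofOmega} applied with $i=e$ tells us that $\mathbb{S}^{(e)}$ is invariant under $\Omega_\psi^{(e)}$. Since this invariant space is one-dimensional, each operator $\Omega_\psi^{(e)}(\gamma)$ must act on $\Phi_0^{(e)}$ by multiplication by a scalar, which I would denote $\check{\varepsilon}(\gamma)^{-1}$. This scalar is nonzero: each $\omega_\psi(\gamma)$ is invertible, indeed unitary, so in fact $|\check{\varepsilon}(\gamma)|=1$, and hence $\check{\varepsilon}$ is a well-defined map $\widetilde{\Gamma^{(e)}}\to\mathbb{C}^\times$. Applying the representation identity $\Omega_\psi^{(e)}(\gamma_1\gamma_2)=\Omega_\psi^{(e)}(\gamma_1)\Omega_\psi^{(e)}(\gamma_2)$ to $\Phi_0^{(e)}$ and comparing the resulting scalars yields $\check{\varepsilon}(\gamma_1\gamma_2)=\check{\varepsilon}(\gamma_1)\check{\varepsilon}(\gamma_2)$, so $\check{\varepsilon}$ is a character.

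Finally, to verify that $\check{\varepsilon}$ is genuine I would evaluate it on the nontrivial element $[I_{2m},-1]$ of the kernel of $\widetilde{Sp_m(F)}\to Sp_m(F)$. Because the Weil representation satisfies $\omega_\psi([g,\epsilon])=\epsilon\,\omega_\psi([g,1])$, the element $[I_{2m},-1]$ acts by $-\mathrm{id}$ on all of $\mathbb{S}(F^m)$, and in particular on the line $\mathbb{S}^{(e)}$; hence $\check{\varepsilon}([I_{2m},-1])=-1$, which is precisely the statement that $\check{\varepsilon}$ does not factor through $Sp_m(F)$. There is essentially no hard step: the substance is already packaged into Proposition \ref{invofOmega} and the dimension count, after which everything is formal, the only point of care being the nonvanishing of the scalar $\check{\varepsilon}(\gamma)$, guaranteed by unitarity of $\omega_\psi$. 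Were one instead to follow the alternative route of direct computation, the main labor would lie in tracking the Weil indices $\alpha_\psi$ and the cocycle-dependent factors $\epsilon(\cdot)$ that appear in the formulas for $\omega_\psi(\mathbf{u}^\flat(S))$ and $\omega_\psi(\mathbf{w}_C)$ on the generators of $\Gamma^{(e)}$ furnished by Lemma \ref{generofgamma}.
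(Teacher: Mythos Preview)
Your argument is correct and is exactly the approach the paper indicates: it deduces the lemma from Proposition~\ref{invofOmega} together with the observation that $\dim_{\mathbb{C}}\mathbb{S}^{(e)}=1$, with the genuineness following from the action of $[I_{2m},-1]$. The paper merely states ``By this proposition or calculating directly'' in lieu of a written proof, so your write-up fills in precisely the intended details.
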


Since $\omega_\psi(\mathbf{m}(2I_m))\Phi_0=\alpha_\psi(1)\overline{\alpha_\psi(2^m)}2^{m/2}\Phi_0^{(e)}$ and $\mathbf{m}(2I_m)^{-1}\widetilde{\Gamma^{(e)}}\mathbf{m}(2I_m)=\widetilde{\Gamma_0(4)},$ we have the following relation between $\varepsilon$ and $\check{\varepsilon}$:
\begin{equation}\label{relabettwochars}
\varepsilon(\mathbf{m}(2I_m)^{-1}\gamma\mathbf{m}(2I_m))=\check{\varepsilon}(\gamma)\quad(\gamma\in\widetilde{\Gamma^{(e)}}).
\end{equation}

The formula of the action of $\mathbf{u}^\flat$ on $\Phi_\lambda^{(i)}$ is useful in our paper.

\begin{lemma}\label{uflatonphi}
If $\lambda\in\mathfrak{o}^m$ and $S\in\Sym_m(\mathfrak{o}),$ we have the following formula
\begin{align*}
&\omega_\psi(\mathbf{u}^\flat(\mathbf{\boldsymbol\delta S}))\Phi_\lambda^{(i)}\\
=&\epsilon_S\sum_{\mu\in\mathfrak{o}^m/(\mathfrak{p}^{e-i})^m}\int_{\mathfrak{o}^m}\psi\left(\varpi^i\frac{^tY\lambda}{2\boldsymbol\delta}-\varpi^{2i}\frac{^tYSY}{4\boldsymbol\delta}-\varpi^i\frac{^tY\mu}{2\boldsymbol\delta}\right)dY\Phi_\mu^{(i)}\\
=&\epsilon_Sq^{m(i-e)}\sum_{\mu,\nu\in\mathfrak{o}^m/(\mathfrak{p}^{e-i})^m}\psi\left(\varpi^i\frac{^t\nu\lambda}{2\boldsymbol\delta}-\varpi^{2i}\frac{^t\nu S\nu}{4\boldsymbol\delta}-\varpi^i\frac{^t\nu\mu}{2\boldsymbol\delta}\right)\Phi_\mu^{(i)}
\end{align*}
where is $\epsilon_S$ is a fourth root of $1$ depending only on $S.$
\end{lemma}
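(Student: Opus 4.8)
The plan is to evaluate the left-hand side directly from the closed formula for $\omega_\psi(\mathbf{u}^\flat(\,\cdot\,))$ recorded earlier in this section, specialized to the matrix $\boldsymbol\delta S$ and the function $\Phi=\Phi_\lambda^{(i)}$. The only two inputs are that formula and the Fourier transform (\ref{Fourtransofphilambda}) of $\Phi_\lambda^{(i)}$; everything else is a Gaussian integral over a lattice. So first I would write
\[
\omega_\psi(\mathbf{u}^\flat(\boldsymbol\delta S))\Phi_\lambda^{(i)}(X)=c_0\int_{F^m}\widehat{\Phi_\lambda^{(i)}}(-2Y)\,\psi(-{}^tY\boldsymbol\delta SY+2\,{}^tYX)\,dY,
\]
where $c_0=\tfrac{\alpha_\psi(1)^{1-2m}}{\alpha_\psi((-1)^m)}|4\boldsymbol\delta|^{m/2}\epsilon(\mathbf{w}_{I_m}\mathbf{m}(-I_m))\epsilon(\mathbf{w}_{-I_m}\mathbf{u}^\sharp(-\boldsymbol\delta S)\mathbf{w}_{I_m})$, and then substitute (\ref{Fourtransofphilambda}), which turns $\widehat{\Phi_\lambda^{(i)}}(-2Y)$ into $|\boldsymbol\delta\varpi^{-2i}|^{m/2}\psi(-{}^tY\lambda)\Phi_0(-2\boldsymbol\delta\varpi^{-i}Y)$. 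The cutoff $\Phi_0(-2\boldsymbol\delta\varpi^{-i}Y)$ restricts the integral to $Y\in\tfrac{\varpi^i}{2\boldsymbol\delta}\mathfrak{o}^m$.

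Next I would make the substitution $Y=-\tfrac{\varpi^i}{2\boldsymbol\delta}W$, under which the cutoff becomes $W\in\mathfrak{o}^m$ and $dY=|\varpi^i/2\boldsymbol\delta|^m\,dW$. The three phases become $\tfrac{\varpi^i}{2\boldsymbol\delta}{}^tW\lambda$, $-\tfrac{\varpi^{2i}}{4\boldsymbol\delta}{}^tWSW$ and $-\tfrac{\varpi^i}{\boldsymbol\delta}{}^tWX$. The pleasant point is that all absolute-value factors collapse: the constant $|\boldsymbol\delta\varpi^{-2i}|^{m/2}|\varpi^i/2\boldsymbol\delta|^m$ coming from (\ref{Fourtransofphilambda}) and the Jacobian equals $|\boldsymbol\delta|^{-m/2}|2|^{-m}$, which exactly cancels the factor $|4\boldsymbol\delta|^{m/2}=|2|^m|\boldsymbol\delta|^{m/2}$ sitting in $c_0$ (here one uses $(2)=\mathfrak{p}^e$, i.e. $|2|=q^{-e}$). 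What survives is the pure root-of-unity constant $\epsilon_S:=\tfrac{\alpha_\psi(1)^{1-2m}}{\alpha_\psi((-1)^m)}\epsilon(\mathbf{w}_{I_m}\mathbf{m}(-I_m))\epsilon(\mathbf{w}_{-I_m}\mathbf{u}^\sharp(-\boldsymbol\delta S)\mathbf{w}_{I_m})$, which visibly involves neither $\lambda$ nor $X$ and depends on the data only through $S$. To see it is a fourth root of unity I would use $\alpha_\psi(1)^8=1$ together with $\alpha_\psi(-1)=\overline{\alpha_\psi(1)}=\alpha_\psi(1)^{-1}$: distinguishing $m$ even from $m$ odd, the quotient $\alpha_\psi(1)^{1-2m}/\alpha_\psi((-1)^m)$ is in each case a power of $\alpha_\psi(1)^2$, hence a fourth root of unity, while the two $\epsilon(\,\cdot\,)$ are signs.

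With these reductions the function $X\mapsto\omega_\psi(\mathbf{u}^\flat(\boldsymbol\delta S))\Phi_\lambda^{(i)}(X)$ equals $\epsilon_S\int_{\mathfrak{o}^m}\psi(\cdots)\,dW$ with the three phases above. To read off the theta-like expansion I would fix $\mu\in\mathfrak{o}^m/(\mathfrak{p}^{e-i})^m$ and evaluate at $X\in\tfrac{\mu}{2}+(\mathfrak{p}^{-i})^m$, the support coset of $\Phi_\mu^{(i)}$. Writing $X=\tfrac{\mu}{2}+t$ with $t\in(\mathfrak{p}^{-i})^m$, the linear phase splits as $-\tfrac{\varpi^i}{2\boldsymbol\delta}{}^tW\mu-\tfrac{\varpi^i}{\boldsymbol\delta}{}^tWt$, and since $\tfrac{\varpi^i}{\boldsymbol\delta}{}^tWt\in\boldsymbol\delta^{-1}\mathfrak{o}=\mathfrak{p}^{-c_\psi}$ while $\psi$ has index $c_\psi$, the $t$-term contributes $1$. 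Thus the value is constant on each coset (consistent with Proposition \ref{invofOmega}, i.e. the result lies in $\mathbb{S}^{(i)}$), and that value is precisely the coefficient of $\Phi_\mu^{(i)}$; this is the first displayed identity, the sum running over $\mu\in\mathfrak{o}^m/(\mathfrak{p}^{e-i})^m$ because two vectors give the same coset iff they differ by $(2\mathfrak{p}^{-i})^m=(\mathfrak{p}^{e-i})^m$.

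For the second identity I would note that the integrand over $\mathfrak{o}^m$ is invariant under $W\mapsto W+w$ for $w\in(\mathfrak{p}^{e-i})^m$: each linear term changes by an element of $\tfrac{\varpi^e}{2\boldsymbol\delta}\mathfrak{o}=\boldsymbol\delta^{-1}\mathfrak{o}=\mathfrak{p}^{-c_\psi}$, and both pieces of the quadratic term $\tfrac{\varpi^{2i}}{4\boldsymbol\delta}{}^tWSW$ likewise land in $\mathfrak{p}^{-c_\psi}$ after using $(2)=\mathfrak{p}^e$, so $\psi$ kills them. Hence the integral collapses to $\mathrm{Vol}((\mathfrak{p}^{e-i})^m)=q^{m(i-e)}$ times the finite sum over representatives $\nu\in\mathfrak{o}^m/(\mathfrak{p}^{e-i})^m$, which is the second displayed line. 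The main obstacle in all of this is not the integral itself but the bookkeeping around the constant: verifying that the absolute-value factors cancel exactly and that the residual metaplectic and Weil-index data assemble into a single fourth root of unity $\epsilon_S$ depending only on $S$. The invariance computations, though routine, also demand care in tracking the ideals $\boldsymbol\delta$, $(2)=\mathfrak{p}^e$ and the index $c_\psi$, so that every stray term genuinely lands in $\mathfrak{p}^{-c_\psi}$.
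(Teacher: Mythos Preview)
Your proposal is correct and follows essentially the same route as the paper: apply the explicit formula for $\omega_\psi(\mathbf{u}^\flat(\cdot))$, insert the Fourier transform (\ref{Fourtransofphilambda}), rescale $Y\mapsto -\tfrac{\varpi^i}{2\boldsymbol\delta}Y$ to reduce to an integral over $\mathfrak{o}^m$, then read off the $\Phi_\mu^{(i)}$-coefficients and discretize. Your write-up is in fact more explicit than the paper's on two points it leaves to the reader: the verification that the absolute-value factors cancel exactly, and the reason the integrand descends to $\mathfrak{o}^m/(\mathfrak{p}^{e-i})^m$; the paper simply invokes Proposition~\ref{invofOmega} for the former coset-constancy and asserts the well-definedness for the latter.
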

\begin{proof}
This can be deduced by direct calculation.
Actually, by the definition of the Weil representation, we have
\begin{align*}
&\omega_\psi(\mathbf{u}^\flat(\boldsymbol\delta S))\Phi_\lambda^{(i)}(X)\\
=&\epsilon_S|4\boldsymbol\delta|^{m/2}\int_{F^m}\widehat{\Phi_\lambda^{(i)}}(-2Y)\psi(-\boldsymbol\delta\cdot{^tY}SY+2\cdot{^tY}X)dY\\
=&\epsilon_S|2\boldsymbol\delta\varpi^{-i}|^m\int_{F^m}\Phi_0(-2\boldsymbol\delta\varpi^{-i}Y)\psi(-^tY\lambda-\boldsymbol\delta\cdot{^tY}SY+2\cdot{^tY}X)dY\\
=&\epsilon_S\int_{\mathfrak{o}^m}\psi\left(\varpi^i\frac{^tY\lambda}{2\boldsymbol\delta}-\varpi^{2i}\frac{^tYSY}{4\boldsymbol\delta}-\varpi^i\frac{^tYX}{\boldsymbol\delta}\right)dY\\
=&\epsilon_S\sum_{\mu\in\mathfrak{o}^m/(\mathfrak{p}^{e-i})^m}\int_{\mathfrak{o}^m}\psi\left(\varpi^i\frac{^tY\lambda}{2\boldsymbol\delta}-\varpi^{2i}\frac{^tYSY}{4\boldsymbol\delta}-\varpi^i\frac{^tY\mu}{2\boldsymbol\delta}\right)dY\Phi_\mu^{(i)}(X)\\
=&\epsilon_Sq^{m(i-e)}\sum_{\mu,\nu\in\mathfrak{o}^m/(\mathfrak{o}^{e-i})^m}\psi\left(\varpi^i\frac{^t\nu\lambda}{2\boldsymbol\delta}-\varpi^{2i}\frac{^t\nu S\nu}{4\boldsymbol\delta}-\varpi^i\frac{^t\nu\mu}{2\boldsymbol\delta}\right)\Phi_\mu^{(i)}(X)
\end{align*}
where
\[
\epsilon_S=\frac{\alpha_\psi(1)^{1-2m}}{\alpha_\psi((-1)^m)}\epsilon(\mathbf{w}_{I_m}\mathbf{m}(-I_m))\epsilon(\mathbf{w}_{-I_m}\mathbf{u}^\sharp(-\boldsymbol\delta S)\mathbf{w}_{I_m})
\]
is a fourth root of $1$ by the properties of the Weil constant.
Note that we used Proposition \ref{invofOmega} in the fourth equation and the fact that the formulas does not depend on the choices of $\mu$ and $\nu$ in the fourth and fifth equations.
\end{proof}

\begin{proposition}\label{irrofOmega}
Given $0\leq i\leq e$, with the same notations in Proposition \ref{invofOmega}, we consider $\Omega_\psi^{(i)}$ as a representation of $\widetilde{\Gamma^{(i)}}$ on $\mathbb{S}^{(i)}$.
Then $\Omega_\psi^{(i)}$ is irreducible.
\end{proposition}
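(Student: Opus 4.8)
The plan is to show that $\mathbb{S}^{(i)}$ has no proper nonzero $\widetilde{\Gamma^{(i)}}$-invariant subspace. By Lemma \ref{generofgamma} it suffices to test invariance against the three families of generators $\mathbf{m}(A)$ ($A\in GL_m(\mathfrak{o})$), $\mathbf{u}^\sharp(\boldsymbol\delta^{-1}\varpi^{2i}B)$ ($B\in\Sym_m(\mathfrak{o})$) and $\mathbf{u}^\flat(\boldsymbol\delta S)$ ($S\in\Sym_m(\mathfrak{o})$), whose actions on the basis $\{\Phi_\lambda^{(i)}\}$ are recorded in (\ref{decompofomegai}), in Lemma \ref{uflatonphi}, and in the defining formula for $\omega_\psi(\mathbf{m}(A))$.

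First I would exploit that the upper unipotent operators $T_B:=\Omega_\psi^{(i)}(\mathbf{u}^\sharp(\boldsymbol\delta^{-1}\varpi^{2i}B))$ act diagonally on $\{\Phi_\lambda^{(i)}\}$ by (\ref{decompofomegai}), with eigenvalue the character $\chi_\lambda(B)=\psi(\varpi^{2i}\,{}^t\!\lambda B\lambda/4\boldsymbol\delta)$. Since the $T_B$ form a commuting family of diagonalizable operators, any invariant subspace $V$ is the direct sum of its intersections with the joint eigenspaces $E_\chi=\bigoplus_{\chi_\lambda=\chi}\mathbb{C}\Phi_\lambda^{(i)}$. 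Dually, conjugating the lower unipotent operators $U_S:=\Omega_\psi^{(i)}(\mathbf{u}^\flat(\boldsymbol\delta S))$ by $\omega_\psi(\mathbf{w}_{\boldsymbol\delta I_m})$ turns them into multiplication operators on $\omega_\psi(\mathbf{w}_{\boldsymbol\delta I_m})\mathbb{S}^{(i)}=\mathbb{S}((\mathfrak{p}^{i-e})^m/\mathfrak{o}^m)$, the space computed inside the proof of Proposition \ref{invofOmega}, so $V$ is simultaneously a sum of joint eigenspaces for the $U_S$ in the Fourier-dual basis.

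The heart of the argument is then a \emph{position/momentum} non-degeneracy statement: the two eigenbasis decompositions --- the basis $\{\Phi_\lambda^{(i)}\}$ for the $T_B$ and its Fourier transform for the $U_S$ --- are in sufficiently general position that no nonzero proper subspace can simultaneously be a sum of eigenspaces for both families. Concretely, I would read off the matrix coefficients of $U_S$ in the $\{\Phi_\lambda^{(i)}\}$ basis from Lemma \ref{uflatonphi}; these are quadratic Gauss sums in $\nu$, and the goal is to show they are nonzero for enough pairs $(\lambda,\mu)$ to force any common invariant $V$ to spread over the entire basis, hence to equal all of $\mathbb{S}^{(i)}$. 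The permutation action $\Omega_\psi^{(i)}(\mathbf{m}(A))$, which sends $\Phi_\lambda^{(i)}$ to a root of unity times $\Phi_{{}^t\!A^{-1}\lambda}^{(i)}$ --- in particular $\mathbf{m}(-I_m)$ interchanges $\Phi_\lambda^{(i)}$ and $\Phi_{-\lambda}^{(i)}$ --- is used both to move between basis vectors across eigenspaces and to resolve the $\lambda\leftrightarrow-\lambda$ ambiguity inherent in the quadratic character $\chi_\lambda$.

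The main obstacle I expect is precisely this non-degeneracy step, and it has two sources of delicacy. First, because $2$ is not a unit (we are in the regime $e>0$), the eigencharacters $\chi_\lambda$ detect ${}^t\!\lambda B\lambda$ only modulo a power of $\mathfrak{p}$, so they do not separate the $\lambda$ individually; one must determine the joint eigenspaces $E_\chi$ carefully and control the fact that they may be strictly larger than $\mathbb{C}\Phi_\lambda^{(i)}\oplus\mathbb{C}\Phi_{-\lambda}^{(i)}$. Second, proving that the relevant Gauss-type sums in Lemma \ref{uflatonphi} do not vanish, uniformly enough to connect all the eigenspaces, is where the substantive work lies; this is the finite analogue of the Stone--von Neumann irreducibility of the Weil representation, and I would isolate it as a separate lemma before assembling the pieces above.
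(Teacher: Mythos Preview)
Your overall strategy matches the paper's: use (\ref{decompofomegai}) to see that the upper unipotents act diagonally in the basis $\{\Phi_\lambda^{(i)}\}$, conclude that any invariant subspace is a sum of joint eigenspaces, and then use the lower unipotents $\mathbf{u}^\flat(\boldsymbol\delta S)$ via Lemma \ref{uflatonphi} to show these eigenspaces are all linked. The place where you diverge from the paper is in the first ``obstacle'' you flag, and that is where you are making your life harder than necessary.

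You worry that the joint eigenspaces $E_\chi$ may be strictly larger than the lines $\mathbb{C}\Phi_\lambda^{(i)}$, in particular because of a $\lambda\leftrightarrow-\lambda$ ambiguity, and you plan to use $\mathbf{m}(A)$ to repair this. In the present $2$-adic setting this concern evaporates: the paper observes (and it is worth checking) that as $\lambda$ ranges over $\mathfrak{o}^m/(\mathfrak{p}^{e-i})^m$ the characters $B\mapsto\psi(\varpi^{2i}\,{}^t\!\lambda B\lambda/(4\boldsymbol\delta))$ of $\Sym_m(\mathfrak{o})$ are \emph{pairwise distinct}. Indeed, testing against diagonal $B$ reduces to the one-variable statement that $\lambda\mapsto\lambda^2$ is injective from $\mathfrak{o}/\mathfrak{p}^{e-i}$ to $\mathfrak{o}/\mathfrak{p}^{2(e-i)}$; this holds because if $v(\lambda-\mu)=j<e-i$ then $v(\lambda+\mu)=v((\lambda-\mu)+2\mu)=j$ as well (since $2\mu\in\mathfrak{p}^e\subset\mathfrak{p}^{e-i}$), forcing $v(\lambda^2-\mu^2)=2j<2(e-i)$. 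In particular $-\lambda\equiv\lambda\pmod{\mathfrak{p}^{e-i}}$, so there is no sign ambiguity to resolve and no need to invoke $\mathbf{m}(A)$. Hence every invariant subspace is already of the form $\bigoplus_{\lambda\in S}\mathbb{C}\Phi_\lambda^{(i)}$ for some subset $S$.

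With that in hand, the second obstacle you identify is exactly the one the paper addresses, and in the same spirit you suggest: one must show that for any fixed $\lambda$ and any target $\kappa$ there is some $S$ with $\big(\Omega_\psi^{(i)}(\mathbf{u}^\flat(\boldsymbol\delta S))\Phi_\lambda^{(i)},\Phi_\kappa^{(i)}\big)\neq 0$. The paper does not treat this as a black-box Stone--von~Neumann statement but computes it directly: choosing $S$ \emph{diagonal} factors the matrix coefficient from Lemma \ref{uflatonphi} into a product of $m$ one-dimensional Gauss-type integrals, and the nonvanishing of each factor (after the substitution $d_j=(\lambda_j-\kappa_j)^2u$ with $u\in\mathfrak{o}^\times$) is exactly Lemma~2.10(2) of \cite{HiraIke:13}. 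So your plan is sound, but once you exploit the distinctness of the characters it collapses to this single concrete Gauss-sum computation rather than the more elaborate position/momentum analysis you outline.
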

\begin{proof}
Obviously, as $\lambda$ running over elements in $\mathfrak{o}^m/(\mathfrak{p}^{e-i})^m,$ the functions $\psi(\varpi^{2i}\cdot^t\!\lambda D\lambda/(4\boldsymbol\delta))$ of $D\in\Sym_m(\mathfrak{o})$ give $q^{m(e-i)}$ distinct characters of $\Sym_m(\mathfrak{o})$.
So by equation (\ref{decompofomegai}) and the linear independence of distinct characters, we have that if $\mathbb{S}'$ is an invariant subspace of $\mathbb{S}^{(i)},$ then $\mathbb{S}'$ must take the form of
\[
\mathbb{S}'=\bigoplus_{\lambda\in S}\mathbb{C}\cdot\Phi_\lambda
\]
where $S$ is a subset of $\mathfrak{o}^m/(\mathfrak{p}^{e-i})^m$.
To get $\mathbb{S}'=\mathbb{S}^{(i)},$ it suffices to show that for a fixed $\lambda\in S$ and an arbitrarily chosen $\kappa\in\mathfrak{o}^m,$ there exists some $D\in\Sym_m(\mathfrak{o})$ such that
\[
\left(\Omega^{(i)}_\psi(\mathbf{u}^\flat(\boldsymbol\delta D)\Phi_\lambda^{(i)},\Phi_\kappa^{(i)}\right)\neq0.
\]
Fix one $\kappa$.
Say there are exactly $l$ components of $\lambda$ which are not congruent to the corresponding one of $\kappa$ modulo $\mathfrak{p}^{e-i}$.
Without loss of generality, we may assume $\lambda_j\not\equiv\kappa_j$ (mod $\mathfrak{p}^{e-i}$) exactly for $1\leq j\leq l$.
If we take a diagonal matrix $D'\in\Sym_m(\mathfrak{\mathfrak{o}})$ with diagonal entries $d_1,\dots, d_l\in\mathfrak{o}\backslash\{0\}$ and $d_{l+1}=\dots=d_m=0,$ then by Lemma \ref{uflatonphi}, we have
\begin{align*}
&\left(\Omega^{(i)}_\psi(\mathbf{u}^\flat(\boldsymbol\delta D')\Phi_\lambda^{(i)},\Phi_\kappa^{(i)}\right)\\
=&\epsilon_{D'}\int_{\mathfrak{o}^m}\psi\left(\varpi^i\frac{^tY\lambda}{2\boldsymbol\delta}-\varpi^{2i}\frac{\cdot^t\!YD'Y}{4\boldsymbol\delta}-\varpi^i\frac{^tY\kappa}{2\boldsymbol\delta}\right)dY\\
=&\epsilon_{D'}\prod_{j=1}^l\left[\int_\mathfrak{o}\psi\left(-\frac{d_j}{4\boldsymbol\delta}(\varpi^iy-\frac{\lambda_j-\kappa_j}{d_j})^2\right)dy\cdot\psi\left(\frac{(\lambda_j-\kappa_j)^2}{4\boldsymbol\delta d_j}\right)\right].
\end{align*}
Since $\lambda_j-\kappa_j\in\mathfrak{o}$ for any $1\leq j\leq l,$ for our purpose, we only need to show that for any $\tau\in\mathfrak{o},$ there exists some $d\in\mathfrak{o}\backslash\{0\}$ such that
\[
\int_\mathfrak{o}\psi\left(-\frac{d}{4\boldsymbol\delta}(\varpi^iy-\frac{\tau}{d})^2\right)dy\neq0.
\]
Consider the case $d=\tau^2u$ for some $u\in\mathfrak{o}^\times$.
Then it is reduced to show that there exists some unit $u$ such that
\begin{align*}
&\int_\mathfrak{o}\psi\left(-\frac{\tau^2u}{4\boldsymbol\delta}(\varpi^iy-\frac{1}{\tau u})^2\right)dy\\
=&\int_\mathfrak{o}\psi\left(-\frac{\varpi^{2i}\tau^2}{4\boldsymbol\delta u}(uy-\frac{1}{\varpi^i\tau})^2\right)dy\\
=&\int_\mathfrak{o}\psi\left(-\frac{\varpi^{2i}\tau^2}{4\boldsymbol\delta u}(y-\frac{1}{\varpi^i\tau})^2\right)dy
\end{align*}
is non-zero.
This simply follows from (2) of Lemma 2.10 in \cite{HiraIke:13}, so we get what we want to show.
\end{proof}


\section{A key lemma}
\label{A key lemma}
We use the same notations as in Section \ref{Weil Representation} and continue the assumption that $F$ is non-archimedean.
Let $\Omega_\psi=\Omega_\psi^{(0)}$ and $\Gamma=\Gamma_0(1)$.
The next lemma is essential in proving our main theorems.
\begin{lemma}
\label{mainlemma}
Let $\pi$ be a genuine representation of $\widetilde{Sp_m(F)}$ on a vector space $V$.
If there are $q^{em}$ vectors in $V,$ which are denoted by $h_\kappa$ for $\kappa\in\mathfrak{o}^m/(2\mathfrak{o})^m,$ such that $\pi$ has the properties that
\[
\pi(\mathbf{u}^\sharp(B/\boldsymbol\delta))h_\kappa=\psi\left(\frac{^t\!\kappa B\kappa}{4\boldsymbol\delta}\right)h_\kappa
\]
for any $B\in\Sym_m(\mathfrak{o}), \kappa\in\mathfrak{o}^m/(2\mathfrak{o})^m$ and
\[
\pi(\gamma)\left(\sum_{\kappa\in\mathfrak{o}^m/(2\mathfrak{o})^m}h_\kappa\right)=\check{\varepsilon}(\gamma)^{-1}\left(\sum_{\kappa\in\mathfrak{o}^m/(2\mathfrak{o})^m}h_\kappa\right)
\]
for any $\gamma\in\widetilde{\Gamma^{(e)}},$
then $(\pi|_{\widetilde{\Gamma}},\oplus_{\kappa}\mathbb{C}\cdot h_\kappa)$ forms a representation of $\widetilde{\Gamma}$ equivalent to the Weil representation $\Omega_\psi$ and $h_\kappa\mapsto\Phi_\kappa$ gives an intertwining map for it.
\end{lemma}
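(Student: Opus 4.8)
The plan is to exhibit the linear bijection $T\colon\mathbb{S}^{(0)}\to\bigoplus_\kappa\mathbb{C}\cdot h_\kappa$ determined by $T\Phi_\kappa=h_\kappa$ and to prove that it intertwines $\Omega_\psi$ with $\pi|_{\widetilde{\Gamma}}$; its inverse $h_\kappa\mapsto\Phi_\kappa$ is then the asserted intertwining map. First, hypothesis (1) says that $h_\kappa$ lies in the $\chi_\kappa$-eigenspace of the abelian family $\{\pi(\mathbf{u}^\sharp(B/\boldsymbol\delta))\}_{B\in\Sym_m(\mathfrak{o})}$, where $\chi_\kappa(B)=\psi(^t\!\kappa B\kappa/(4\boldsymbol\delta))$. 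Since these characters are pairwise distinct for $\kappa\in\mathfrak{o}^m/(2\mathfrak{o})^m$ (this is exactly the distinctness established at the start of the proof of Proposition \ref{irrofOmega}), the $h_\kappa$ are linearly independent and $T$ is a bijection onto $\bigoplus_\kappa\mathbb{C}\cdot h_\kappa$. By Lemma \ref{generofgamma}, $\widetilde{\Gamma}$ is generated by the $\mathbf{m}(A)$ with $A\in GL_m(\mathfrak{o})$, the $\mathbf{u}^\sharp(B/\boldsymbol\delta)$ and $\mathbf{u}^\flat(\boldsymbol\delta C)$ with $B,C\in\Sym_m(\mathfrak{o})$, together with the central $[-1]$, so it suffices to check that $T$ commutes with each of these. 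For $\mathbf{u}^\sharp(B/\boldsymbol\delta)$ this is the comparison of hypothesis (1) with (\ref{decompofomegai}) at $i=0$, and for $[-1]$ it is the genuineness of both representations.

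The two remaining generators will be handled uniformly. Let $P_\kappa=\int_{\Sym_m(\mathfrak{o})}\overline{\chi_\kappa(B)}\,\pi(\mathbf{u}^\sharp(B/\boldsymbol\delta))\,dB$, the Haar measure normalized so that $\Sym_m(\mathfrak{o})$ has volume one. Writing $v_0=\sum_\kappa h_\kappa$, hypothesis (1) and the orthogonality of the distinct characters $\chi_\kappa$ give $P_\kappa v_0=h_\kappa$. The same computation inside $\mathbb{S}^{(0)}$, using (\ref{decompofomegai}) together with the elementary identity $\Phi_0^{(e)}=\sum_\kappa\Phi_\kappa$ (the characteristic function of $\frac12\mathfrak{o}^m=(\mathfrak{p}^{-e})^m$), shows that the corresponding operator sends $\Phi_0^{(e)}$ to $\Phi_\kappa$. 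Moreover $v_0$ and $\Phi_0^{(e)}$ play identical roles under $\widetilde{\Gamma^{(e)}}$: hypothesis (2) says $\pi(\gamma')v_0=\check{\varepsilon}(\gamma')^{-1}v_0$, while Lemma \ref{defofthechar'} gives $\Omega_\psi(\gamma')\Phi_0^{(e)}=\check{\varepsilon}(\gamma')^{-1}\Phi_0^{(e)}$ for $\gamma'\in\widetilde{\Gamma^{(e)}}$ (here $\Phi_0^{(e)}\in\mathbb{S}^{(e)}\subset\mathbb{S}^{(0)}$).

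Now fix $\gamma\in\{\mathbf{m}(A),\mathbf{u}^\flat(\boldsymbol\delta C)\}$; both lie in $\widetilde{\Gamma^{(e)}}$. The key point is a group-theoretic factorization: for every $B\in\Sym_m(\mathfrak{o})$ one can write, inside $\widetilde{Sp_m(F)}$,
\[
\gamma\,\mathbf{u}^\sharp(B/\boldsymbol\delta)=\epsilon\,\mathbf{u}^\sharp(B''/\boldsymbol\delta)\,\gamma',
\]
with a sign $\epsilon\in\{\pm1\}$, a symmetric $B''\in\Sym_m(\mathfrak{o})$, and $\gamma'\in\widetilde{\Gamma^{(e)}}$; for $\gamma=\mathbf{m}(A)$ one has $B''=AB\,{}^t\!A$ and $\gamma'=\mathbf{m}(A)$, while for $\gamma=\mathbf{u}^\flat(\boldsymbol\delta C)$ a block computation shows that $\gamma'$ has lower-left block $\boldsymbol\delta C$ and upper-left block $I-B''C$, and one may take $B''\equiv B(I+CB)^{-1}\pmod{4\mathfrak{o}}$. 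Applying $\pi$, using $h_\kappa=P_\kappa v_0$, hypothesis (2) on $\gamma'$ and hypothesis (1) on $\mathbf{u}^\sharp(B''/\boldsymbol\delta)$, we obtain
\[
\pi(\gamma)h_\kappa=\int_{\Sym_m(\mathfrak{o})}\overline{\chi_\kappa(B)}\,\epsilon\,\check{\varepsilon}(\gamma')^{-1}\Big(\sum_\mu\chi_\mu(B'')\,h_\mu\Big)\,dB.
\]
Every scalar occurring here—the characters $\chi_\kappa,\chi_\mu$, the value $\check{\varepsilon}(\gamma')^{-1}$, the sign $\epsilon$, and the combinatorial data $B'',\gamma'$—is intrinsic to $\widetilde{Sp_m(F)}$ and agrees with the corresponding quantity for $\Omega_\psi$ acting on $\Phi_\kappa=P_\kappa\Phi_0^{(e)}$, because the factorization is a single identity in the group and $\Omega_\psi,\pi$ coincide on all the inputs. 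Hence the identical formula computes $\Omega_\psi(\gamma)\Phi_\kappa$ with each $h_\mu$ replaced by $\Phi_\mu$, that is $\pi(\gamma)h_\kappa=T(\Omega_\psi(\gamma)\Phi_\kappa)$. (As a check, for $\gamma=\mathbf{m}(A)$ the integral collapses to $\epsilon\,\check{\varepsilon}(\mathbf{m}(A))^{-1}h_{{}^t\!A^{-1}\kappa}$, matching the formula for $\omega_\psi(\mathbf{m}(A))$.) This gives intertwining on all generators, so $T$ is an isomorphism of $\widetilde{\Gamma}$-representations; in particular $\bigoplus_\kappa\mathbb{C}\cdot h_\kappa$ is $\pi|_{\widetilde{\Gamma}}$-stable, and the cusp/non-cusp content is irrelevant here.

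The main obstacle is the metaplectic bookkeeping in the displayed factorization. What must be verified with care is that the identity really holds in the double cover $\widetilde{Sp_m(F)}$ for the chosen lifts $[\,\cdot\,,1]$, that the sign $\epsilon$ is governed purely by Rao's cocycle $\boldsymbol{c}$ and is therefore the same whether one applies $\pi$ or $\Omega_\psi$, and that $\gamma'$ genuinely lands in $\widetilde{\Gamma^{(e)}}$ with a symmetric integral $B''$ in the required congruence class. Once these checks are in place the comparison is automatic, since $\epsilon$ cancels between the two sides and no explicit evaluation of the Gauss sums of Lemma \ref{uflatonphi} is needed; alternatively one could match the integral above against Lemma \ref{uflatonphi} directly, but the cocycle-cancellation route is cleaner.
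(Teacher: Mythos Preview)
Your argument has a genuine gap at the crucial factorization step. You claim that for $\gamma=\mathbf{u}^\flat(\boldsymbol\delta C)$ and every $B\in\Sym_m(\mathfrak{o})$ one may write
\[
\gamma\,\mathbf{u}^\sharp(B/\boldsymbol\delta)=\epsilon\,\mathbf{u}^\sharp(B''/\boldsymbol\delta)\,\gamma',\qquad \gamma'\in\widetilde{\Gamma^{(e)}},\ B''\in\Sym_m(\mathfrak{o}),
\]
but this is false in general. Your own formula $B''\equiv B(I+CB)^{-1}\pmod{4\mathfrak{o}}$ already presupposes that $I+CB$ is invertible modulo $4\mathfrak{o}$, which need not hold. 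Concretely, take $m=1$, $F=\mathbb{Q}_2$, $\boldsymbol\delta=1$ and $C=B=1$; then
\[
\mathbf{u}^\flat(1)\mathbf{u}^\sharp(1)=\begin{pmatrix}1&1\\1&2\end{pmatrix},\qquad
\mathbf{u}^\sharp(-B'')\begin{pmatrix}1&1\\1&2\end{pmatrix}=\begin{pmatrix}1-B''&1-2B''\\1&2\end{pmatrix},
\]
and the upper-right entry $1-2B''$ is odd for every $B''\in\mathbb{Z}_2$, so it can never lie in $4\mathbb{Z}_2$. More structurally, the upper unipotents $\{\mathbf{u}^\sharp(B/\boldsymbol\delta):B\in\Sym_m(\mathfrak{o}/4\mathfrak{o})\}$ do \emph{not} form a full set of coset representatives for $\Gamma/\Gamma^{(e)}$ (already for $m=1$ over $\mathbb{Q}_2$ the index is $6$ while there are only $4$ such unipotents), so your one-step reduction to hypothesis~(2) cannot succeed for all $B$.

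This is exactly why the paper's proof proceeds by induction through the chain $\Gamma^{(e)}\subset\Gamma^{(e-1)}\subset\cdots\subset\Gamma^{(0)}=\Gamma$. At each step one only needs the conjugate
\[
\mathbf{u}^\sharp\!\left(-\tfrac{4D}{\boldsymbol\delta\varpi^{2i+1}}\right)\mathbf{u}^\flat(\boldsymbol\delta S)\,\mathbf{u}^\sharp\!\left(\tfrac{4D}{\boldsymbol\delta\varpi^{2i+1}}\right)
\]
to land in $\widetilde{\Gamma^{(e-i)}}$, which is a much milder requirement and does hold; the inductive hypothesis then supplies the action of $\widetilde{\Gamma^{(e-i)}}$ on the intermediate vectors $h_\kappa^{(e-i)}$, and one bootstraps down to $i=0$. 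Your projector idea $h_\kappa=P_\kappa v_0$ is fine (and is essentially equation~(\ref{eq:key1}) in the paper), but the information contained in hypotheses~(1) and~(2) alone is not enough to evaluate $\pi(\mathbf{u}^\flat(\boldsymbol\delta C))h_\kappa$ in a single step; the intermediate representations $\Omega_\psi^{(i)}$ are needed to bridge the gap.
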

\begin{proof}
We use the induction to prove this lemma.
The spirit of the proof of Theorem 1 in \cite{Ibuki:92} will be applied.
For $0\leq i\leq e$ and $\kappa\in\mathfrak{o}^m,$ put
\[
h^{(e-i)}_\kappa=\sum_{\begin{smallmatrix}\lambda\equiv\kappa\,\mathrm{mod}\,(\mathfrak{p}^i)^m\\ \lambda\,\mathrm{mod}\,(2\mathfrak{o})^m\end{smallmatrix}}h_\lambda.
\]
This definition only depends on $\kappa$ mod $(\mathfrak{p}^i)^m$.
In particular, $h^{(0)}_\kappa=h_\kappa$ and $h^{(e)}_\kappa$ is the sum of all $h_\lambda$ for arbitrary $\kappa$.
By the assumption of the lemma, we already have that $\oplus_\kappa\mathbb{C}\cdot h^{(e)}_\kappa=\mathbb{C}\cdot(\sum_\lambda h_\lambda)$ is invariant under $\widetilde{\Gamma^{(e)}}$ and gives an representation equivalent to $\Omega_\psi^{(e)}$ under the map $h_\kappa^{(e)}\mapsto\Phi_\kappa^{(e)}$.
Now fix $0\leq i\leq e-1$ and assume that $\oplus_\kappa\mathbb{C}\cdot h_\kappa^{(e-i)}$ gives an representation of $\widetilde{\Gamma^{(e-i)}}$ equivalent to $\Omega_\psi^{(e-i)}$ and $h_\kappa^{(e-i)}\mapsto\Phi_\kappa^{(e-i)}$ forms an intertwining map.
We want to show that under this condition, the similar statement also holds for $\oplus_\kappa\mathbb{C}\cdot h_\kappa^{(e-i-1)}$.
Fix one $\kappa\in\mathfrak{o}^m$.
By Lemma \ref{generofgamma}, Lemma \ref{uflatonphi} and the assumption of the presenting lemma, it suffices to show that 
\begin{align}
&\pi(\mathbf{u}^\flat(\boldsymbol\delta S))h_\kappa^{(e-i-1)} \nonumber \\
=&\epsilon_Sq^{-m(i+1)}\sum_{\mu,\nu\in\mathfrak{o}^m/(\mathfrak{p}^{i+1})^m}\psi
\left(\frac{^t\nu\kappa}{\boldsymbol\delta\varpi^{i+1}}-\frac{^t\nu S\nu}{\boldsymbol\delta\varpi^{2(i+1)}}-\frac{^t\nu\mu}{\boldsymbol\delta\varpi^{i+1}}\right)h_\mu^{(e-i-1)}.\label{eq:key0}
\end{align}
Let $\Delta$ be the subgroup $Sym_m(\mathfrak{o}/\mathfrak{p}^{2i+1})$ consisting of all the diagonal matrices.
For $\lambda\in\mathfrak{o}^m$ and $D\in\Delta,$ one has
\[
\pi\left(\mathbf{u}^\sharp\left(\frac{4D}{\boldsymbol\delta\varpi^{2i+1}}\right)\right)h_\lambda^{(e-i)}
=\sum_{\begin{smallmatrix}\tau\equiv\lambda\,\mathrm{mod}\,(\mathfrak{p}^i)^m\\ \tau\,\mathrm{mod}\,(\mathfrak{p}^{i+1})^m\end{smallmatrix}}\psi\left(\frac{^t\tau D\tau}{\boldsymbol\delta\varpi^{2i+1}}\right)h_\tau^{(e-i-1)}.
\] 
(Note that the formula above does not depend on the choice of $D$ modulo $\mathfrak{p}^{2i+1},$ so the action is well-defined.)
For arbitrary $\tau\equiv\lambda$ mod $(\mathfrak{p}^i)^m,$ by Schur orthogonality relation for finite groups and the restriction of $i,$ it is easy to see that
\[
\sum_{D\in\Delta}\psi\left(\frac{^t\lambda D\lambda}{\boldsymbol\delta\varpi^{2i+1}}-\frac{^t\tau D\tau}{\boldsymbol\delta\varpi^{2i+1}}\right)=
\begin{cases}
q^{m(2i+1)}\quad&\mbox{if }\tau\equiv\lambda\mbox{ mod }(\mathfrak{p}^{i+1})^m,\\
0&\mbox{otherwise.}
\end{cases}
\]
Hence we get
\begin{equation}\label{eq:key1}
h_\kappa^{(e-i-1)}=q^{-m(2i+1)}\sum_{D\in\Delta}\psi\left(-\frac{^t\kappa D\kappa}{\boldsymbol\delta\varpi^{2i+1}}\right)\pi\left(\mathbf{u}^\sharp\left(\frac{4D}{\boldsymbol\delta\varpi^{2i+1}}\right)\right)h_\kappa^{(e-i)}.
\end{equation}
Thus
\begin{equation}\label{eq:key2}
\pi(\mathbf{u}^\flat(\boldsymbol\delta S))h_\kappa^{(e-i-1)}=q^{-m(2i+1)}\sum_{D\in\Delta}\psi\left(-\frac{^t\kappa D\kappa}{\boldsymbol\delta\varpi^{2i+1}}\right)\pi\left(\mathbf{u}^\sharp\left(\frac{4D}{\boldsymbol\delta\varpi^{2i+1}}\right)\boldsymbol\gamma_D\right)h_\kappa^{(e-i)}
\end{equation}
where
\[
\boldsymbol\gamma_D=\mathbf{u}^\sharp\left(-\frac{4D}{\boldsymbol\delta\varpi^{2i+1}}\right)\mathbf{u}^\flat(\boldsymbol\delta S)\mathbf{u}^\sharp\left(\frac{4D}{\boldsymbol\delta\varpi^{2i+1}}\right)\]
lies in $\widetilde{\Gamma^{(e-i)}}.$
If we denote the diagonal entries of $D$ by $d_1,\dots,d_m$ and let $\boldsymbol\gamma$ act on $\Phi_\kappa^{(e-i)},$ we get
\begin{align*}
&\Omega_\psi^{(e-i)}(\boldsymbol\gamma_D)\Phi_\kappa^{(e-i)}\\
=&\omega_\psi\left(\mathbf{u}^\sharp\left(-\frac{4D}{\boldsymbol\delta\varpi^{2i+1}}\right)\mathbf{u}^\flat(\boldsymbol\delta S)\mathbf{u}^\sharp\left(\frac{4D}{\boldsymbol\delta\varpi^{2i+1}}\right)\right)\Phi_\kappa^{(e-i)}\\
=&\sum_{\begin{smallmatrix}\lambda\equiv\kappa\,\mathrm{mod}\,(\mathfrak{p}^i)^m\\ \lambda\,\mathrm{mod}\,(\mathfrak{p}^{i+1})^m\end{smallmatrix}}\psi\left(\frac{^t\lambda D\lambda}{\boldsymbol\delta\varpi^{2i+1}}\right)\omega_\psi\left(\mathbf{u}^\sharp\left(-\frac{4D}{\boldsymbol\delta\varpi^{2i+1}}\right)\mathbf{u}^\flat(\boldsymbol\delta S)\right)\Phi_\lambda^{(e-i-1)}\\
=&\epsilon_Sq^{-m(i+1)}\sum_{\begin{smallmatrix}\lambda\equiv\kappa\,\mathrm{mod}\,(\mathfrak{p}^i)^m\\ \lambda\,\mathrm{mod}\,(\mathfrak{p}^{i+1})^m\\\mu,\nu\in\mathfrak{o}^m/(\mathfrak{p}^{i+1})^m\end{smallmatrix}}
\psi\left(\frac{^t\lambda D\lambda}{\boldsymbol\delta\varpi^{2i+1}}+\frac{^t\nu\lambda}{\boldsymbol\delta\varpi^{i+1}}-\frac{^t\nu S\nu}{\boldsymbol\delta\varpi^{2(i+1)}}-\frac{^t\nu\mu}{\boldsymbol\delta\varpi^{i+1}}\right)\\
&\times\omega_\psi\left(\mathbf{u}^\sharp\left(-\frac{4D}{\boldsymbol\delta\varpi^{2i+1}}\right)\right)\Phi_\mu^{(e-i-1)}\\
=&\epsilon_Sq^{-m(i+1)}\sum_{\begin{smallmatrix}\lambda\equiv\kappa\,\mathrm{mod}\,(\mathfrak{p}^i)^m\\ \lambda\,\mathrm{mod}\,(\mathfrak{p}^{i+1})^m\\\mu,\nu\in\mathfrak{o}^m/(\mathfrak{p}^{i+1})^m\end{smallmatrix}}
\psi\left(\frac{^t\lambda D\lambda}{\boldsymbol\delta\varpi^{2i+1}}+\frac{^t\nu\lambda}{\boldsymbol\delta\varpi^{i+1}}-\frac{^t\nu S\nu}{\boldsymbol\delta\varpi^{2(i+1)}}-\frac{^t\nu\mu}{\boldsymbol\delta\varpi^{i+1}}-\frac{^t\mu D\mu}{\boldsymbol\delta\varpi^{2i+1}}\right)\\
&\times\Phi_\mu^{(e-i-1)}
\end{align*}
Here for any $\nu\in\mathfrak{o}^m,$ we have
\begin{align*}
&\sum_{\begin{smallmatrix}\lambda\equiv\kappa\,\mathrm{mod}\,(\mathfrak{p}^i)^m\\ \lambda\,\mathrm{mod}\,(\mathfrak{p}^{i+1})^m\end{smallmatrix}}\psi\left(\frac{^t\lambda D\lambda}{\boldsymbol\delta\varpi^{2i+1}}+\frac{^t\nu\lambda}{\boldsymbol\delta\varpi^{i+1}}\right)\\
=&\psi\left(\frac{^t\kappa D\kappa}{\boldsymbol\delta\varpi^{2i+1}}+\frac{^t\nu\kappa}{\boldsymbol\delta\varpi^{i+1}}\right)\sum_{\begin{smallmatrix}\beta\in\mathfrak{o}^m/(\mathfrak{p}^1)^m\end{smallmatrix}}\psi\left(\frac{^t\beta D\beta}{\boldsymbol\delta\varpi}+\frac{^t\nu\beta}{\boldsymbol\delta\varpi}\right).\\
\end{align*}
Notice that $\beta\rightarrow\psi\left((^t\beta D\beta+^t\!v\beta)/(\boldsymbol\delta\varpi)\right)$ forms a character of $\beta\in\mathfrak{o}^m/(\mathfrak{p}^1)^m$.
We write $\nu\sim D$ if the character is trivial.
Them the equations above become
\begin{align*}
&\Omega_\psi^{(e-i)}(\boldsymbol\gamma_D)\Phi_\kappa^{(e-i)}\\
=&\epsilon_Sq^{-mi}\sum_{\begin{smallmatrix}\mu\in\mathfrak{o}^m/(\mathfrak{p}^{i+1})^m\\\nu\sim D\\\nu\,\mathrm{mod}\,(\mathfrak{p}^{i+1})^m\end{smallmatrix}}
\psi\left(\frac{^t\kappa D\kappa}{\boldsymbol\delta\varpi^{2i+1}}+\frac{^t\nu\kappa}{\boldsymbol\delta\varpi^{i+1}}-\frac{^t\nu S\nu}{\boldsymbol\delta\varpi^{2(i+1)}}-\frac{^t\nu\mu}{\boldsymbol\delta\varpi^{i+1}}-\frac{^t\mu D\mu}{\boldsymbol\delta\varpi^{2i+1}}\right)\\
&\times\Phi_\mu^{(e-i-1)}\\
=&\epsilon_Sq^{-mi}\sum_{\begin{smallmatrix}\mu\in\mathfrak{o}^m/(\mathfrak{p}^i)^m\\\nu\sim D\\\nu\,\mathrm{mod}\,(\mathfrak{p}^{i+1})^m\end{smallmatrix}}
\psi\left(\frac{^t\kappa D\kappa}{\boldsymbol\delta\varpi^{2i+1}}+\frac{^t\nu\kappa}{\boldsymbol\delta\varpi^{i+1}}-\frac{^t\nu S\nu}{\boldsymbol\delta\varpi^{2(i+1)}}-\frac{^t\nu\mu}{\boldsymbol\delta\varpi^{i+1}}-\frac{^t\mu D\mu}{\boldsymbol\delta\varpi^{2i+1}}\right)\\
&\times\Phi_\mu^{(e-i)}.
\end{align*}
Now applying this formula and the assumption of the induction back to equation (\ref{eq:key2}), we get
\begin{align*}
&\pi(\mathbf{u}^\flat(\boldsymbol\delta S))h_\kappa^{(e-i-1)}\\
=&\epsilon_Sq^{-m(3i+1)}\sum_{\begin{smallmatrix}\mu\in\mathfrak{o}^m/(\mathfrak{p}^i)^m\\D\in\Delta\end{smallmatrix}}\sum_{\begin{smallmatrix}\nu\sim D\\\nu\,\mathrm{mod}\,(\mathfrak{p}^{i+1})^m\end{smallmatrix}}\\
&\times\psi\left(\frac{^t\nu\kappa}{\boldsymbol\delta\varpi^{i+1}}-\frac{^t\nu S\nu}{\boldsymbol\delta\varpi^{2(i+1)}}-\frac{^t\nu\mu}{\boldsymbol\delta\varpi^{i+1}}-\frac{^t\mu D\mu}{\boldsymbol\delta\varpi^{2i+1}}\right)
\pi\left(\mathbf{u}^\sharp\left(\frac{4D}{\boldsymbol\delta\varpi^{2i+1}}\right)\right)h_\mu^{(e-i)}\\
=&\epsilon_Sq^{-m(3i+1)}\sum_{\begin{smallmatrix}\mu\in\mathfrak{o}^m/(\mathfrak{p}^i)^m\\D\in\Delta\end{smallmatrix}}\sum_{\begin{smallmatrix}\nu\sim D\\\nu\,\mathrm{mod}\,(\mathfrak{p}^{i+1})^m\end{smallmatrix}}\sum_{\begin{smallmatrix}\tau\equiv\mu\,\mathrm{mod}\,(\mathfrak{p}^i)^m\\ \tau\,\mathrm{mod}\,(\mathfrak{p}^{i+1})^m\end{smallmatrix}}\\
&\times\psi\left(\frac{^t\nu\kappa}{\boldsymbol\delta\varpi^{i+1}}-\frac{^t\nu S\nu}{\boldsymbol\delta\varpi^{2(i+1)}}-\frac{^t\nu\mu}{\boldsymbol\delta\varpi^{i+1}}-\frac{^t\mu D\mu}{\boldsymbol\delta\varpi^{2i+1}}+\frac{^t\tau D\tau}{\boldsymbol\delta\varpi^{2i+1}}\right)h_\tau^{(e-i-1)}\\
=&\epsilon_Sq^{-m(3i+1)}\sum_{\begin{smallmatrix}\mu\in\mathfrak{o}^m/(\mathfrak{p}^i)^m\\D\in\Delta\end{smallmatrix}}\sum_{\begin{smallmatrix}\nu\sim D\\\nu\,\mathrm{mod}\,(\mathfrak{p}^{i+1})^m\end{smallmatrix}}\sum_{\lambda\in\mathfrak{o}^m/(\mathfrak{p}^1)^m}\\
&\times\psi\left(\frac{^t\nu\kappa}{\boldsymbol\delta\varpi^{i+1}}-\frac{^t\nu S\nu}{\boldsymbol\delta\varpi^{2(i+1)}}-\frac{^t\nu\mu}{\boldsymbol\delta\varpi^{i+1}}+\frac{^t\lambda D\lambda}{\boldsymbol\delta\varpi}\right)h_{\mu+\varpi^i\lambda}^{(e-i-1)}.
\end{align*}
When $D$ runs over all the elements in $\Delta,$ one easily check that $\psi(^t\lambda D\lambda/(\boldsymbol\delta\varpi))$ give exactly all the $q^m$ distinct characters of $\lambda\in\mathfrak{o}^m/(\mathfrak{p}^1)^m$ for $q^{2i}$ times.
Hence we can associate one $\eta\in\mathfrak{o}^m/(\mathfrak{p}^1)^m$ to every $D\in\Delta$ such that $\psi(^t\lambda D\lambda/(\boldsymbol\delta\varpi))=\psi(^t\lambda\eta/(\boldsymbol\delta\varpi))$ for any $\lambda\in\mathfrak{o}^m/(\mathfrak{p}^1)^m$.
In this case, $\nu\sim D$ means that $-\nu\equiv\eta$ mod $(\mathfrak{p}^1)^m$.
So
\begin{align*}
&\pi(\mathbf{u}^\flat(\boldsymbol\delta S))h_\kappa^{(e-i-1)}\\
=&\epsilon_Sq^{-m(i+1)}\sum_{\begin{smallmatrix}\mu\in\mathfrak{o}^m/(\mathfrak{p}^i)^m\\\eta,\lambda\in\mathfrak{o}^m/(\mathfrak{p}^1)^m\end{smallmatrix}}\sum_{\theta\in\mathfrak{o}^m/(\mathfrak{p}^i)^m}\\
&\times\psi\left(\frac{^t(-\eta+\varpi\theta)\kappa}{\boldsymbol\delta\varpi^{i+1}}-\frac{^t(-\eta+\varpi\theta)S(-\eta+\varpi\theta)}{\boldsymbol\delta\varpi^{2(i+1)}}-\frac{^t(-\eta+\varpi\theta)\mu}{\boldsymbol\delta\varpi^{i+1}}+\frac{^t\lambda\eta}{\boldsymbol\delta\varpi}\right)h_{\mu+\varpi^i\lambda}^{(e-i-1)}\\
=&\epsilon_Sq^{-m(i+1)}\sum_{\begin{smallmatrix}\mu\in\mathfrak{o}^m/(\mathfrak{p}^i)^m\\\ \lambda\in\mathfrak{o}^m/(\mathfrak{p}^1)^m\end{smallmatrix}}\sum_{\nu\in\mathfrak{o}^m/(\mathfrak{p}^{i+1})^m}
\psi\left(\frac{^t\nu\kappa}{\boldsymbol\delta\varpi^{i+1}}-\frac{^t\nu S\nu}{\boldsymbol\delta\varpi^{2(i+1)}}-\frac{^t\nu\mu}{\boldsymbol\delta\varpi^{i+1}}-\frac{^t\lambda\nu}{\boldsymbol\delta\varpi}\right)h_{\mu+\varpi^i\lambda}^{(e-i-1)}\\
=&\epsilon_Sq^{-m(i+1)}\sum_{\tau,\nu\in\mathfrak{o}^m/(\mathfrak{p}^{i+1})^m}
\psi\left(\frac{^t\nu\kappa}{\boldsymbol\delta\varpi^{i+1}}-\frac{^t\nu S\nu}{\boldsymbol\delta\varpi^{2(i+1)}}-\frac{^t\nu\tau}{\boldsymbol\delta\varpi^{i+1}}\right)h_\tau^{(e-i-1)},
\end{align*}
which is equation (\ref{eq:key0}).
Thus it follows that $h_\lambda^{(e-i-1)}\mapsto\Phi_\psi^{(e-i-1)}$ gives an intertwining map between $(\pi|_{\widetilde{\Gamma^{(e-i-1)}}},\oplus_\lambda\mathbb{C}\cdot h_\lambda^{(e-i-1)})$ and $(\Omega_\psi^{(e-i-1)},\mathbb{S}^{(e-i-1)})$.
By the induction, what we wanted to show is proved.
\end{proof}


\section{The idempotents $e^K$ and $E^K$}
\label{The idempotents $e^K$ and $E^K$}
In this section $F$ is set to be non-archimedean and the same notations in Section \ref{Weil Representation} will be used.
Let $\varepsilon$ be the character of $\widetilde{\Gamma_0(4)}$ given by Lemma \ref{defofthechar}.
\begin{definition}\label{defofheckealgebra}
The Hecke algebra $\widetilde{\mathcal{H}}=\widetilde{\mathcal{H}}(\widetilde{\Gamma_0(4)}\backslash\widetilde{Sp_m(F)}/\widetilde{\Gamma_0(4)};\varepsilon)$ is the space consisting of all compactly supported genuine function $\vartheta$ on $\widetilde{Sp_m(F)}$ such $\vartheta(\gamma_1 g\gamma_2)=\varepsilon(\gamma_1\gamma_2)\vartheta(g)$ for any $\gamma_1$ and $\gamma_2$ in $\widetilde{\Gamma_0(4)}$.
The multiplication among $\widetilde{\mathcal{H}}$ is defined by
\[
(\vartheta_1\ast\vartheta_2)(g)=\int_{\widetilde{Sp_m(F)}/\{\pm1\}}\vartheta_1(gh^{-1})\vartheta_2(h)dh.
\]
Here the Haar measure $dh$ on $\widetilde{Sp_m(F)}$ is normalized so that the volume of $\widetilde{\Gamma}/\{\pm1\}$ is $1$.
\end{definition}
Now we introduce two matrix coefficients $e^K$ and $E^K$ on $\widetilde{\Gamma_0(1)}$ and $\mathbf{w}_{2\boldsymbol\delta I_m}\widetilde{\Gamma_0(1)}\mathbf{w}_{2\boldsymbol\delta I_m}^{-1}$ which are actually idempotents in $\widetilde{\mathcal{H}}$.
They play important roles in our paper.
\par
Recall that $\Gamma=\Gamma_0(1)$.
\begin{definition}
\label{defofeK}
The genuine function $e^K$ on $\widetilde{Sp_m(F)}$ is defined by
\[
e^K(\tilde{g})=
\begin{cases}
q^{me}(\Phi_0,\omega_\psi(\tilde{g})\Phi_0)&\mbox{if }g\in\widetilde{\Gamma},\\
0&\mbox{otherwise.}
\end{cases}
\]
Also, we put $E^K(\tilde{g})=e^K(\mathbf{w}_{2\boldsymbol\delta I_m}^{-1}\tilde{g}\mathbf{w}_{2\boldsymbol\delta I_m})$ for any $\tilde{g}\in\widetilde{Sp_m(F)}.$
\end{definition}
The support of $e^K$ is contained in $\widetilde{\Gamma}$ and the support of $E^K$ is contained in
\begin{align*}
&\mathbf{w}_{2\boldsymbol\delta I_m}\widetilde{\Gamma}\mathbf{w}_{2\boldsymbol\delta I_m}^{-1}\\
=&\left\{
\begin{pmatrix}a&b\\c&d\end{pmatrix}\in Sp_m(F)\bigg|
a,c\in M_m(\mathfrak{0}), b\in M_m(4^{-1}\mathfrak{d}^{-1}), c\in M_m(4\mathfrak{d})\right\}.
\end{align*}
Note that for $\tilde{g}\in\mathbf{w}_{2\boldsymbol\delta I_m}\widetilde{\Gamma}\mathbf{w}_{2\boldsymbol\delta I_m}^{-1},$ we have
\begin{align*}
E^K(\tilde{g})=&e^K(\mathbf{w}_{2\boldsymbol\delta I_m}^{-1}\tilde{g}\mathbf{w}_{2\boldsymbol\delta I_m})\\
=&q^{me}(\Phi_0,\omega_\psi(\mathbf{w}_{2\boldsymbol\delta I_m}^{-1}\tilde{g}\mathbf{w}_{2\boldsymbol\delta I_m})\Phi)\\
=&q^{me}(\omega_\psi(\mathbf{w}_{2\boldsymbol\delta I_m})\Phi_0,\omega_\psi(\tilde{g}\mathbf{w}_{2\boldsymbol\delta I_m})\Phi)\\
=&q^{me}(\Phi_0,\omega_\psi(\tilde{g})\Phi_0)
\end{align*}
by the unitarity of $\omega_\psi$ and the equation
\[
\omega_\psi(\mathbf{w}_{2\boldsymbol\delta I_m})\Phi_0=\frac{\alpha_\psi(1)^{1-m}}{\alpha_\psi((2\boldsymbol\delta)^m)}\epsilon(\mathbf{w}_{I_m}\mathbf{m}(2\boldsymbol\delta I_m))\Phi_0.
\]
Thus we see that both $e^K$ and $E^K$ are in $\widetilde{H}$.
The idempotence for $e^K$ and $E^K$ easily follow from Schur's orthogonality relation.
Obviously, we have $e^K, E^K\in\widetilde{\mathcal{H}}$.


\section{The archimedean case}
\label{The archimedean case}
We let $F=\mathbb{R}$ and $\psi(x)=\mathbf{e}(x)$.
The Weil constant $\alpha_\psi(x)$ is given by
\[
\alpha_\psi(x)=
\begin{cases}
\exp(\pi\sqrt{-1}/4)&\mbox{if }x>0\\
\exp(-\pi\sqrt{-1}/4)&\mbox{if }x<0.
\end{cases}
\]
\par
The real metaplectic group $\widetilde{Sp_m(R)}$ is the unique non-trivial topological double covering of $\widetilde{Sp_m(R)}$ with the multiplication defined in Section \ref{Weil Representation}.
It is known that there exists a unique factor of automorphy $\tilde{j}:\widetilde{Sp_m(R)}\times\mathfrak{h}_m\rightarrow\mathbb{C}$ for half-integral weight such that
\[
\tilde{j}\left(\left[\begin{pmatrix}
a&b\\c&d
\end{pmatrix},\xi\right],\tau\right)^2=\det(c\tau+d).
\]


\section{Automorphic forms on $\widetilde{Sp_m(\mathbb{A})}$}
\label{Automorphic forms on Sp_m(A)}
In this section, we let $F$ be a totally real field with degree $n>1$ over $\mathbb{Q}$.
The notations $\mathfrak{o},$ $\mathfrak{d}$ and the $\mathbb{A}=\mathbb{A}_F$ stand for the integer ring, the different and the adele ring of $F,$ respectively.
We want to take a brief look at the definition of the automorphic forms on $\widetilde{Sp_m(\mathbb{A})}$.
\par
As in Section \ref{Introduction}, the $n$ real embeddings of $F$ are denoted by $\infty_1,\dots,\infty_n$.
We let $\psi_1=\prod_{v\leq\infty}\psi_{1,v}$ be the non-trivial additive character of $\mathbb{A}/F$ such that $\psi_{1,\infty_j}(x)=\mathbf{e}(x)$ for all real places $\infty_j$.
So for any finite place $v,$ the index of $\psi_{1,v},$ which we denote by $c_{1,v},$ is the exponent of the corresponding prime ideal $\mathfrak{p}_v$ in the prime decomposition of $\mathfrak{d}$. 
For the sake of simplicity, from now, when the local case with respect to some place $v$ is being considered, we use the same notations given in Section \ref{Weil Representation} with a lower subscript $v$.
\par
We should give the definition of the global metaplectic group $\widetilde{Sp_m(\mathbb{A})}$.
If $v$ is a finite place of $F$ which is not even, there is a canonical splitting over $\Gamma_v$ in $\widetilde{\Gamma_v}$ where $\Gamma_v$ is defined by (\ref{defofgamma}).
The image of the canonical splitting is also denoted by $\Gamma_v$.
It is the stabilizer for $\Phi_{0,v}$ for almost all $v$.
The global metaplectic covering of $Sp_m(\mathbb{A}),$ which we denote by $\widetilde{Sp_m(\mathbb{A})},$ is the restricted direct product of $\widetilde{Sp_m(F_v)}$ with respect to $\{\Gamma_v\}$ divided by $\{(\xi_v)\in\prod_v\{\pm1\}\,|\,\prod_v\xi_v=1\}$.
Then the Weil representation $\omega_{\psi_1}$ of $\widetilde{Sp_m(\mathbb{A})}$ on the Schwartz space $\mathbb{S}(\mathbb{A}^m)$ is well-defined.
The group $Sp_m(F)$ can be embedded canonically into $\widetilde{Sp_m(\mathbb{A})},$ so we consider $Sp_m(F)$ as a subgroup of $\widetilde{Sp_m(\mathbb{A})}$ through this embedding.
As in the local case, for any $B\in\Sym_m(\mathbb{A})$ and $A\in GL_m(\mathbb{A}),$ we let
\begin{align*}
&\mathbf{u}^\sharp(B)=(\mathbf{u}^\sharp(S_v))_v,&\mathbf{u}^\flat(B)=(\mathbf{u}^\flat(S_v))_v,\\
&\mathbf{m}(A)=(\mathbf{m}(A_v))_v,&\mathbf{w}_A=(\mathbf{w}_{A_v})_v.
\end{align*}
If $S$ is a subset of $Sp_m(\mathbb{A}),$ we let $\widetilde{S}$ denote its inverse image in $\widetilde{Sp_m(\mathbb{A})}$.
\par
Let the group $\{\pm\mathbf{1}\}$ of order $2$ be the kernel of the canonical mapping $\widetilde{Sp_m(\mathbb{A})}\rightarrow Sp_m(\mathbb{A})$ where $\mathbf{1}$ is the identity element in $\widetilde{Sp_m(\mathbb{A})}$.
A function $f$ on $\widetilde{Sp_m(\mathbb{A})}$ is called a genuine function if $f(-\mathbf{1}\cdot\mathbf{g})=-f(\mathbf{g})$ for any $\mathbf{g}\in\widetilde{Sp_m(\mathbb{A})}$.
Let $(f_v)_{v\leq\infty}$ be a family of local genuine functions.
If $f_v(\mathbf{g}_v)=1$ for $g_v\in\Gamma_v$ for almost all finite non-even places $v,$ then the product $\prod_{v\leq\infty} f_v$ defined by $(\prod_{v\leq\infty}f_v)(\mathbf{g})=\prod_{v\leq\infty} f_v(\mathbf{g}_v)$ for if $\mathbf{g}=\prod_v\mathbf{g}_v$ ($\mathbf{g}_v\in\widetilde{Sp_m(F_v)}$) gives a genuine function on $\widetilde{Sp_m(\mathbb{A})}$.
Note that the decomposition $\mathbf{g}=\prod_v\mathbf{g}_v$ for $\mathbf{g}\in\widetilde{Sp_m(\mathbb{A})}$ is not unique.
But the function $\prod_v f_v$ is still well-defined.
\par
Let $\Gamma_f'$ be a compact open subgroup subgroup of $Sp_m(\mathbb{A}_f)$ and $\varepsilon'=\prod_{v<\infty}\varepsilon'_v:\widetilde{\Gamma_f'}\rightarrow\mathbb{C}^\times$ be a genuine character.
Fix an $n$-tuple $k=(k_i)_{i=1}^n\in\mathbb{Z}_{>1}^n$ of integers greater than $1$ and put $\Gamma'=Sp_m(F)\cap(\Gamma_f'\times Sp_m(\mathbb{R})^n)$.
We define a factor of automorphy $j_{\varepsilon'}^{k+1/2}(\gamma,z)$ for $\gamma\in\Gamma'$ and $z\in\mathfrak{h}_m^n$ by
\begin{equation}\label{defoftheautomorphicfactor}
j_{\varepsilon'}^{k+1/2}(\gamma,z)=\prod_{v<\infty}\varepsilon'_v([\gamma_v,1])\prod_{i=1}^n\tilde{j}([\iota_i(\gamma),1],z_i)^{2k_i+1}.
\end{equation}
With this factor of automorphy, we denote $M_{k+1/2}(\Gamma',\varepsilon')$ and $S_{k+1/2}(\Gamma',\varepsilon')$ the spaces of Hilbert-Siegel modular forms and cusp forms for $\Gamma'$ of weight $k+1/2$ with respect to the factor of automorphy $j_{\varepsilon'}^{k+1/2}(\gamma,z)$.
Thus if $h\in M_{k+1/2}(\Gamma',\varepsilon'),$ we have
\[
h(\gamma(z))=j_{\varepsilon'}^{k+1/2}(\gamma,z)h(z)
\]
for any $\gamma\in\Gamma'$ and $z\in\mathfrak{h}_m^n$.
We can associate $h$ to an automorphic form on $\widetilde{Sp_m(\mathbb{A})}$ which is genuine and left-invariant with respect to $Sp_m(F)$.
For any $\boldsymbol{g}\in\widetilde{Sp_m(\mathbb{A})},$ by the strong approximation theorem, 
there exist $\gamma\in Sp_m(F),$ $g_\infty\in\widetilde{Sp_m(\mathbb{R})^n}$ and $g_f\in\widetilde{\Gamma_f'}$ such that $\boldsymbol{g}=\gamma g_\infty g_h$.
Then we put
\[
\varphi_h(\boldsymbol{g})=h(g_\infty(\boldsymbol{i}))\varepsilon'(g_f)^{-1}\prod_{i=1}^n\tilde{j}(g_{\infty_i},\boldsymbol{i})^{-2k_i-1}
\]
where $\boldsymbol{i}=\sqrt{-1}(I_m,\dots,I_m)\in\mathfrak{h}_m^n$.
It is easy to see that $\varphi_h$ is well-defined and thus forms a genuine automorphic form on $Sp_m(F)\backslash\widetilde{Sp_m(\mathbb{A})}$.
\par
Using the notations above, we put
\[
\mathcal{A}_{k+1/2}(Sp_m(F)\backslash\widetilde{Sp_m(\mathbb{A})};\widetilde{\Gamma_f'},\varepsilon')=\{\varphi_h\,|\,h\in M_{k+1/2}(\Gamma',\varepsilon')\}
\]
and
\[
\mathcal{A}^\mathrm{CUSP}_{k+1/2}(Sp_m(F)\backslash\widetilde{Sp_m(\mathbb{A})};\widetilde{\Gamma_f'},\varepsilon')=\{\varphi_h\,|\,h\in S_{k+1/2}(\Gamma',\varepsilon')\}.
\]
Let $\varphi\in\mathcal{A}_{k+1/2}(Sp_m(F)\backslash\widetilde{Sp_m(\mathbb{A})};\widetilde{\Gamma_f'},\varepsilon')$.
For $z\in\mathfrak{h}_m^n,$ we can take some $g_\infty\in\widetilde{Sp_m(\mathbb{R})^n}$ such that $g_\infty(\boldsymbol{i})=z$.
If we set
\[
h_\varphi(z)=\varphi(g_\infty)\prod_{i=1}^n\tilde{j}(g_{\infty_i},\boldsymbol{i})^{2k_i+1},
\]
then $h_\varphi\in M_{k+1/2}(\Gamma',\varepsilon')$ and $h_{\varphi_{h'}}=h'$ for all $h'\in M_{k+1/2}(\Gamma',\varepsilon')$.
Hence we get an one-to-one correspondence between the two spaces $M_{k+1/2}(\Gamma',\varepsilon')$ and $\mathcal{A}_{k+1/2}(Sp_m(F)\backslash\widetilde{Sp_m(\mathbb{A})};\widetilde{\Gamma_f'},\varepsilon')$.
\par
We let
\[
\mathcal{A}_{k+1/2}(Sp_m(F)\backslash\widetilde{Sp_m(\mathbb{A})})=\bigcup_{(\Gamma'_f,\varepsilon)}\mathcal{A}_{k+1/2}(Sp_m(F)\backslash\widetilde{Sp_m(\mathbb{A})};\widetilde{\Gamma_f'},\varepsilon')
\]
and
\[
\mathcal{A}^\mathrm{CUSP}_{k+1/2}(Sp_m(F)\backslash\widetilde{Sp_m(\mathbb{A})})=\bigcup_{(\Gamma'_f,\varepsilon)}\mathcal{A}^\mathrm{CUSP}_{k+1/2}(Sp_m(F)\backslash\widetilde{Sp_m(\mathbb{A})};\widetilde{\Gamma_f'},\varepsilon')
\]
where in the unions $(\Gamma'_f,\varepsilon)$ runs over all pairs of compact open subgroups $\Gamma_f'$ of $Sp_m(\mathbb{A}_f)$ and genuine characters $\varepsilon'$ of $\Gamma_f'$.
The group $\widetilde{Sp_m(\mathbb{A}_f)}$ act on $\mathcal{A}_{k+1/2}(Sp_m(F)\backslash\widetilde{Sp_m(\mathbb{A})})$ and $\mathcal{A}^\mathrm{CUSP}_{k+1/2}(Sp_m(F)\backslash\widetilde{Sp_m(\mathbb{A})})$ by the right translation $\rho$.
These give corresponding actions of $\widetilde{Sp_m(\mathbb{A}_f)}$ on $\bigcup_{(\Gamma'_f,\varepsilon)}M_{k+1/2}(\Gamma',\varepsilon')$ and $\bigcup_{(\Gamma'_f,\varepsilon)}S_{k+1/2}(\Gamma',\varepsilon'),$ which we still denote by $\rho$.
Take a function $h$ in some $M_{k+1/2}(\Gamma',\varepsilon')$ with Fourier expansion $h(z)=\sum_{T\in\Sym_m(F)}c(T)\mathbf{e}(\Tr_{F/\mathbb{Q}}(\tr(Tz)))$.
Then for any $S\in\Sym_m(F_v)$ where $v$ is a finite place of $F,$ one can check that
\begin{equation}\label{formulaofrhousharp}
\rho(\mathbf{u}^\sharp(S))h(z)=\sum_{T\in\Sym_m(F)}c(T)\psi_{1,v}(\tr(TS))\mathbf{e}(\Tr_{F/\mathbb{Q}}(\tr(Tz)))
\end{equation}
Also, for any $A\in GL_m(F)$ with finite part $A_f$ and totally positive determinant $\det(A)\in F$, one can check that
\begin{equation}\label{formulaofmA}
\rho(\mathbf{m}(A_f))h(z)=\det(A)^{-k-1/2}h(A^{-1}z\cdot^t\!A^{-1})
\end{equation}
where $\det(A)^{-k-1/2}=\prod_j\det(A_{\infty_j})^{-k_j-1/2}$.
\par
Let $\eta\in\mathfrak{o}^\times$ be an unit such that $N_{F/\mathbb{Q}}(\eta)^m=(-1)^{m\sum_jk_j}$ and put $\psi(x)=\psi_1(\eta x)$ for any $x\in\mathbb{A}$.
By Lemma \ref{defofthechar}, there exists a genuine character $\varepsilon_v$ of $\widetilde{\Gamma_0(4)_v}$ constructed from the Weil representation $\omega_{\psi_v}$ for any finite place $v$ of $F$.
Put $\varepsilon=\prod_{v<\infty}\varepsilon_v,$ which is a character of $\widetilde{\Gamma_0(4)}_f=\widetilde{\prod_{v<\infty}'\Gamma_0(4)_v}$.
We can get a factor of automorphy $j_{\varepsilon}^{k+1/2}$ of half-integral weight from $\varepsilon$ and $\Gamma_0(4)_f$ by (\ref{defoftheautomorphicfactor}).
Note that $j_{\varepsilon}^{k+1/2}$ depends on the choice of $\eta\in\mathfrak{o}^\times$ and if $\eta$ does not satisfy the condition $N_{F/\mathbb{Q}}(\eta)^m=(-1)^{m\sum_jk_j},$ then one can check that $j_{\varepsilon}^{k+1/2}(\mathbf{m}(-I_m),z)$ is identically $-1$ so $M_{k+1/2}(\Gamma_0(4),\varepsilon)$ turns out to be the zero space.
If the components $k_j$ of $k$ are all congruent to each others modulo $2$ and $\eta=(-1)^{k_j}$ with an arbitrarily chosen $k_j$, it is known that the corresponding $j^{k+1/2}$ is the same with $J^{k+1/2}$ defined in (\ref{defoftheautomorphicfactorparallel}).
From now on, we consider the general case given in this paragraph.
We rewrite $j_{\varepsilon}^{k+1/2}$ by $J^{k+1/2}$ and put $M_{k+1/2}(\Gamma_0(4))=M_{k+1/2}(\Gamma_0(4)_f,\varepsilon)$ and $S_{k+1/2}(\Gamma_0(4))=S_{k+1/2}(\Gamma_0(4)_f,\varepsilon)$.
\par
For any finite place $v$ of $F,$ let $\widetilde{\mathcal{H}_v}=\widetilde{\mathcal{H}_v}(\widetilde{\Gamma_0(4)_v}\backslash\widetilde{Sp_m(F_v)}/\widetilde{\Gamma_0(4)_v};\varepsilon_v)$ denote the Hecke algebra with respect to $v$ as in Definition \ref{defofheckealgebra}.
Put $\widetilde{\mathcal{H}}=\otimes'_{v<\infty}\widetilde{\mathcal{H}_v}$ to be the restricted product of $\widetilde{\mathcal{H}_v}$ with respect to $\{\varepsilon_v\}_{v<\infty}$ where we set $\varepsilon_v=0$ outside $\widetilde{\Gamma_0(4)_v}$.
The Hecke algebra $\widetilde{\mathcal{H}}$ acts on $\mathcal{A}_{k+1/2}(Sp_m(F)\backslash\widetilde{Sp_m(\mathbb{A})})$ by
\[
\rho(\vartheta)\varphi(g)=\int_{\widetilde{Sp_m(F)}/\{\pm1\}}\vartheta(h)\varphi(gh)dh
\]
for $\vartheta\in\widetilde{\mathcal{H}}$ and $\vartheta\in\mathcal{A}_{k+1/2}(S_m(F)\backslash\widetilde{Sp_m(\mathbb{A})})$.
\par
Next, we let $e^K_v$ and $E^K_v$ in $\widetilde{\mathcal{H}_v}$ be the ones defined in Definition \ref{defofeK} for all finite place $v$.
Then we have that both $e^K=\prod_{v<\infty}e^K_v$ and $E^K=\prod_{v<\infty}E^K_v$ lie in $\widetilde{\mathcal{H}}$.
Let $\mathcal{A}_{k+1/2}(Sp_m(F)\backslash\widetilde{Sp_m(\mathbb{A})})^{E^K}$ and $\mathcal{A}^\mathrm{CUSP}_{k+1/2}(Sp_m(F)\backslash\widetilde{Sp_m(\mathbb{A})})^{E^K}$ be the subspaces fixed by $E^K,$ that is,
\[
\mathcal{A}_{k+1/2}(Sp_m(F)\backslash\widetilde{Sp_m(\mathbb{A})})^{E^K}=\{\varphi\in\mathcal{A}_{k+1/2}(Sp_m(F)\backslash\widetilde{Sp_m(\mathbb{A})})\,|\,\rho(E^K)\varphi=\varphi\}
\]
and
\[
\mathcal{A}^\mathrm{CUSP}_{k+1/2}(Sp_m(F)\backslash\widetilde{Sp_m(\mathbb{A})})^{E^K}=\{\varphi\in\mathcal{A}^\mathrm{CUSP}_{k+1/2}(Sp_m(F)\backslash\widetilde{Sp_m(\mathbb{A})})\,|\,\rho(E^K)\varphi=\varphi\}.
\]
Also, we give the corresponding fixed subspaces in $M_{k+1/2}(\Gamma_0(4))$ and $S_{k+1/2}(\Gamma_0(4))$.
\begin{definition}\label{defofE^Kfixedspace}
The subspaces of $M_{k+1/2}(\Gamma_0(4))$ and $S_{k+1/2}(\Gamma_0(4))$ corresponding to $\mathcal{A}_{k+1/2}(Sp_m(F)\backslash\widetilde{Sp_m(\mathbb{A})})^{E^K}$ and $\mathcal{A}^\mathrm{CUSP}_{k+1/2}(Sp_m(F)\backslash\widetilde{Sp_m(\mathbb{A})})^{E^K},$ respectively, are denoted by $M_{k+1/2}(\Gamma_0(4))^{E^K}$ and $S_{k+1/2}(\Gamma_0(4))^{E^K}$.
\end{definition}
We will give the definition of the plus spaces and show that they are exactly the $E^K$-fixed spaces in the last definition in the next section.


\section{The Kohnen plus space}
\label{The Kohnen plus space}
In this section we define of the plus spaces for the Hilbert-Siegel modular forms of half-integral weight and give our first main theorem.
The use of the notations in the last section will be continued.
\begin{definition}
For any symmetric matrix $T\in\Sym_m(F),$ we denote $T\equiv\square$ mod $4$ if there exists some vector $\lambda\in\mathfrak{o}^m$ such that $T-\lambda\cdot{^t\!\lambda}\in4L_m^*$ where $L_m^*\subset\Sym_m(F)$ consists of all $m\times m$ half-integral matrices in $\Sym_m(F)$.
\end{definition}
If $T\equiv\square$ mod $4,$ then apparently $T\in L_m^*$ and the corresponding $\lambda\in\mathfrak{o}^m$ is uniquely determined modulo $2\mathfrak{o}^m$.
\par
For any modular form $h\in M_{k+1/2}(\Gamma_0(4)),$ it can be written in the Fourier expansion
\[
h(z)=\sum_{\begin{smallmatrix}T\in L_m^*\\T>0\end{smallmatrix}}c(T)\mathbf{e}(\Tr_{F/\mathbb{Q}}(\tr(Tz))).
\]
Here $\Tr_{F/\mathbb{Q}}$ and $tr$ are the traces of $F/\mathbb{Q}$ and matrices, respectively, and $T>0$ means that the image of $T$ under every real embedding in $M_m(\mathbb{R})$ is positive semi-definite.
This follows from K\"{o}cher's principle.
\par
From now on, for simplicity, when the variable $z\in\mathfrak{h}_m^n$ is being considered, we let $q^T=\mathbf{e}(\Tr_{F/\mathbb{Q}}(\tr(Tz)))$.
\begin{definition}\label{defofplusspace}
The Kohnen plus space $M_{k+1/2}^+(\Gamma_0(4))$ and $S_{k+1/2}^+(\Gamma_0(4))$ are defined by
\begin{align*}
&M_{k+1/2}^+(\Gamma_0(4))\\
=&\left\{h(z)=\sum_Tc(T)q^T\in M_{k+1/2}(\Gamma_0(4))\,\bigg|\,c(T)=0\mbox{ unless }\eta^{-1}T\equiv\square\mbox{ mod }4\right\}
\end{align*}
and
\[
S_{k+1/2}^+(\Gamma_0(4))=M_{k+1/2}^+(\Gamma_0(4))\cap S_{k+1/2}(\Gamma_0(4)).
\]
\end{definition}
We shall show that the Kohnen plus spaces are actually the spaces fixed by $E^K$.
The following proposition and its proof are analogues of Proposition 13.4 and its proof in \cite{HiraIke:13}, respectively.

\begin{proposition}
\label{plustoE^K}
We have $M_{k+1/2}(\Gamma_0(4))^{E^K}\subset M_{k+1/2}^+(\Gamma_0(4))$.
\end{proposition}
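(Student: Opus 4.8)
The plan is to convert the eigenvalue identity $\rho(E^K)h=h$ into a relation among the Fourier coefficients $c(T)$ and to exhibit the local factor that forces $c(T)$ to vanish off the square classes. I would first use that $E^K$ is supported on $\widetilde{\Gamma'_f}$ and that, as computed just after Definition~\ref{defofeK}, on that support its value is the Weil matrix coefficient $E^K(\tilde g)=q^{me}(\Phi_0,\omega_\psi(\tilde g)\Phi_0)$ (a product over the finite places). Because $\varphi_h$ is right $\widetilde{\Gamma_0(4)_f}$-equivariant under $\varepsilon$ and $E^K$ is $\varepsilon$-equivariant on both sides, the integrand in $\rho(E^K)h=\int E^K(g)\rho(g)h\,dg$ is right $\Gamma_0(4)_f$-invariant, so the integral collapses to a finite sum over $\Gamma_0(4)_f\backslash\Gamma'_f$. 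Since $\Gamma'$ and $\Gamma_0(4)$ differ only in the upper-right block ($M_m((4\mathfrak d)^{-1})$ versus $M_m(\mathfrak d^{-1})$), these cosets are represented by the upper unipotents $\mathbf u^\sharp(B)$ with $B$ running over the finite group $\Sym_m((4\mathfrak d)^{-1})/\Sym_m(\mathfrak d^{-1})$, which is supported at the even places since at an odd place $4$ is a unit and $E^K_v$ is the identity of the Hecke algebra.

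As every representative is upper unipotent, $\rho$ acts diagonally on the $q$-expansion by (\ref{formulaofrhousharp}), and therefore $\rho(E^K)h=\sum_T c(T)\mu(T)q^T$ with
\[
\mu(T)=C\sum_{B}\Bigl(\int_{\mathfrak o^m}\psi(-{}^t\lambda B\lambda)\,d\lambda\Bigr)\psi_{1,f}(\tr(TB))
\]
for a positive constant $C$, where I use $(\Phi_0,\omega_\psi(\mathbf u^\sharp(B))\Phi_0)=\int_{\mathfrak o^m}\psi(-{}^t\lambda B\lambda)\,d\lambda$ and write $\psi_{1,f}=\prod_{v<\infty}\psi_{1,v}$. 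Writing $\psi=\psi_1(\eta\,\cdot)$ and $\eta\,{}^t\lambda B\lambda=\tr(\eta\lambda{}^t\lambda B)$ and interchanging the finite sum with the integral gives
\[
\mu(T)=C\int_{\mathfrak o^m}\Bigl(\sum_{B}\psi_{1,f}\bigl(\tr((T-\eta\lambda{}^t\lambda)B)\bigr)\Bigr)\,d\lambda.
\]

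The decisive input is orthogonality of characters on $\Sym_m((4\mathfrak d)^{-1})/\Sym_m(\mathfrak d^{-1})$: with $S=T-\eta\lambda{}^t\lambda$, the inner sum equals the order of the group when $B\mapsto\psi_{1,f}(\tr(SB))$ is trivial and is $0$ otherwise. Since $\psi_{1,v}$ is trivial exactly on $\mathfrak d_v^{-1}$, testing against diagonal and off-diagonal $B$ shows that triviality holds iff the diagonal entries of $S$ lie in $4\mathfrak o$ and the off-diagonal entries in $2\mathfrak o$, that is, iff $S\in 4L_m^*$; at odd places this is automatic because $4$ is a unit and $T,\lambda{}^t\lambda\in L_m^*$. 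Hence $\mu(T)$ is a positive multiple of the measure of $\{\lambda\in\mathfrak o^m:T-\eta\lambda{}^t\lambda\in 4L_m^*\}$, which is nonzero iff $\eta^{-1}T\equiv\lambda\,{}^t\lambda\bmod 4L_m^*$ for some $\lambda$, i.e. iff $\eta^{-1}T\equiv\square\bmod 4$. Plugging $\mu(T)=0$ off the square classes into $c(T)=\mu(T)c(T)$ forces $c(T)=0$ there, which is exactly $h\in M_{k+1/2}^+(\Gamma_0(4))$.

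I expect the bookkeeping to be the main obstacle rather than any single estimate: justifying the collapse of the Hecke integral with the correct measure normalization, pinning down the coset representatives $\mathbf u^\sharp(B)$ by an Iwasawa-type argument using $a,d\in M_m(\mathfrak o)$ and $c\in M_m(4\mathfrak d)$, and tracking the local conductors so that orthogonality lands precisely on $4L_m^*$ and not on a mis-scaled lattice. Once $\mu(T)$ is in the displayed integral form, the vanishing off the square classes is immediate; note that for this inclusion I do not even need the idempotence of $E^K$ (which would give $\mu(T)\in\{0,1\}$), since $\rho(E^K)h=h$ together with $\mu(T)=0$ off the square classes already suffices.
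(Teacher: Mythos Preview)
There is a genuine gap in the coset decomposition. You claim that the left cosets of $\Gamma_0(4)_f$ in $\Gamma'_f$ are exhausted by the upper unipotents $\mathbf{u}^\sharp(B)$, arguing that the two groups ``differ only in the upper-right block.'' But the block conditions are necessary, not sufficient, and the symplectic relation does not force $d$ to be a unit: for $g=\left(\begin{smallmatrix}a&b\\c&d\end{smallmatrix}\right)\in\Gamma'_v$ one has $c\in M_m(4\mathfrak d_v)$ and $b\in M_m((4\mathfrak d_v)^{-1})$, so ${}^tcb\in M_m(\mathfrak o_v)$ and the relation ${}^tad=I+{}^tcb$ gives no information on $\det d$ modulo $\mathfrak p_v$. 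Concretely, at any dyadic place $\mathbf{w}_{4\boldsymbol\delta I_m}=\left(\begin{smallmatrix}0&-(4\boldsymbol\delta)^{-1}I_m\\4\boldsymbol\delta I_m&0\end{smallmatrix}\right)$ lies in $\Gamma'_v$ (indeed $\mathbf{w}_{2\boldsymbol\delta I_m}^{-1}\mathbf{w}_{4\boldsymbol\delta I_m}\mathbf{w}_{2\boldsymbol\delta I_m}=\mathbf{w}_{\boldsymbol\delta I_m}\in\Gamma_v$), yet $\mathbf{u}^\sharp(-B)\mathbf{w}_{4\boldsymbol\delta I_m}$ has upper-right block $-(4\boldsymbol\delta)^{-1}I_m\notin M_m(\mathfrak d_v^{-1})$ for every $B$, so this element sits in no coset of the form $\mathbf{u}^\sharp(B)\Gamma_0(4)_v$. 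In fact $\mathbf{w}_{2\boldsymbol\delta I_m}$ normalizes $\Gamma_0(4)_v$, so $[\Gamma'_v:\Gamma_0(4)_v]=[\Gamma_v:\Gamma_0(4)_v]$, which already for $m=1$, $F_v=\mathbb Q_2$ equals $6$, strictly larger than the $4$ unipotent cosets you produce. The missing cosets contain the Weyl element and contribute nontrivially to $\rho(E^K)h$; your displayed formula for $\mu(T)$ therefore omits terms, and the orthogonality argument does not apply as written (one can also see this numerically: with the correct measure normalization your truncated sum gives $\mu(T)=2/3$, not $1$, on the square classes in the $m=1$, $\mathbb Q_2$ case).

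The paper avoids any explicit coset enumeration by a representation-theoretic detour: it passes to $h_0=\mathrm{const}\cdot\rho(\mathbf{w}_{-2\boldsymbol\delta I_m})h$, which is fixed by $e^K$, and then invokes the irreducibility of $\Omega_\psi$ (Proposition~\ref{irrofOmega}) to produce an intertwining map $\Phi_\lambda\mapsto h_\lambda$ from $\mathbb S(2^{-1}\hat{\mathfrak o}^m/\hat{\mathfrak o}^m)$ into the $\widetilde{\Gamma_f}$-span of $h_0$. The upper-unipotent eigenrelation $\rho(\mathbf{u}^\sharp(\boldsymbol\delta^{-1}S))h_\lambda=\psi({}^t\lambda S\lambda/(4\boldsymbol\delta))h_\lambda$ then pins down the Fourier support of each $h_\lambda$, and the identity $\sum_\lambda h_\lambda(z)=h(z/4)$ (coming from $\Omega_\psi(\mathbf{w}_{\boldsymbol\delta I_m})\Phi_0=\mathrm{const}\cdot\sum_\lambda\Phi_\lambda$) recovers the Fourier support of $h$. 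Your direct-computation strategy could in principle be repaired by supplying the full coset system and evaluating $E^K$ and $\rho$ on the Weyl-type representatives, but that is substantially more work than the intertwining argument.
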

\begin{proof}
Put $\hat{\mathfrak{o}}=\prod_{v<\infty}\mathfrak{o}_v$ and $\hat{\mathfrak{d}}=\mathfrak{d}\hat{\mathfrak{o}}$.
For each $v<\infty,$ we pick a certain fixed generator $\boldsymbol\delta_v\in\mathfrak{o}_v$ of $\mathfrak{d}_v$. 
Then $\boldsymbol\delta=(\boldsymbol\delta_v)_{v<\infty}\in\hat{\mathfrak{d}}$.
The quotient group $2^{-1}\hat{\mathfrak{o}}/\hat{\mathfrak{o}}$ is canonically isomorphic to $2^{-1}\mathfrak{o}/\mathfrak{o}$ by Chinese remainder theorem.
Let $\Gamma_f=\prod_{v<\infty}\Gamma_v$ and $\mathbb{S}(2^{-1}\hat{\mathfrak{o}}^m/\hat{\mathfrak{o}}^m)$ be the space of Schwartz functions on $2^{-1}\hat{\mathfrak{o}}^m/\hat{\mathfrak{o}}^m$.
By Proposition \ref{invofOmega} and \ref{irrofOmega}, the space $\mathbb{S}(2^{-1}\hat{\mathfrak{o}}^m/\hat{\mathfrak{o}}^m)$ gives an irreducible space for $\widetilde{\Gamma_f}$ through the Weil representation $\Omega_{\psi}=\otimes_{v<\infty}\Omega_{\psi_v}$. 
For $\lambda\in\hat{\mathfrak{o}}^m/(2\hat{\mathfrak{o}})^m,$ denote the characteristic function of $\lambda/2+\hat{\mathfrak{o}}^m$ by $\Phi_\lambda$.
The set of all such functions forms an orthonormal basis for $\mathbb{S}(2^{-1}\hat{\mathfrak{o}}^m/\hat{\mathfrak{o}}^m)$, which has the properties
\begin{align}
\Omega_\psi(e^K)\Phi_0&=\Phi_0,\\
\Omega_\psi(\mathbf{u}^\sharp(\boldsymbol\delta^{-1}S))\Phi_\lambda&=\psi(^t\!\lambda S\lambda/(4\boldsymbol\delta))\Phi_\lambda\quad\mbox{ for }S\in\Sym_m(\hat{\mathfrak{o}}),\\
\Omega_\psi(\mathbf{w}_{\boldsymbol\delta I_m})\Phi_0&=2^{-mn/2}\zeta_{\boldsymbol\delta}\sum_{\lambda\in\mathfrak{o}^m/(2\mathfrak{o})^m}\Phi_\lambda
\end{align}
where $\zeta_{\boldsymbol\delta}=\prod_{v<\infty}\alpha_{\psi_v}(1)^{1-m}\overline{\alpha_{\psi_v}(\boldsymbol\delta_v^m)}\epsilon_v(\mathbf{w}_{I_m}\mathbf{m}(\boldsymbol\delta I_m))$ is a fourth root of $1$ depending only on $\boldsymbol\delta$.
Now take some $h\in M_{k+1/2}(\Gamma_0(4))^{E^K}$.
We set
\[
h_0=2^{m\sum_jk_j}\zeta_{\boldsymbol\delta}\cdot\epsilon(\mathbf{w}_{\boldsymbol\delta I_m}\mathbf{w}_{-2\boldsymbol\delta I_m})^{-1}\rho(\mathbf{w}_{-2\boldsymbol\delta I_m})h
\]
where we set $\epsilon([g,\zeta])=\zeta$ for any $[g,\zeta]\in\widetilde{Sp_m(\mathbb{A}_f)}$.
By the definition of $E^K,$ we have $\rho(e^K)h_0=h_0$.
Let $\mathcal{V}$ be the $\mathbb{C}$-space spanned by $\{\rho(g)h_0\,|\,g\in\widetilde{\Gamma_f}\}$.
Since both $\Phi_0$ and $h_0$ are fixed by the matrix coefficient $e^K,$ there exists some intertwining map $i:\mathbb{S}(2^{-1}\hat{\mathfrak{o}}^m/\hat{\mathfrak{o}}^m)\rightarrow\mathcal{V}$ such that $i(\Phi_0)=h_0$.
For $\lambda\in\mathfrak{o}^m/(2\mathfrak{o}^m),$ denote $i(\Phi_\lambda)$ by $h_\lambda$.
Then we have
\begin{align}
\rho(e^K)h_0&=h_0,\\
\rho(\mathbf{u}^\sharp(\boldsymbol\delta^{-1}S))h_\lambda&=\psi(^t\!\lambda S\lambda/(4\boldsymbol\delta))h_\lambda\quad\mbox{ for }S\in\Sym_m(\hat{\mathfrak{o}}),\\
\rho(\mathbf{w}_{\boldsymbol\delta I_m})h_0&=2^{-mn/2}\zeta_{\boldsymbol\delta}\sum_{\lambda\in\mathfrak{o}^m/(2\mathfrak{o})^m}h_\lambda.
\end{align}
Let the Fourier expansion of $h_\lambda$ be $\sum_{T\in\Sym_m(F)}c_\lambda(T)q^{T/4},$ then for any $S\in\Sym_m(\hat{\mathfrak{o}}),$
\begin{align*}
&\sum_{T\in\Sym_m(F)}c_\lambda(T)\psi_1(\eta\cdot^t\!\lambda S\lambda/(4\boldsymbol\delta))q^{T/4}\\
=&\psi(^t\!\lambda S\lambda/(4\boldsymbol\delta))\sum_{T\in\Sym_m(F)}c_\lambda(T)q^{T/4}\\
=&\rho(\mathbf{u}^\sharp(\boldsymbol\delta^{-1}S))h_\lambda(z)\\
=&\sum_{T\in\Sym_m(F)}c_\lambda(T)\psi_1(\tr(TS)/(4\boldsymbol\delta))q^{T/4}
\end{align*}
where the latter equation is from (\ref{formulaofrhousharp}).
Thus we get that $c_\lambda(T)$ vanishes unless $\eta^{-1}T-\lambda\cdot^t\!\lambda\in4L_m^*$.
But
\begin{align*}
&\sum_{\lambda\in\mathfrak{o}^m/(2\mathfrak{o}^m)}h_\lambda(z)\\
=&2^{mn/2}\zeta_{\boldsymbol\delta}^{-1}\rho(\mathbf{w}_{\boldsymbol\delta I_m})h_0(z)\\
=&2^{m\sum_j(k_j+1/2)}\epsilon(\mathbf{w}_{\boldsymbol\delta I_m}\mathbf{w}_{-2\boldsymbol\delta I_m})^{-1}\rho(\mathbf{w}_{\boldsymbol\delta I_m}\mathbf{w}_{-2\boldsymbol\delta I_m})h(z)\\
=&2^{m\sum_j(k_j+1/2)}\rho(\mathbf{m}(2_fI_m))h(z)\\
=&h(z/4)
\end{align*}
by (\ref{formulaofmA}).
It follows that
\[
h(z)=\sum_{\lambda\in\mathfrak{o}^m/(2\mathfrak{o}^m)}\sum_{\eta^{-1}T-\lambda\cdot^t\!\lambda\in 4L_m^*}c_\lambda(T)q^T.
\]
Hence we have $h\in M_{k+1/2}^+(\Gamma_0(4))$.
\end{proof}

The converse of this proposition is an analogue of Proposition 13.3 in \cite{HiraIke:13}.
We introduce a different but simpler way to prove it.
Before the next proposition, let us put $\Gamma[4\mathfrak{d}^{-1},\mathfrak{d}]_f=\prod_{v<\infty}\Gamma[4\mathfrak{d}_v^{-1},\mathfrak{d}_v]$ and $\check{\varepsilon}=\prod_{v<\infty}\check{\varepsilon_v}$ where $\check{\varepsilon_v}$ is the character of $\widetilde{\Gamma[4\mathfrak{d}_v^{-1},\mathfrak{d}_v]}$ given in Lemma \ref{defofthechar'}.

\begin{proposition}
We have $M_{k+1/2}^+(\Gamma_0(4))\subset M_{k+1/2}(\Gamma_0(4))^{E^K}$.
\end{proposition}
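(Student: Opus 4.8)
The plan is to derive the inclusion from the key Lemma \ref{mainlemma}, as the discussion preceding the statement suggests. First I would rewrite $E^K$ through $e^K$: since $E^K(\tilde g)=e^K(\mathbf{w}_{2\boldsymbol\delta I_m}^{-1}\tilde g\,\mathbf{w}_{2\boldsymbol\delta I_m})$, a change of variable in the convolution defining the action gives $\rho(E^K)=\rho(\mathbf{w}_{2\boldsymbol\delta I_m})\rho(e^K)\rho(\mathbf{w}_{2\boldsymbol\delta I_m}^{-1})$, so that $\rho(E^K)h=h$ is equivalent to the $e^K$-invariance of $\rho(\mathbf{w}_{2\boldsymbol\delta I_m}^{-1})h$. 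Unlike in Proposition \ref{plustoE^K}, this invariance is here the conclusion rather than an available hypothesis, which is exactly why Lemma \ref{mainlemma} is the right instrument.

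To that end I would attach to $h=\sum_T c(T)q^T\in M_{k+1/2}^+(\Gamma_0(4))$ the family $\{h_\kappa\}_{\kappa\in\mathfrak{o}^m/(2\mathfrak{o})^m}$ obtained by collecting the coefficients with $\eta^{-1}T\equiv\kappa\cdot{}^t\!\kappa$ mod $4L_m^*$, normalized as in (\ref{eq:h_lambda}); the definition of the plus space then gives $\sum_\kappa h_\kappa=2^{m\sum_j(k_j+1/2)}\rho(\mathbf{m}(2I_m))h$ through (\ref{formulaofmA}). Next I would check the two hypotheses of Lemma \ref{mainlemma} for $\pi=\rho$ and these vectors. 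Hypothesis (a) is a Fourier-coefficient computation: writing $\eta^{-1}T=\kappa\cdot{}^t\!\kappa+4N$ with $N\in L_m^*$ and invoking (\ref{formulaofrhousharp}), the half-integrality of $N$ forces $\tr(NB)/\boldsymbol\delta\in\mathfrak{d}^{-1}$ for $B\in\Sym_m(\hat{\mathfrak{o}})$, so the corresponding factor of $\psi_1$ is trivial and only the term $\psi({}^t\!\kappa B\kappa/(4\boldsymbol\delta))$ survives, giving $\rho(\mathbf{u}^\sharp(\boldsymbol\delta^{-1}B))h_\kappa=\psi({}^t\!\kappa B\kappa/(4\boldsymbol\delta))h_\kappa$. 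Hypothesis (b) follows from the modularity of $h$: for $\gamma\in\widetilde{\Gamma[4\mathfrak{d}^{-1},\mathfrak{d}]_f}$ one conjugates by $\mathbf{m}(2I_m)$, uses $\mathbf{m}(2I_m)^{-1}\widetilde{\Gamma[4\mathfrak{d}^{-1},\mathfrak{d}]_f}\mathbf{m}(2I_m)=\widetilde{\Gamma_0(4)_f}$ together with the transformation law $\rho(\gamma')h=\varepsilon(\gamma')^{-1}h$, and rewrites $\varepsilon$ as $\check\varepsilon$ via (\ref{relabettwochars}); this yields $\rho(\gamma)\sum_\kappa h_\kappa=\check\varepsilon(\gamma)^{-1}\sum_\kappa h_\kappa$.

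With (a) and (b) verified, Lemma \ref{mainlemma}—applied at each finite place and tensored, the global Weil representation $\Omega_\psi$ on $\mathbb{S}(2^{-1}\hat{\mathfrak{o}}^m/\hat{\mathfrak{o}}^m)$ being irreducible by Propositions \ref{invofOmega} and \ref{irrofOmega}—shows that $\bigoplus_\kappa\mathbb{C}\cdot h_\kappa$ carries a representation of $\widetilde{\Gamma_f}$ equivalent to $\Omega_\psi$, under which $\Phi_\kappa$ corresponds to $h_\kappa$; the correspondence is nondegenerate because the $\kappa=0$ member $h_0$ is nonzero. Every identity valid in $\Omega_\psi$ then transfers. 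In particular $\Omega_\psi(e^K)\Phi_0=\Phi_0$ yields $\rho(e^K)h_0=h_0$, while $\Omega_\psi(\mathbf{w}_{\boldsymbol\delta I_m})\Phi_0=2^{-mn/2}\zeta_{\boldsymbol\delta}\sum_\lambda\Phi_\lambda$ together with $\sum_\kappa h_\kappa\propto\rho(\mathbf{m}(2I_m))h$ shows $\rho(\mathbf{w}_{\boldsymbol\delta I_m})h_0$ to be a nonzero multiple of $\rho(\mathbf{m}(2I_m))h$; since $\mathbf{w}_{\boldsymbol\delta I_m}^{-1}\mathbf{m}(2I_m)=\mathbf{w}_{2\boldsymbol\delta I_m}^{-1}$, this identifies $h_0$ as a nonzero scalar times $\rho(\mathbf{w}_{2\boldsymbol\delta I_m}^{-1})h$. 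Therefore $\rho(e^K)$ fixes $\rho(\mathbf{w}_{2\boldsymbol\delta I_m}^{-1})h$, and by the first paragraph $\rho(E^K)h=h$, i.e. $h\in M_{k+1/2}(\Gamma_0(4))^{E^K}$.

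The main obstacle I anticipate is the bookkeeping of the metaplectic scalars—the fourth root $\zeta_{\boldsymbol\delta}$, the powers of $2$, and the cocycle signs $\epsilon(\cdot)$—when matching $h_0$ against $\rho(\mathbf{w}_{2\boldsymbol\delta I_m}^{-1})h$, since the matrix identity $\mathbf{w}_{\boldsymbol\delta I_m}^{-1}\mathbf{m}(2I_m)=\mathbf{w}_{2\boldsymbol\delta I_m}^{-1}$ holds only up to a sign in $\widetilde{Sp_m}$; however, these scalars cancel because $e^K$-invariance is insensitive to rescaling. A secondary point that must be handled with care is the passage from the local Lemma \ref{mainlemma} to the global intertwiner, which I would justify through the factorization $\mathbb{S}(2^{-1}\hat{\mathfrak{o}}^m/\hat{\mathfrak{o}}^m)=\bigotimes_{v<\infty}\mathbb{S}^{(0)}_v$ and Schur's lemma at each place.
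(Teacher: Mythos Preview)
Your proposal is correct and follows essentially the same route as the paper: define the $h_\lambda$, verify the two hypotheses of Lemma~\ref{mainlemma} via (\ref{formulaofrhousharp}) and (\ref{relabettwochars}), transfer $\Omega_\psi(e^K)\Phi_0=\Phi_0$ to $\rho(e^K)h_0=h_0$, and then unwind the relation between $h_0$ and $h$ through $\mathbf{w}_{\boldsymbol\delta I_m}$ and $\mathbf{m}(2I_m)$ to conclude $\rho(E^K)h=h$. One small remark: your aside that ``$h_0$ is nonzero'' is neither needed nor a priori justified---Lemma~\ref{mainlemma} already asserts that $h_\kappa\mapsto\Phi_\kappa$ intertwines, so the identities $\rho(e^K)h_0=h_0$ and $\rho(\mathbf{w}_{\boldsymbol\delta I_m})h_0=2^{-mn/2}\zeta_{\boldsymbol\delta}\sum_\lambda h_\lambda$ transfer regardless, and the latter in fact shows $h_0\ne0$ a posteriori whenever $h\ne0$.
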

\begin{proof}
Fix one $h(z)=\sum_{\begin{small}\eta^{-1}T\equiv\square\,\mathrm{mod}\,4\end{small}}c(T)q^T\in M_{k+1/2}^+(\Gamma_0(4))$.
We can write $h$ in the form $h(z)=\sum_{\lambda\in\mathfrak{o}^m/(2\mathfrak{o}^m)}h_\lambda(4z)$ where
\begin{equation}\label{defofh_lambda}
h_\lambda(z)=\sum_{\eta^{-1}T-^t\!\lambda\cdot\lambda\in4L_m^*}c(T)q^{T/4}.
\end{equation}
Let $\mathcal{V}$ be the $\mathbb{C}$-space spanned by $\{\rho(g)h_\lambda\,|\,g\in\widetilde{Sp_m(\mathbb{A}_f)}, \lambda\in\mathfrak{o}^m/(2\mathfrak{o}^m)\}$.
So $\mathcal{V}$ forms an invariant space of $\widetilde{Sp_m(\mathbb{A}_f)}$ by $\rho$.
Note that
\[
\rho(\mathbf{u}^\sharp(\boldsymbol\delta^{-1}S))h_\lambda=\psi(^t\!\lambda S\lambda/(4\boldsymbol\delta))h_\lambda
\]
for any $S\in\Sym_m(\hat{\mathfrak{o}})$ and $\lambda\in\mathfrak{o}^m/(2\mathfrak{o}^m)$ by (\ref{formulaofrhousharp}).
Also, for any $\boldsymbol\gamma\in\widetilde{\Gamma[4\mathfrak{d}^{-1},\mathfrak{d}]_f},$ we have
\begin{align*}
&\rho(\boldsymbol\gamma)\sum_{\lambda\in\mathfrak{o}^m/(2\mathfrak{o}^m)}h_\lambda\\\
=&2^{m\sum_j(k_j+1/2)}\rho({\boldsymbol\gamma}\mathbf{m}(2_fI_m))h(z)\\
=&2^{m\sum_j(k_j+1/2)}\varepsilon(\mathbf{m}(2_fI_m)^{-1}\boldsymbol\gamma\mathbf{m}(2_fI_m))^{-1}\rho(\mathbf{m}(2_fI_m))h(z)\\
=&\check{\varepsilon}(\boldsymbol\gamma)^{-1}\sum_{\lambda\in\mathfrak{o}^m/(2\mathfrak{o}^m)}h_\lambda
\end{align*}
by (\ref{relabettwochars}).
Now Lemma \ref{mainlemma} tells us that $(\rho|_{\widetilde{\Gamma_f}},\oplus_\lambda\mathbb{C}\cdot h_\lambda)$ is isomorphic to $(\Omega_\psi,\mathbb{S}(2^{-1}\hat{\mathfrak{o}}^m/\hat{\mathfrak{o}}^m))$ as representations of $\widetilde{\Gamma_f}$ under the intertwining map $h_\lambda\rightarrow\Phi_\lambda$.
This gives us that $\rho(e^K)h_0=h_0$.
Now since
\begin{align*}
&\rho(\mathbf{w}_{2\boldsymbol\delta I_m}^{-1})h_0\\
=&\zeta'\rho(\mathbf{m}(2_f^{-1}I_m)\mathbf{w}_{-\boldsymbol\delta I_m})h_0\\
=&2^{-mn/2}\zeta_{-\boldsymbol\delta}\zeta'\rho(\mathbf{m}(2_f^{-1}I_m))\sum_{\lambda\in\mathfrak{o}^m/(2\mathfrak{o})^m}h_\lambda\\
=&2^{\sum_jk_j}\zeta_{-\boldsymbol\delta}\zeta'\cdot h
\end{align*}
where
\[
\zeta'=\epsilon(\mathbf{w}_{2\boldsymbol\delta I_m}\mathbf{w}_{-2\boldsymbol\delta I_m})\epsilon(\mathbf{m}(2_f^{-1})\mathbf{w}_{-\boldsymbol\delta I_m})
\]
and
\[\zeta_{-\boldsymbol\delta}=\prod_{v<\infty}\alpha_{\psi_v}(1)^{1-m}\overline{\alpha_{\psi_v}((-\boldsymbol\delta_v)^m)}\epsilon_v(\mathbf{w}_{I_m}\mathbf{m}(-\boldsymbol\delta I_m)).
\]
So we get that $\rho(E^K)h=h$ by the definition of $E^K$.
\end{proof}

Our first main theorem follows from the two propositions.

\begin{theorem}\label{mainthm1}
The idempotent Hecke operator $E^K$ on $M_{k+1/2}(\Gamma_0(4))$ and $S_{k+1/2}(\Gamma_0(4))$ is just the projection to the plus spaces.
That is, we have
\[
M_{k+1/2}^+(\Gamma_0(4))=M_{k+1/2}(\Gamma_0(4))^{E^K}
\]
and
\[
S_{k+1/2}^+(\Gamma_0(4))=S_{k+1/2}(\Gamma_0(4))^{E^K}.
\]
\end{theorem}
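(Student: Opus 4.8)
The plan is to deduce the theorem formally from the two preceding propositions, since all the substantive work has already been done there. For the first equality, Proposition \ref{plustoE^K} gives the inclusion $M_{k+1/2}(\Gamma_0(4))^{E^K}\subseteq M_{k+1/2}^+(\Gamma_0(4))$, and the subsequent proposition gives the reverse inclusion $M_{k+1/2}^+(\Gamma_0(4))\subseteq M_{k+1/2}(\Gamma_0(4))^{E^K}$. Combining the two inclusions yields
\[
M_{k+1/2}^+(\Gamma_0(4))=M_{k+1/2}(\Gamma_0(4))^{E^K}.
\]

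For the cusp form statement I would reduce to the case just settled, using the definition $S_{k+1/2}^+(\Gamma_0(4))=M_{k+1/2}^+(\Gamma_0(4))\cap S_{k+1/2}(\Gamma_0(4))$ together with the invariance of $S_{k+1/2}(\Gamma_0(4))$ under $E^K$ recorded in the introduction. Because $\rho(E^K)$ maps the cusp form space into itself, a form $h\in S_{k+1/2}(\Gamma_0(4))$ satisfies $\rho(E^K)h=h$ as an element of $S_{k+1/2}(\Gamma_0(4))$ precisely when it does so as an element of the ambient space $M_{k+1/2}(\Gamma_0(4))$. Hence
\[
S_{k+1/2}(\Gamma_0(4))^{E^K}=S_{k+1/2}(\Gamma_0(4))\cap M_{k+1/2}(\Gamma_0(4))^{E^K}.
\]
Substituting the first equality into the right-hand side replaces $M_{k+1/2}(\Gamma_0(4))^{E^K}$ by $M_{k+1/2}^+(\Gamma_0(4))$, and the resulting intersection $S_{k+1/2}(\Gamma_0(4))\cap M_{k+1/2}^+(\Gamma_0(4))$ is by definition $S_{k+1/2}^+(\Gamma_0(4))$.

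Since the two inclusions controlling the Fourier coefficients, and the representation-theoretic identification via Lemma \ref{mainlemma}, have already been established, there is no genuine obstacle remaining; the theorem is a purely formal consequence. The only point requiring a moment's care is confirming that $E^K$ really preserves the cusp form subspace, so that passing to $E^K$-fixed vectors commutes with intersecting against $S_{k+1/2}(\Gamma_0(4))$. This is exactly the invariance already noted, so nothing beyond the two propositions is needed.
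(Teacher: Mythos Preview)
Your proposal is correct and matches the paper's approach: the paper simply states that the theorem follows from the two preceding propositions, and you have spelled out exactly that formal deduction, including the routine cusp-form case via intersection with $S_{k+1/2}(\Gamma_0(4))$.
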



\section{Relations to the Jacobi forms}
\label{Relation to the Jacobi forms}
In this section, we shall construct an isomorphism between the plus space and the space of Jacobi forms for certain restricted weights $k$.
But before that, let us give a brief introduction of the Jacobi forms.
For more detail, one can consult \cite{Ikeda:94} and \cite{RoRa:98}.
We use the same notations in the last section and assume $m\sum_jk_j\equiv mn$ mod $2$ and $\eta=-1$. 
\par
Let $G_{m+1}^J(F)$ be the subgroup of $Sp_{m+1}(F)$ consisting of all matrices whose first column is $^t\!(1,0,\dots,0)$.
If we embed $Sp_m(F)$ to $Sp_{m+1}(F)$ by
\[
\begin{pmatrix}
\Huge{A}&B\\C&D
\end{pmatrix}\mapsto
\left(
\begin{array}{c|c}
\begin{matrix}
1&\\&A
\end{matrix}&
\begin{matrix}
&\\&B
\end{matrix}\\
\hline
\begin{matrix}
&\\&C
\end{matrix}&
\begin{matrix}
1&\\&D
\end{matrix}
\end{array}\right)
\]
and define the Heisenberg group $H_m(F)$ by
\[
H_m(F)=\left\{
(X,Y,\kappa)=
\left(
\begin{array}{c|c}
\begin{matrix}
1&^t\!X\\&I_m
\end{matrix}&
\begin{matrix}
\kappa&^t\!Y\\Y&
\end{matrix}\\
\hline
\begin{matrix}
&\\&
\end{matrix}&
\begin{matrix}
1&\\-X&I_m
\end{matrix}
\end{array}\right)\in Sp_m(F)\,\bigg|\,
X,Y\in F^m, \kappa\in F
\right\},
\]
then it is easy to verify that $G_{m+1}^J(F)$ is the semi-direct product $Sp_m(F)\ltimes H_m(F)$.
The action of $G_{m+1}^J(F)$ on $\mathfrak{h}_m^n\times(\mathbb{C}^m)^n$ is given by
\[
\begin{pmatrix}a&b\\c&d\end{pmatrix}(X,Y,\kappa)(z,w)=((Az+B)(Cz+D)^{-1},^t\!(Cz+D)^{-1}(w+zX+Y))
\]
and it is transitive.
On the other hand, the adelic Heisenberg group $H_m(\mathbb{A})$ acts on the Schwartz space $\mathbb{S}(\mathbb{A}^m)$ by the Schr\"{o}dinger representation
\[
\pi_S(X,Y,\kappa)f(T)=\psi_1(\kappa+^t\!(2T+X)Y)f(T+X)
\]
where $\psi_1$ is the character on $\mathbb{A}/F$ defined in Section \ref{Automorphic forms on Sp_m(A)}.
Let $\widetilde{G_{m+1}^J(\mathbb{A})}=\widetilde{Sp_m(\mathbb{A})}\ltimes H_m(\mathbb{A})$ be the metaplectic double covering of the adelic Jacobi group $G_{m+1}^J(\mathbb{A})$.
The group $G_{m+1}^J(F)$ can be embedded into $\widetilde{G_{m+1}^J(\mathbb{A})}$.
Combining the Schr\"{o}dinger representation and the Weil representation of $\widetilde{Sp_m(\mathbb{A})},$ one can get the Schr\"{o}dinge-Weil representation $\pi_{SW}$ of $\widetilde{G_{m+1}^J(\mathbb{A})}$ on $\mathbb{S}(\mathbb{A}^m)$.
Now for any $\Phi\in\mathbb{S}(\mathbb{A}^m),$ the theta function $\Theta_\Phi$ associate to it is defined by
\[
\Theta_\Phi(\mathbf{g})=\sum_{\xi\in F^m}(\pi_{SW}(\mathbf{g})\Phi)(\xi)
\]
for any $\mathbf{g}\in\widetilde{G_{m+1}^J(\mathbb{A})}$.
Note that $\Theta_\Phi$ is a function on $\widetilde{G_{m+1}^J(\mathbb{A})}$ left-invariant under $G_{m+1}^J(F)$.
Now let us restrict us to the condition such that
\[
\Phi=\Phi_f\Phi_\infty\in\mathbb{S}(\mathbb{A}^m)
\]
where $\Phi_f\in\mathbb{S}(\mathbb{A}_f^m)$ and $\Phi_\infty(X_\infty)=\mathbf{e}(i\Tr(^t\!X_\infty\cdot X_\infty))$ for any $X_\infty\in(\mathbb{R}^m)^n$.
The space of all the theta functions constructed from such $\Phi$ is denoted by $\mathcal{A}_\theta(G_{m+1}^J(F)\backslash\widetilde{G_{m+1}^J(\mathbb{A})})$.
The group $\widetilde{G_{m+1}^J(\mathbb{A}_f)}$ acts on $\mathcal{A}_\theta(G_{m+1}^J(F)\backslash\widetilde{G_{m+1}^J(\mathbb{A})})$ by the right translation $\rho'$, which is obviously equivalent to the finite part of the  Schr\"{o}dinger-Weil representation $\pi_{SW}$.
By applying the genuine factor of automorphy $\tilde{j}_\theta$ on $\widetilde{G_{m+1}^J(\mathbb{R})}^n\times(\mathfrak{h}_m^n\times(\mathbb{C}^m)^n)$ given by
\begin{align*}
&\tilde{j}_\theta\left(\left[\begin{pmatrix}A&B\\C&D\end{pmatrix}\right](X,Y,\kappa),(z,w)\right)\\
=&\mathbf{e}(\Tr(^t\!w'(Cz+D)^{-1}Cw'-^t\!XzX-2\cdot^t\!Xw-^t\!XY-\kappa))\\
&\times\prod_{i=1}^n\tilde{j}\left(\left[\begin{pmatrix}A_{\infty_i}&B_{\infty_i}\\C_{\infty_i}&D_{\infty_i}\end{pmatrix}\right],z_i\right)
\end{align*}
where $w'=w+Xz+Y$ and $\tilde{j}$ is the factor of automorphy for half-integral weight given in Section \ref{The archimedean case}, any theta function $\Phi$ can be associated to a function on $\mathfrak{h}_m^n\times(\mathbb{C}^m)^n$ in the similar way with Section \ref{Automorphic forms on Sp_m(A)}, which is also called a theta function.
A quick calculations shows that, if for any $\lambda\in\mathfrak{o}^m/(2\mathfrak{o}^m),$ we put
\[
f_\lambda=\Phi_\lambda\Phi_\infty\in\mathbb{S}(\mathbb{A}^m),
\]
where $\Phi_\lambda$ is the characteristic function of $\lambda/2+\hat{\mathfrak{o}}^m$ and $\Phi_\infty$ is as above, the theta function on $\mathfrak{h}_m^n\times(\mathbb{C}^m)^n$ associated to $\Theta_{f_\lambda}$ is
\begin{equation}
\label{defoftheta_lambda}
\theta_\lambda(z,w)=\sum_{p\in\mathfrak{o}^m}\mathbf{e}\left(\Tr_{F/\mathbb{Q}}\left(^t\!(p+\frac{\lambda}{2})z(p+\frac{\lambda}{2})+2\cdot^t\!(p+\frac{\lambda}{2})w)\right)\right).
\end{equation}
The representation of $\widetilde{G_{m+1}^J(\mathbb{A}_f)}$ on the space of all the theta functions on  $\mathfrak{h}_m^n\times(\mathbb{C}^m)^n$ induced from this association is also denoted by $\rho'$.
Now consider the tensor product $\mathcal{A}_{k+1/2}(Sp_m(F)\backslash\widetilde{Sp_m(\mathbb{A})})\otimes_\mathbb{C}\mathcal{A}_\theta(G_{m+1}^J(F)\backslash\widetilde{G_{m+1}^J(\mathbb{A})})$.
The representation $\rho\otimes\rho',$ where $\rho$ is the right translation of $\widetilde{Sp_m(\mathbb{A}_f)}$ on $\mathcal{A}_{k+1/2}(Sp_m(F)\backslash\widetilde{Sp_m(\mathbb{A})}),$ forms a representation of $G_{m+1}^J(\mathbb{A}_f)$ on $\mathcal{A}_{k+1/2}(Sp_m(F)\backslash\widetilde{Sp_m(\mathbb{A})})\otimes_\mathbb{C}\mathcal{A}_\theta(G_{m+1}^J(F)\backslash\widetilde{G_{m+1}^J(\mathbb{A})})$.
Every element $\varphi$ in the tensor product space can be associated to exactly one function $G_\varphi$ on $\mathfrak{h}_m^n\times(\mathbb{C}^m)^n$ which is in the tensor product of the spaces of all Hilbert-Siegel modular forms of weight $k+1/2$ and all theta functions on $\mathfrak{h}_m^n\times(\mathbb{C}^m)^n$.
The function $G_\varphi$ is called a Jacobi form of weight $k+1$ if
\[
(\rho\otimes\rho')(\boldsymbol\gamma)\varphi=\varphi\mbox{ for any }\boldsymbol\gamma\in \Gamma_{m+1}^J=G_{m+1}^J(\mathbb{A}_f)\cap\left(\prod_{v<\infty}\Gamma_{m+1,v}\right).
\]
Here $\Gamma_{m+1,v}$ is the group defined by (\ref{defofgammaloc}) with $m$ replaced by $m+1$.
Notice that 
\[
\Gamma_{m+1}^J=\Gamma_f\ltimes\left(H_m(\mathbb{A}_f)\cap\left(\prod_{v<\infty}\Gamma_{m+1,v}\right)\right)
\]
where $\Gamma_f$ is the same with which in the proof of Proposition \ref{plustoE^K}.
It is known and easy to check that this definition for Jacobi forms coincides with Definition \ref{defofjacobiforms}.
The space of all Jacobi forms and all Jacobi cusp forms of weight $k+1$ are denoted by $J_{k+1,1}$ and $J_{k,1}^\mathrm{CUSP}$, respectively.
\par
Now we are ready to show our second main result.
For any $\lambda\in\mathfrak{o}^m/(2\mathfrak{o}^m),$ let $\theta_\lambda$ be the one as in (\ref{defoftheta_lambda}).
The Heisenberg group $H_m(\mathbb{A}_f)\cap(\prod_{v<\infty}\Gamma_{m+1,v})$ leaves all the theta functions $\theta_\lambda$ fixed.
Now assume $G$ is a Jacobi form of weight $k+1$.
There exist $2^{mn}$ uniquely determined holomorphic functions $G_\lambda$ on $\mathfrak{h}_m^n$ for $\lambda\in\mathfrak{o}^m/(2\mathfrak{o}^m)$ such that
\[
G(z,w)=\sum_{\lambda\in\mathfrak{o}^m/(2\mathfrak{o}^m)}G_\lambda(z)\theta_\lambda(z,w).
\]
It is known that the function $G_\lambda$ is a Hilbert-Siegel modular form of weight $k+1/2$ for some congruence subgroup of $Sp_m(F)$ for every $\lambda$.
And $G$ is a Jacobi cusp form if and only if every $G_\lambda$ is a cusp form.
The space $\oplus_\lambda\mathbb{C}\cdot\theta_\lambda$ forms a representation of $\widetilde{\Gamma_f}$ by $\rho'|_{\widetilde{\Gamma_f}}$.
This representation is isomorphic to the Weil representation $\Omega_{\psi_1}$ under the intertwining map $\theta_\lambda\mapsto\Phi_\lambda,$ which is irreducible by Property \ref{irrofOmega}.
Now since $(\theta_\lambda)_\lambda$ form an orthonormal basis of $\oplus_\lambda\mathbb{C}\cdot\theta_\lambda$ and the Weil representation $\Omega_\psi$ is unitary, the invariance of $G$ under $\Gamma_f$ implies that $(\rho|_{\widetilde{\Gamma_f}},\oplus_\lambda\mathbb{C}\cdot h_\lambda)$ forms a genuine representation of $\widetilde{\Gamma_f}$ which is isomorphic to $\overline{\Omega_{\psi_1}}=\Omega_{\overline{\psi_1}}=\Omega_\psi$ via the intertwining map $G_\lambda\mapsto\Phi_\lambda$.
Note that the intertwining map is unique up to scalar multiplication due to the irreducibility of $\Omega_\psi$.
So by the same argument as in the proof of Proposition \ref{plustoE^K}, we get
\[
\sum_{\lambda\in\mathfrak{o}^m/(2\mathfrak{o}^m)}G_\lambda(4z)\in M_{k+1/2}^+(\Gamma_0(4)).
\]
\par
Conversely, let $h(z)=\sum_\lambda h_\lambda(z/4)\in M_{k+1/2}^+(\Gamma_0(4))$ where $h_\lambda$ is given by (\ref{defofh_lambda}).
The space $\oplus_\lambda\mathbb{C}\cdot h_\lambda(z)$ forms an irreducible representation of $\widetilde{\Gamma_f}$ by $\rho|_{\widetilde{\Gamma_f}}$ which is isomorphic to $\Omega_\psi=\Omega_{\overline{\psi_1}}=\overline{\Omega_{\psi_1}}$ via $h_\lambda\mapsto\Phi_\lambda$.
Hence $\rho'|_{\widetilde{\Gamma_f}}=\overline{\rho|_{\widetilde{\Gamma_f}}}$.
Under this condition, we have that $\sum_\lambda h_\lambda(z)\theta_\lambda(z,w)$ is invariant under $\Gamma_f$ by $(\rho\otimes\rho')|_{\Gamma_f}$ according to the basic representation theory.
Also, $\sum_\lambda h_\lambda(z)\theta_\lambda(z,w)$ is fixed by the actions of $H_m(\mathbb{A}_f)\cap(\prod_{v<\infty}\Gamma_{m+1,v})$ since so are the theta functions $\theta_\lambda$, thus forms a Jacobi form.
\par
We conclude our results in the following theorem.
\begin{theorem}
For any Jacobi form $G=\sum_\lambda G_\lambda(z)\theta_\lambda(z,w)\in J_{k+1,1},$ we have
\[
\sum_{\lambda\in\mathfrak{o}^m/(2\mathfrak{o}^m)}G_\lambda(4z)\in M_{k+1/2}^+(\Gamma_0(4)).
\]
Conversely, for any Hilbert-Siegel modular form $h=\sum_\lambda(z)h_\lambda(4z)\in M_{k+1/2}^+(\Gamma_0(4)),$ we have
\[
\sum_{\lambda\in\mathfrak{o}^m/(2\mathfrak{o}^m)}h_\lambda(z)\theta_\lambda(z,w)\in J_{k+1,1}.
\]
The associations above are inverse to each other and thus give a canonical isomorphism between $M_{k+1/2}^+(\Gamma_0(4))$ (resp. $S_{k+1/2}^+(\Gamma_0(4))$) and $J_{k+1,1}$ (resp. $J_{k,1}^\mathrm{CUSP}$).

\end{theorem}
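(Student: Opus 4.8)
The plan is to recast both directions as statements about the representation of $\widetilde{\Gamma_f}$ carried by the theta components, and to lean on the irreducibility of the finite Weil representation from Proposition \ref{irrofOmega}. Throughout I use the running hypothesis $\eta=-1$, under which $\psi=\psi_1(-\,\cdot)=\overline{\psi_1}$, so that $\overline{\Omega_{\psi_1}}=\Omega_{\overline{\psi_1}}=\Omega_\psi$; this identification is what allows the two Weil representations to be matched.

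First I would treat the map $J_{k+1,1}\to M_{k+1/2}^+(\Gamma_0(4))$. Given $G=\sum_\lambda G_\lambda(z)\theta_\lambda(z,w)$, the theta functions span a copy of $\Omega_{\psi_1}$ under $\theta_\lambda\mapsto\Phi_\lambda$, and the Heisenberg part of $\Gamma_{m+1}^J$ fixes each $\theta_\lambda$. The defining invariance $(\rho\otimes\rho')(\boldsymbol\gamma)\varphi=\varphi$ for $\boldsymbol\gamma\in\Gamma_f$, together with the unitarity and irreducibility of $\Omega_{\psi_1}$, then forces $(\rho|_{\widetilde{\Gamma_f}},\oplus_\lambda\mathbb{C}\cdot G_\lambda)$ to be the contragredient $\overline{\Omega_{\psi_1}}=\Omega_\psi$ via $G_\lambda\mapsto\Phi_\lambda$. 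At that point the $G_\lambda$ satisfy exactly the hypotheses fed into the argument of Proposition \ref{plustoE^K}, namely the $\mathbf{u}^\sharp$-eigenvalue relation and the $\mathbf{w}_{\boldsymbol\delta I_m}$-relation, so the same computation with the Fourier expansions shows that the $T$-th coefficient vanishes unless $\eta^{-1}T-\lambda\,{}^t\!\lambda\in4L_m^*$, whence $\sum_\lambda G_\lambda(4z)\in M_{k+1/2}^+(\Gamma_0(4))$.

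For the converse $M_{k+1/2}^+(\Gamma_0(4))\to J_{k+1,1}$, I would start from $h=\sum_\lambda h_\lambda(4z)$ with $h_\lambda$ as in (\ref{defofh_lambda}). Lemma \ref{mainlemma}, applied just as in the proof that the plus space is $E^K$-fixed, identifies $(\rho|_{\widetilde{\Gamma_f}},\oplus_\lambda\mathbb{C}\cdot h_\lambda)$ with $\Omega_\psi=\overline{\Omega_{\psi_1}}$ via $h_\lambda\mapsto\Phi_\lambda$. Consequently $\rho'|_{\widetilde{\Gamma_f}}=\overline{\rho|_{\widetilde{\Gamma_f}}}$ on the two matched copies of the Weil representation, so the pairing $\sum_\lambda h_\lambda(z)\theta_\lambda(z,w)$ is $\Gamma_f$-invariant; this is the elementary fact that a vector of $V\otimes\overline{V}$ assembled from dual bases is fixed by the diagonal action. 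Invariance under the Heisenberg factor $H_m(\mathbb{A}_f)\cap\prod_{v<\infty}\Gamma_{m+1,v}$ is immediate since the $\theta_\lambda$ are already fixed by it, and the holomorphy and growth conditions are inherited from $h$, so the pairing lies in $J_{k+1,1}$ by the adelic description of Jacobi forms, which coincides with Definition \ref{defofjacobiforms}.

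Finally I would verify that the two assignments are mutually inverse and respect cusp forms. Since $\Omega_\psi$ is irreducible, the intertwining map between $\{\Phi_\lambda\}$ and the theta components is unique up to a single scalar, and the rescaling $z\mapsto 4z$ that converts between the $q^{T/4}$-normalization of $h_\lambda$ and the integral normalization is the same in both directions, so composing the two maps returns the identity once the normalizing constants cancel. The cusp assertion follows because $G$ is a Jacobi cusp form exactly when every $G_\lambda$ is a cusp form, which matches the definition of $S_{k+1/2}^+(\Gamma_0(4))$ under the correspondence. The main obstacle I anticipate is bookkeeping the scalar normalizations, in particular tracking the fourth roots of unity $\zeta_{\boldsymbol\delta}$ and the powers of $2$ coming from $\mathbf{w}_{\boldsymbol\delta I_m}$ and $\mathbf{m}(2_fI_m)$, so as to be sure the two intertwiners are honest inverses and not merely inverse up to a scalar.
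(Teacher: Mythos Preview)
Your proposal is correct and follows essentially the same route as the paper: both directions are reduced to identifying $(\rho|_{\widetilde{\Gamma_f}},\oplus_\lambda\mathbb{C}\cdot G_\lambda)$ (resp.\ $\oplus_\lambda\mathbb{C}\cdot h_\lambda$) with $\Omega_\psi=\overline{\Omega_{\psi_1}}$ via $G_\lambda\mapsto\Phi_\lambda$, using unitarity and irreducibility (Proposition~\ref{irrofOmega}) in the forward direction and Lemma~\ref{mainlemma} in the converse, then invoking the argument of Proposition~\ref{plustoE^K} and the $V\otimes\overline{V}$ invariance principle respectively. Your explicit caution about the scalar bookkeeping ($\zeta_{\boldsymbol\delta}$, powers of $2$) is well placed, since the paper's own treatment of the mutual-inverse assertion is terse on exactly this point.
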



\end{document}